\documentclass[12pt, leqno]{amsart}
\usepackage{amsmath}
\usepackage{amssymb}
\usepackage{amsthm}
\usepackage{enumerate}
\usepackage[mathscr]{eucal}
\usepackage{xcolor}
\theoremstyle{plain}
\usepackage{tikz}
\usepackage{soul}
\usepackage[normalem]{ulem}
\newtheorem{theorem}{Theorem}[section]
\newtheorem{lemma}[theorem]{Lemma}
\newtheorem{prop}[theorem]{Proposition}
\theoremstyle{definition}
\newtheorem{definition}[theorem]{Definition}
\newtheorem{remark}[theorem]{Remark}

\newtheorem{question}[theorem]{Question}

\newtheorem{cor}[theorem]{Corollary}
\theoremstyle{remark}

\begin{document}
	\title[Ball Covering Property of Banach spaces]{ On the Ball Covering Property of $\mathbb{L}(\mathbb{X}, \mathbb{Y})$ }
	
	\author{Ankan Mishra}
	
    	\address[Mishra]{Department of Mathematics\\ Indian Institute of Information Technology, Raichur\\ Karnataka 584135 \\INDIA. }	\email{ankanabmishra1997@gmail.com}
    	
   	\author{Kallol Paul}
    	
    	\address[Paul]{Vice-Chancellor, Kalyani University, West Bengal 741235, India and Professor (on lien)	Department of Mathematics, Jadavpur University, Kolkata 700032, India.}	\email{kalloldada@gmail.com}
	
	\author{Debmalya Sain}
	
    	\address[Sain]{Department of Mathematics\\ Indian Institute of Information Technology, Raichur\\ Karnataka 584135 \\INDIA. }	\email{saindebmalya@gmail.com}
	
	\author{Shamim Sohel}
	
		\address[Sohel]{Department of Mathematics, Indian Institue of Technology Madras, Chennai, Tamil Nadu 600036, India. } \email{shamimsohel11@gmail.com}
	
		\thanks{The research of Mr. Ankan Mishra is supported by an institute fellowship from IIIT Raichur.}  
		
		\thanks{Dr. Debmalya Sain delightfully acknowledges the support of ANRF-ECRG for the project ``Applications of the norm attainment problem in the geometry of Banach spaces and topological vector spaces"(ANRF/ECRG/2024/000436/PMS)}
		
		\thanks{ Dr. Shamim Sohel would like to thank ANRF, Govt. of India, for the financial support in the form of National Post Doctoral Fellowship (File No: PDF/2025/ 001669) under the mentorship of Dr. Surjit Kumar. }

	\subjclass[2020]{Primary 47L05, Secondary 46B20}
	\keywords{Ball covering Property, Birkhoff-James orthogonality, bounded linear operators, strongly exposed points} 
	\begin{abstract}

We investigate the Ball Covering Property (BCP) of Banach spaces through the lens of  Birkhoff-James orthogonality, yielding a new geometric characterization of the property. Applying this framework, we characterize the BCP of the bounded linear operator space $\mathbb{L}(\mathbb{X}, \mathbb{Y})$ under specific conditions on $\mathbb{X}$ and $\mathbb{Y}$, proving that an earlier necessary condition is  sufficient.    As an application, we provide a complete affirmative answer to an open question concerning the BCP of $\mathbb{L}(L^p[0,1])$. We further apply our results to vector-valued Lipschitz spaces $\operatorname{Lip}_0(M,\mathbb{Y})$, obtaining a characterization of the BCP in this setting under the assumption of the Radon-Nikod\'{y}m property.
We also study the stability of the BCP under $p$-norm direct sums. In finite dimensions, we provide a sufficient condition for an $n$-dimensional Banach space to have a minimal covering by $n+1$  balls. Furthermore, we find an upper bound for the minimal ball covering number of $\mathbb{L}(\mathbb{X}, \mathbb{Y})$ in the finite-dimensional setting and prove that this number is exactly $mn+1$ when $\mathbb{X}$ is an $m$-dimensional strictly convex space and $\mathbb{Y}$ is an $n$-dimensional smooth space.

\end{abstract}

	\maketitle
	\section{Introduction}

    The geometry of Banach spaces continues to be an active area of research, with various geometric properties providing deep insights into the structure of normed linear spaces. Among these, the Ball Covering Property (BCP), introduced by Cheng \cite{C}, has attracted considerable attention in recent years. This property, which concerns coverings of the unit sphere by balls avoiding the origin, has revealed interesting connections with smoothness, convexity, and the overall geometry of Banach spaces. Subsequent studies have further demonstrated its relevance in Banach space geometry in general, and non-commutative spaces of operators in particular (see, for instance, \cite{CCL, CCS,CKWZ,  GLM, LLW, LZ,  Sain25, SC, SC2}). The primary objective of this article is to investigate the Ball Covering Property through the framework of Birkhoff–James orthogonality and to develop new geometric characterizations and stability results. Let us now mention the basic notations and definitions to be used in this article.\\
	
		Let $\mathbb{X}, \mathbb{Y}$ denote Banach spaces over the real field.  We use the notations $ B_\mathbb{X}$ and $S_{\mathbb{X}}$ to denote the unit ball and unit sphere of $\mathbb{X},$ respectively.  Let $\mathbb{L}(\mathbb{X}, \mathbb{Y})$ $(\mathbb{K}(\mathbb{X}, \mathbb{Y}))$ be the space of all bounded (compact) linear operators between $\mathbb{X}$ and $\mathbb{Y}.$ The space of all finite-rank operators between $\mathbb{X}$ and $\mathbb{Y}$ is denoted by $\mathbb{F}(\mathbb{X},\mathbb{Y})$. The dual space of $\mathbb{X}$ is denoted by $\mathbb{X}^*.$ For a non-zero $x \in \mathbb{X},$  $x^* \in S_{\mathbb{X}^*}$ is said to be a supporting functional at $x$ if $x^*(x)=\|x\|.$ The set of all supporting functionals at $x$ is denoted by $J(x),$ i.e., $J(x) :=\{x^* \in S_{\mathbb{X}^*}: x^*(x)=\|x\|\}.$ A non-zero  element $x \in \mathbb{X} $ is said to be smooth if $J(x)$ is a singleton. A Banach space $\mathbb{X}$ is called smooth if every $x \in S_{\mathbb{X}}$ is smooth. The set of all smooth points of $\mathbb{X}$ is denoted by $Sm(\mathbb{X}).$ We recall that the norm of $ \mathbb{X} $ is G\^ateaux differentiable at a non-zero $x \in \mathbb{X} $ if $ \lim_{t \to 0} \frac{\|x + t y\| - \|x\|}{t} $ exists for every $ y \in \mathbb{X}. $ It is well known that $ x $ is smooth if and only if the norm of $ \mathbb{X} $ is G\^ateaux differentiable at $x$ (see \cite{BS}). We also recall that the norm of $ \mathbb{X} $ is Fr\'echet differentiable at a non-zero $ x $ if $ \lim_{t \to 0} \frac{\|x + t y\| - \|x\|}{t} $ exists for every $ y \in \mathbb{X} $ and the convergence is uniform. It is clear that Fr\'echet differentiability implies G\^ateaux differentiability but the converse is true only in the finite-dimensional case. We refer to \cite{M} for further information on the topic of differentiability in Banach spaces.  An element $x \in S_{\mathbb{X}}$ is said to be an exposed point of $ B_{\mathbb{X}}$ if there exists $x^* \in J(x)$ such that $x^*(y) < 1= x^*(x),$ for any $y \in S_{\mathbb{X}} \setminus \{x\}.$ Clearly, every exposed point of $ B_{\mathbb{X}}$ is also an extreme point of $ B_{\mathbb{X}},$ but the converse is not necessarily true.  We say that $x \in S_{\mathbb{X}}$ is a strongly exposed point of $B_{\mathbb{X}}$ if there exists $x^* \in J(x)$ such that for any sequence $\{x_n\} \subset  B_{\mathbb{X}},$ $x^*(x_n) \to
		1=x^*(x)$ implies that $x_n \to x.$ In this case, we say that $x^*$ strongly exposes $x$ in $B_{\mathbb{X}}$ and $x^*$ is said to be a strongly exposing functional for $x \in B_{\mathbb{X}}$. We write $SE(B_\mathbb{X})$ for  the set of all  strongly exposing functionals of $B_{\mathbb{X}}$ and $SE(\mathbb{X}) := \{ \lambda x^*: \lambda \in \mathbb{R} \setminus\{0\}, x^* \in SE(B_{\mathbb{X}})\}.$ An element $ x^* \in S_{\mathbb{X}^*} $ is called weak*-strongly exposed point of $ B_{\mathbb{X}^*} $ if there exists an $ x \in S_{\mathbb{X}} $ such that for any sequence $ \{x_n^*\} \subset B_{\mathbb{X}^*}$ with $ x_n^*(x)\to 1 $ implies that $ x_n^* \xrightarrow{w^*} x^*.$

    We now introduce the Ball Covering Property (BCP), which is the main focus of this article. For any $ x \in \mathbb{X} $ and any $ r>0, $ $ B(x, r) $ denotes the open ball centered at $ x $ and having radius $ r. $ Similarly, $ B[x, r] $ denotes the closed ball centered at $ x $ having radius $ r. $
        
	\begin{definition}
	    A Banach space $\mathbb{X}$ is said to have the \emph{Ball Covering Property} (BCP) if its unit sphere $S_{\mathbb{X}}$ can be covered by countably many closed balls (equivalently, open balls), none of which contains the origin. In other words, there exists a sequence of balls $\{B(x_n, r_n)\}_{n=1}^\infty,$ with $r_n \leq \|x_n\|$ for each $ n\in \mathbb{N}, $ such that \[
S_{\mathbb{X}} \subseteq \bigcup_{i=1}^\infty B(x_n, r_n).
            \]
	\end{definition}	
 In case of finite-dimensional Banach spaces, one is naturally led to study the \emph{minimal ball covering number}, denoted by $\mathcal{B}_{\min}^{\#}$, which represents the smallest number of balls required to cover the unit sphere while avoiding the origin. Cheng \cite{C} established several foundational results in this direction, including the fact that for any $n$-dimensional Banach space, $n+1 \leq \mathcal{B}_{\min}^{\#} \leq 2n.$ Cheng’s work highlights the strong influence of geometric structure on ball coverings, including the role of smoothness. For an $n$-dimensional smooth Banach space, $\mathcal{B}_{\min}^{\#}= n+1,$ although the converse does not hold true in general \cite{C}. Furthermore, an $n$-dimensional Banach space is isometric to $\ell_\infty^n$ if and only if $\mathcal{B}_{\min}^{\#}= 2n. $ In the infinite-dimensional case, it is elementary to see that every separable Banach space admits the BCP. Also, it is not difficult to see that whenever a Banach space has the BCP, the centers of these balls can be chosen from any dense subset of that Banach space. Since $ Sm(\mathbb{X}) $ is dense in $ \mathbb{X} $ for every separable Banach space $\mathbb{X}, $ we can choose the center of each ball to be a smooth point, without any loss of generality. This result for separable Banach spaces was observed  by Cheng in \cite{C}. In the non-separable setting, the situation is more subtle. Examples of non-separable Banach spaces possessing the BCP include $\ell_\infty$ and $ \mathbb{L}(\mathbb{H}).$ On the other hand, the quotient spaces $\ell_\infty/ c_0$, $ \mathbb{L}(\mathbb{H}) / \mathbb{K}(\mathbb{H})$ fail to admit the BCP.  We refer the reader to \cite{BLS} for further examples and related results in this direction.  The BCP is also linked with several geometric properties like Radon-Nikod\'{y}m property \cite{CZZ}, strict convexity and dentability \cite{SC, SC2} and uniform non-squareness \cite{CCS, CLZZ}. \\

The ball covering property has been studied from several perspectives in Banach space theory. In this article, we establish a connection between the ball covering property and Birkhoff-James orthogonality, leading to a complete characterization in terms of the orthogonality relation.
We recall from  \cite{B, J} that  an element $x \in \mathbb{X}$ is said to be Birkhoff-James orthogonal to $y \in \mathbb{X}$ if 
$$ \| x + \lambda y \| \geq \|x\|, \, \, \text{for all } \lambda \in \mathbb{R}.$$ 
Symbolically, it is written as $ x \perp_B y.$ Given any $x \in \mathbb{X},$  we write $x^\perp := \{ y \in X: x \perp_B y\}.$ For real Banach spaces, this orthogonality relation is  dissected in two parts as $x^+$ and $x^-,$ in \cite{S2}. Given $x, y \in \mathbb{X},$ we say that $y \in x^+$   if $\|x+ \lambda y\| \geq \|x\|$ for any $\lambda \geq 0$ and that $y \in x^-$ if $\|x+ \lambda y\| \geq \|x\|$ for any $\lambda \leq 0.$ For any $x, y \in \mathbb{X},$ it is easy to verify that either $y \in x^+$ or $y \in x^-.$  We note that $x \perp_B y \iff y \in x^+ \cap x^-.$
The connection between these notions and the ball covering property is given by the following characterization, which will serve as the principal tool throughout the paper. It is also the first main result of the present work.

\begin{lemma}
    Let $\mathbb{X}$ be a Banach space. Then $\mathbb{X}$ has the BCP if and only if there exists a countable set $\{x_i: i \in \mathbb{N}\} \subset \mathbb{X} \setminus \{0\}$ such that $\cap_{i \in \mathbb{N}} x_i^\perp = \{0\}.$
\end{lemma}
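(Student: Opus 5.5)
We use the standard fact that $x \perp_B y$ (for $x\neq 0$) holds if and only if $x^*(y)=0$ for some $x^*\in J(x)$. We also use the following elementary observation: $0\notin B(x,r)$ exactly when $r\le \|x\|$, while a point $z$ lies in the ball $B(x,\|x\|)$ precisely when $\|x - z\| < \|x\|$. The plan is to translate a covering by balls avoiding the origin into a statement about the sets $x^-$ (equivalently $x^+$), and from there into the orthogonality statement. Both directions work with open balls of the maximal admissible radius $\|x_i\|$. Shrinking the radius to $\|x_i\|$ only enlarges the ball while keeping $0$ out of the open ball, so without loss of generality every covering ball has the form $B(x_i,\|x_i\|)$.

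\emph{Necessity.} Suppose $S_{\mathbb{X}} \subseteq \bigcup_i B(x_i,\|x_i\|)$ and fix $y\neq 0$ with $y\in \bigcap_i x_i^\perp$. Normalizing, we may take $y\in S_{\mathbb{X}}$, so $y\in B(x_i,\|x_i\|)$ for some $i$. Then $\|x_i + (-1)y\|<\|x_i\|$, which contradicts $x_i\perp_B y$. Hence $\bigcap_i x_i^\perp=\{0\}$.

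\emph{Sufficiency.} Suppose $\bigcap_i x_i^\perp=\{0\}$. Consider the countable family $\{\pm x_i\}$ together with the open balls $B(\pm n x_i, n\|x_i\|)$ for $n\in\mathbb{N}$; none of these open balls contains $0$. Take $y\in S_{\mathbb{X}}$. Some $x_i$ satisfies $x_i\not\perp_B y$, so there is $\lambda\neq 0$ with $\|x_i+\lambda y\|<\|x_i\|$. Replacing $x_i$ by $-x_i$ if necessary (note $(-x_i)^\perp=x_i^\perp$), we may assume $\lambda<0$, say $\lambda=-t$ with $t>0$.

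We now want $y$ in one of the balls, i.e. $\|s x_i - y\|<s\|x_i\|$ for some $s>0$. This is exactly $\|x_i - y/s\|<\|x_i\|$ with $1/s=t$. The step needing care is that $s=1/t$ need not be an integer. Convexity handles this: the function $\mu\mapsto \|x_i-\mu y\|-\|x_i\|$ is convex, vanishes at $0$, and is negative at $\mu=t$, so it is negative on all of $(0,t]$. Choosing $n\ge 1/t$ gives $1/n\in(0,t]$, hence $\|n x_i - y\|<n\|x_i\|$, so $y\in B(nx_i,n\|x_i\|)$. The countably many balls therefore cover $S_{\mathbb{X}}$ and avoid the origin, so $\mathbb{X}$ has the BCP.

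Passing from open to closed balls, if the definition is read that way, is routine: slightly shrink the radii, or apply the convexity argument again. The main (mild) obstacle is this scaling/convexity step, and it is resolved as described.
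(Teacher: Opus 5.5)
Your proof is correct and follows the paper's argument. For necessity you read $\|x_i-y\|<\|x_i\|$ directly off the covering; the paper routes the same observation through the sets $x_i^-$ and $x_i^+$, while you go straight to $x_i^\perp$. For sufficiency you use the same family of balls $B(\pm n x_i, n\|x_i\|)$, and your convexity argument spells out the step the paper leaves as ``easy to verify.''

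One cosmetic point: replacing $r_i$ by $\|x_i\|$ enlarges the radius rather than shrinking it, and enlarging is what your argument actually uses.
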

   
With this characterization in hand, we turn to the study of the BCP in spaces of bounded linear operators. The BCP has recently been investigated in both commutative function spaces and non-commutative operator spaces in  \cite{LLLZ}. In that work, a necessary condition for the BCP of $\mathbb{L}(\mathbb X,\mathbb Y)$ were obtained for general Banach spaces $\mathbb{X}$ and $\mathbb{Y}$. 
In the present work, we continue this line of investigation and obtain a characterization of the BCP for subspaces of $\mathbb L(\mathbb X,\mathbb Y)$ under a natural density assumption.

 \begin{theorem}
          Let $\mathbb{X}, \mathbb{Y}$ be  Banach spaces such that either of the following conditions holds true: 
          \begin{itemize}
              \item [$(i)$] $SE(\mathbb{X})$ is dense in $\mathbb{X}^*$

              \item[$(ii)$] The set of all Fr\'echet differentiable points is dense in $\mathbb{Y}.$ 
          \end{itemize}
          Let $\mathcal{W}$ be a subspace of $\mathbb{L}(\mathbb{X}, \mathbb{Y})$ containing $\mathbb{F}(\mathbb{X}, \mathbb{Y}).$ Then  $\mathcal{W}$ has the BCP if and only if both $\mathbb{X}^*$ and $\mathbb{Y}$ have the BCP.
    \end{theorem}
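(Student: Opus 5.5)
The plan is to reduce everything to the orthogonality characterization in the Lemma above: $\mathcal{W}$ has the BCP if and only if there is a countable family $\{T_i\}\subset \mathcal{W}\setminus\{0\}$ with $\cap_i T_i^\perp=\{0\}$.

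\textbf{Necessity.} I would not reprove this from scratch. I would invoke the necessary condition of \cite{LLLZ} (BCP of $\mathbb{L}(\mathbb{X},\mathbb{Y})$ forces BCP of $\mathbb{X}^*$ and $\mathbb{Y}$). I would then check that its proof only uses rank-one operators $x^*\otimes y$, so that it runs verbatim for any subspace $\mathcal{W}\supseteq \mathbb{F}(\mathbb{X},\mathbb{Y})$. This check is a point I would have to verify rather than assume.

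\textbf{Sufficiency.} Assume $\mathbb{X}^*$ and $\mathbb{Y}$ have the BCP. By the Lemma, pick countable sets $\{f_i\}\subset \mathbb{X}^*\setminus\{0\}$ and $\{y_j\}\subset\mathbb{Y}\setminus\{0\}$ with $\cap_i f_i^\perp=\{0\}$ and $\cap_j y_j^\perp=\{0\}$. By the remark that ball centers may be chosen from any dense set, I can arrange the following. In case $(i)$, each $f_i\in SE(\mathbb{X})$, strongly exposing some $x_i\in S_{\mathbb{X}}$ after normalisation. In case $(ii)$, each $y_j$ is a Fréchet differentiable point, with unique support functional $g_j$. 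To justify this, I would pass from the balls to the orthogonality sets through the proof of the Lemma. The candidate family is $T_{ij}:=f_i\otimes y_j\in\mathbb{F}(\mathbb{X},\mathbb{Y})\subseteq\mathcal{W}$. I then take $S\in\mathcal{W}$ with $T_{ij}\perp_B S$ for all $i,j$ and aim to show $S=0$.

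\textbf{Case $(i)$.} The key ingredient is a rank-one orthogonality criterion. If $f$ strongly exposes $x_f$, then $f\otimes y\perp_B S$ implies $y\perp_B Sx_f$. To prove it, suppose $\|y+\lambda Sx_f\|<\|y\|$ for some $\lambda$. Points $x$ with $|f(x)|$ close to $\|f\|$ are uniformly close to $\pm x_f$, so $\|(f\otimes y+\lambda S)x\|$ stays below $\|f\|\|y\|$ near $\pm x_f$. Away from $\pm x_f$, the value $|f(x)|$ drops by a fixed margin, which dominates once $|\lambda|$ is replaced by a small multiple; convexity in $\lambda$ handles this rescaling. Together these give $\|f\otimes y+\lambda' S\|<\|f\otimes y\|$, a contradiction.

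Granting the criterion, $y_j\perp_B Sx_i$ for all $j$, so $Sx_i=0$ for every $i$. Now take $g\in\mathbb{Y}^*$ and set $h=S^*g$. Then $h(x_i)=0$, so $\|f_i+\lambda h\|\ge (f_i+\lambda h)(x_i)=\|f_i\|$, i.e. $f_i\perp_B h$ for all $i$. Hence $h=0$, so $S^*=0$ and $S=0$.

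\textbf{Case $(ii)$.} Dually, the criterion I need is: if $y$ is Fréchet smooth with support functional $g_y$, then $f\otimes y\perp_B S$ implies $f\perp_B S^*g_y$ in $\mathbb{X}^*$. Fréchet differentiability gives $\|y+tz\|=\|y\|+t\,g_y(z)+o(t)$ uniformly over $z$ in bounded sets. This lets me compare $\|f\otimes y+tS\|$ with $\sup_x |f(x)\|y\|+t\,g_y(Sx)|$ up to $o(t)$.

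Granting this, $f_i\perp_B S^*g_j$ for all $i$, so $S^*g_j=0$, i.e. $g_j(Sx)=0$ for all $x$. Since $g_j$ supports $y_j$, we get $\|y_j+\lambda Sx\|\ge g_j(y_j)=\|y_j\|$, so $y_j\perp_B Sx$ for all $j$. Hence $Sx=0$ for every $x$, and again $S=0$.

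\textbf{Main obstacle.} I expect the two rank-one orthogonality criteria to be the hardest part, since they are exactly where strong exposedness and Fréchet (rather than Gâteaux) smoothness are needed. The density replacement of $\{f_i\}$ and $\{y_j\}$ is a secondary point: I must confirm that perturbing the orthogonality family keeps $\cap_i f_i^\perp=\{0\}$ and $\cap_j y_j^\perp=\{0\}$. I would do this via the ball formulation, where small perturbations of the centers are harmless.
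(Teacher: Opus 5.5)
Your argument is correct, and its skeleton matches the paper's: rank-one operators $f_i\otimes y_j$, density to place the ball centres in $SE(\mathbb{X})$ or among Fr\'echet points, and then Lemma~\ref{lemma:main}. The difference is in the key orthogonality step.

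The paper routes sufficiency through Theorem~\ref{operator:main}. It first turns the BCP of $\mathbb{X}^*$ (resp.\ $\mathbb{Y}$) into a countable separating family of strongly exposed points of $B_{\mathbb{X}}$ (resp.\ weak$^*$-strongly exposed points of $B_{\mathbb{Y}^*}$). It then invokes the orthogonality criterion for absolutely strongly exposing operators (Lemma~\ref{BSP2}). In case $(ii)$ it also passes to adjoints $A_{ij}\in\mathbb{L}(\mathbb{Y}^*,\mathbb{X}^*)$ via weak$^*$-to-weak$^*$ continuity.

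You instead prove the two rank-one criteria by hand. Your case $(i)$ criterion is exactly the rank-one instance of Lemma~\ref{BSP2}. Your Fr\'echet-expansion criterion $f\otimes y\perp_B S\Rightarrow f\perp_B S^*g_y$ replaces the whole adjoint detour. You then conclude directly from $\bigcap_i f_i^\perp=\{0\}$ and $\bigcap_j y_j^\perp=\{0\}$, with no separating families.

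The paper's route gives a density-free sufficient condition with a clear geometric meaning. Yours is self-contained and asks for less in case $(ii)$: only $\bigcap_j\ker g_j=\{0\}$. That is precisely what the proof of Proposition~\ref{Proposition: Frechet diff} actually establishes. Weak$^*$-density of the span of the $g_j$ does not give separation of all of $\mathbb{Y}^{**}$ (that would force $\mathbb{Y}^*$ to be norm separable), so your direct argument avoids a genuine subtlety in the paper's chain.

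When you write case $(ii)$ up, note that the $o(t)$ expansion is uniform only on $\{x\in B_{\mathbb{X}}:|f(x)|\ge c\}$. The remaining $x$ contribute at most $c+t|\lambda|\|S\|$, which is harmless for small $t$.

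The necessity check you flag is genuinely needed: the paper simply cites Theorem~\ref{JFA}, which is stated for $\mathbb{L}(\mathbb{X},\mathbb{Y})$ only. It does go through for any $\mathcal{W}\supseteq\mathbb{F}(\mathbb{X},\mathbb{Y})$.
\begin{itemize}
\item[] Suppose $\bigcap_n T_n^-=\{0\}$ in $\mathcal{W}$. Fix $x_0^*\in S_{\mathbb{X}^*}$ and take $y\in S_{\mathbb{Y}}$. Then $\|T_n-\mu\,x_0^*\otimes y\|<\|T_n\|$ for some $n$ and some $\mu>0$.
\item[] Choose $x$ from a fixed norming sequence $\{x_{n,m}\}_m\subset B_{\mathbb{X}}$ for $T_n$, with $\|T_nx\|$ close enough to $\|T_n\|$. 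This gives $\|T_nx-\mu x_0^*(x)y\|<\|T_nx\|$, so $y\notin(T_nx)^\perp$.
\item[] Hence the countably many vectors $T_nx_{n,m}$ witness the BCP of $\mathbb{Y}$. Dually, norming functionals $g_{n,m}\in B_{\mathbb{Y}^*}$ for $T_n^*$, tested against $x^*\otimes y_0$, show that the functionals $T_n^*g_{n,m}$ witness the BCP of $\mathbb{X}^*$.
\end{itemize}
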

 
 
 The above result is established in  Theorem~\ref{theorem:characterization} as an important application of this study carried out by us.  It yields a complete affirmative answer to an open question posed in \cite{BLS} concerning the BCP of $\mathbb{L}(L^p[0,1])$. Moreover, it enables us to refine several corresponding results on the BCP of non-commutative operator spaces established in \cite{LLLZ}.

 As a further application, we consider the BCP in Lipschitz spaces, $\operatorname{Lip}_0(M,\mathbb Y)$, the space of all Lipschitz functions from a pointed metric space $M$ to a Banach space $\mathbb Y$, equipped with the Lipschitz norm. The natural isometric isomorphism between $\operatorname{Lip}_0(M,\mathbb Y)$ and $\mathbb L(\mathcal F(M),\mathbb Y),$ where $\mathcal{F}(M)$ is the Lipschtiz-free space,  provides a  route to the study of the BCP in this setting. As a consequence, we obtain the following result (see Corollary~\ref{cor:lip}).

\begin{theorem}
Let $M$ be a pointed metric space and let $\mathbb Y$ be a Banach space such that either $\mathcal F(M)$ or $\mathbb Y^*$ has the RNP. Then $\operatorname{Lip}_0(M,\mathbb Y)$ has the BCP if and only if both $\operatorname{Lip}_0(M)$ and $\mathbb Y$ have the BCP.
\end{theorem}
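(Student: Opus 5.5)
The plan is to reduce the statement to the preceding theorem on subspaces $\mathcal{W}\subseteq \mathbb{L}(\mathbb{X},\mathbb{Y})$ containing $\mathbb{F}(\mathbb{X},\mathbb{Y})$, applied with $\mathbb{X}=\mathcal{F}(M)$ and $\mathcal{W}=\mathbb{L}(\mathcal{F}(M),\mathbb{Y})$. The reduction uses the standard isometric isomorphisms $\operatorname{Lip}_0(M,\mathbb{Y})\cong \mathbb{L}(\mathcal{F}(M),\mathbb{Y})$ and $\operatorname{Lip}_0(M)\cong \mathcal{F}(M)^*$. The BCP is an isometric invariant: it is phrased via balls in the norm, and by the Lemma it is equivalent to a condition on Birkhoff-James orthogonality, which is preserved by linear isometries. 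So $\operatorname{Lip}_0(M,\mathbb{Y})$ has the BCP if and only if $\mathbb{L}(\mathcal{F}(M),\mathbb{Y})$ does. Likewise, $\operatorname{Lip}_0(M)$ has the BCP if and only if $\mathcal{F}(M)^*$ does. It therefore suffices to check that one of the hypotheses $(i)$ or $(ii)$ of that theorem holds for the pair $(\mathcal{F}(M),\mathbb{Y})$.

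\emph{Case 1: $\mathcal{F}(M)$ has the RNP.} I will verify $(i)$, namely that $SE(\mathcal{F}(M))$ is dense in $\mathcal{F}(M)^*$. I would invoke Bourgain's theorem (the Bourgain--Phelps characterization of the RNP): if $\mathbb{X}$ has the RNP, then for every closed bounded convex set $C\subseteq \mathbb{X}$ the functionals that strongly expose $C$ form a dense $G_\delta$ subset of $\mathbb{X}^*$. Taking $C=B_{\mathbb{X}}$ gives density of strongly exposing functionals. This set is a cone, so after normalizing it coincides with $SE(\mathbb{X})$ minus $0$, and it remains dense in $\mathbb{X}^*$. Hence $(i)$ holds.

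\emph{Case 2: $\mathbb{Y}^*$ has the RNP.} I will verify $(ii)$, namely that the Fréchet differentiability points of the norm of $\mathbb{Y}$ are dense in $\mathbb{Y}$. The first thing to try is the classical duality: $\mathbb{Y}^*$ has the RNP if and only if $\mathbb{Y}$ is an Asplund space when restricted to separable subspaces, and in fact if and only if $\mathbb{Y}$ is Asplund (Stegall). In an Asplund space every continuous convex function, in particular the norm, is Fréchet differentiable on a dense $G_\delta$ set. Hence $(ii)$ holds.

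Combining the two cases with the preceding theorem gives: $\operatorname{Lip}_0(M,\mathbb{Y})$ has the BCP if and only if both $\mathcal{F}(M)^*\cong\operatorname{Lip}_0(M)$ and $\mathbb{Y}$ have the BCP. The only genuinely nontrivial input is the cited RNP results: Bourgain--Phelps in Case 1 and Stegall's theorem that the RNP of $\mathbb{Y}^*$ implies $\mathbb{Y}$ is Asplund in Case 2. The main care point is Case 2, where the precise form of the Asplund/RNP duality must be cited correctly for possibly nonseparable $\mathbb{Y}$.
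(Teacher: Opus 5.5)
Your argument is correct and follows the paper's own route: the paper obtains this statement from Theorem~\ref{theorem:characterization} (through Corollary~\ref{RNP}) by transporting the BCP along the isometries $\operatorname{Lip}_0(M,\mathbb{Y})\cong\mathbb{L}(\mathcal{F}(M),\mathbb{Y})$ and $\operatorname{Lip}_0(M)\cong\mathcal{F}(M)^*$. It uses the same two density facts you use: the RNP of $\mathcal{F}(M)$ makes $SE(\mathcal{F}(M))$ dense, and the RNP of $\mathbb{Y}^*$ makes the Fr\'echet differentiability points of $\mathbb{Y}$ dense. The only difference is that you name the underlying results (Bourgain's theorem on strongly exposing functionals, and Stegall's Asplund/RNP duality), which the paper simply quotes as standard.
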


We further investigate the BCP in the scalar-valued Lipschitz space $\operatorname{Lip}_0(M)$, obtaining sufficient conditions for the BCP as well as a characterization under the additional assumption that $\mathcal F(M)$ has the RNP.

 We also investigate the stability of the BCP under countable direct sums equipped with the $p$-norm. In the final  section we study  the problem of minimal ball covering number for the case of finite-dimensional Banach spaces. We obtain a tractable characterization of the minimal ball covering number of a finite-dimensional Banach space in terms of Birkhoff-James orthogonality. 
We also provide a sufficient condition for an $n$-dimensional Banach space to have the minimal ball covering number $n+1$, which covers both smooth Banach spaces of dimension $ n $ and $ \ell_{1}^n. $ Let us mention here that the minimal ball covering number of both these spaces were known previously (\cite{C}, \cite{CLZZ}) but the proofs varied significantly. Our main contribution in this regard is to show that by using the language of Birkhoff-James orthogonality, it is possible to unify the proofs and to generalize the result for larger classes of Banach spaces. As a further consequence of our approach involving orthogonality, we obtain upper bounds for the minimal ball covering numbers in spaces of bounded linear operators between finite-dimensional Banach spaces. In particular, for strictly convex and smooth Banach spaces, we obtain the following result (see Theorem~\ref{theorem:same number}).

\begin{theorem}
	Let $ \mathbb{X}$ be an $m$-dimensional strictly convex Banach space and let $\mathbb{Y}$ be an $n$-dimensional smooth Banach space. Then $S_{\mathbb{L}(\mathbb{X}, \mathbb{Y})}$ has a ball covering consisting of $mn+1$ balls.
\end{theorem}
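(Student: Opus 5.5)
The plan is to reduce the statement to a finite-dimensional, ``one-sided'' version of the orthogonality characterization in the first lemma, and then to build explicitly $mn+1$ rank-one operators whose one-sided orthogonality cones meet only at the origin.

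\textbf{Step 1 (covering criterion).} Since $S_{\mathbb{L}(\mathbb{X},\mathbb{Y})}$ is compact in finite dimensions, a covering by balls avoiding the origin uses finitely many balls. I would show that $S_{\mathbb{Z}}$ is covered by $k$ such balls if and only if there are nonzero $T_1,\dots,T_k\in\mathbb{Z}$ with $\bigcap_{i} T_i^{+}=\{0\}$. The key observation is that $y\in S_{\mathbb{Z}}$ fails to lie in every ball $B(\lambda T,\lambda\|T\|)$, $\lambda>0$, exactly when $\|\lambda T-y\|\ge \lambda\|T\|$ for all $\lambda>0$, that is, when $-y\in T^{+}$. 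Homogeneity then lets us pass from the $k$ families of dilated balls to finitely many fixed balls; this passage uses compactness of $S_{\mathbb{Z}}$ and the openness of the balls. This is the same argument as in the first lemma, but with $x^{+}$ in place of $x^\perp$.

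\textbf{Step 2 (computing $T^{+}$ for good rank-one operators).} Take $T=x^{*}\otimes y$ with $x^*\in S_{\mathbb{X}^*}$ and $y\in S_{\mathbb{Y}}$. By strict convexity of $\mathbb{X}$, $x^*$ attains its norm on $S_{\mathbb{X}}$ only at $\pm x$ for a unique $x$, so $M_T=\{\pm x\}$. Since $\mathbb{Y}$ is smooth, $Tx=y$ has a unique supporting functional $g$. I would then use the standard finite-dimensional description of one-sided orthogonality of operators, namely that $A\in T^{+}$ if and only if $Au\in(Tu)^{+}$ for some $u\in M_T$. For a smooth point, $(Tu)^{+}$ is the half-space where the unique supporting functional is nonnegative. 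Hence
\[
T^{+}=\{A:\ g(Ax)\ge 0\}\quad\text{(using } u=x\text{; the choice } u=-x \text{ gives the same set).}
\]
So each such $T$ contributes the half-space defined by the rank-one functional $A\mapsto g(Ax)$. Moreover, every pair $(x,g)$ with $x\in S_{\mathbb{X}}$ and $g\in S_{\mathbb{Y}^*}$ arises in this way. For $x$, take any $x^*\in J(x)$; it exposes $x$ by strict convexity. For $g$, use finite-dimensionality: $g$ attains its norm at some $y$, and smoothness gives $J(y)=\{g\}$.

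\textbf{Step 3 (choice of $mn+1$ operators).} Pick a basis $x_1,\dots,x_m$ of $\mathbb{X}$ and a basis $g_1,\dots,g_n$ of $\mathbb{Y}^*$. Put $x_0=-\sum_i x_i$ and $g_0=\sum_j g_j$, and normalize all of them, which is harmless since only positive scalings occur. The $mn+1$ functionals are
\[
\varphi_{ij}(A)=g_j(Ax_i)\ (1\le i\le m,\ 1\le j\le n),\qquad \varphi_0(A)=g_0(Ax_0).
\]
They satisfy $\varphi_0=-\sum_{i,j}\varphi_{ij}$ up to positive factors. Suppose $A\neq 0$ lies in all the half-spaces. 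Then every $\varphi_{ij}(A)\ge 0$ and $\sum\varphi_{ij}(A)\le 0$, which forces $g_j(Ax_i)=0$ for all $i,j$. This gives $A=0$, because the $x_i$ span $\mathbb{X}$ and the $g_j$ separate points of $\mathbb{Y}$. Step 1 then yields the covering by $mn+1$ balls.

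\textbf{Main obstacle.} I expect the main difficulty to be Step 2. This step requires the operator one-sided orthogonality characterization (the $M_T$-type result of Sain--Paul) in the half-space form, and it requires checking that strict convexity of $\mathbb{X}$ really pins $M_T$ down to $\pm x$. The second point needs care with the sign, since the case $u=-x$ must give the same half-space; it does, because $g(-Ax)\ge0$ relative to $Tu=-y$ is equivalent to $g(Ax)\ge 0$. The other delicate point is the sign convention in Step 1, which must match the convention used for $T^{+}$.
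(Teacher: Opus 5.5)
Your proposal is correct and follows essentially the paper's route: the same $mn+1$ functionals $A\mapsto g_j(Ax_i)$ and $A\mapsto g_0(Ax_0)$ with $x_0=-\sum_i x_i$ and $g_0=\sum_j g_j$, and the same cone argument forcing $g_j(Ax_i)=0$ and hence $A=0$. The only difference is how the half-spaces are identified. You compute $T^{+}$ for rank-one operators directly from the $M_T$ characterization, which is what the paper's Lemma~\ref{lemma:operator1} does. The paper instead cites the description of the exposed points of $B_{\mathbb{L}(\mathbb{X},\mathbb{Y})^*}$ and then applies Theorem~\ref{theorem:sufficient condition}.
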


\section{Preliminaries}

The purpose of this section is to recall some important results in the theory of ball covering properties of Banach spaces, mainly for the convenience of the readers. These results will be used frequently throughout this article without further explanation. Let us begin with the following fundamental characterization of Birkhoff-James orthogonality.	
	
	\begin{theorem}\cite{J}\label{J}
		Let $\mathbb{X}$ be a  Banach space and let $ x, y \in \mathbb{X}.$ Then $x \perp_B y$ if and only if there exists $f \in \mathbb{X}^*$ such that $f(x)=\|f\| \|x\|$ and $ f(y)=0. $
	\end{theorem}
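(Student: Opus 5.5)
When $x=0$ the equivalence is trivial: $0\perp_B y$ always holds, and any $f$ satisfies $f(x)=\|f\|\|x\|=0$; for $y=0$ we may take $f=0$. So I will assume $x\neq 0$ throughout. I will read the statement with $f$ required to be nonzero, since otherwise $f=0$ witnesses the right-hand side vacuously. By normalising, this means $f\in S_{\mathbb{X}^*}$, i.e.\ $f\in J(x)$ with $f(y)=0$.

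\emph{Sufficiency.} Suppose $f\in S_{\mathbb{X}^*}$ satisfies $f(x)=\|x\|$ and $f(y)=0$. Then for every $\lambda\in\mathbb{R}$,
\[
\|x+\lambda y\|\ \ge\ f(x+\lambda y)\ =\ f(x)+\lambda f(y)\ =\ \|x\|,
\]
so $x\perp_B y$. This direction is immediate.

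\emph{Necessity.} Suppose $x\perp_B y$ with $x\neq 0$. I will build $f$ by Hahn--Banach from a functional on $\operatorname{span}\{x,y\}$, and I first need $x$ and $y$ to be linearly independent, or else $y=0$.
\begin{enumerate}[(a)]
\item If $y=\mu x$ with $\mu\neq 0$, then taking $\lambda=-1/\mu$ gives $\|x+\lambda y\|=0<\|x\|$, contradicting orthogonality.
\item Hence either $y=0$, in which case any $f\in J(x)$ works (such $f$ exists by Hahn--Banach), or $x,y$ are linearly independent.
\end{enumerate}

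In the independent case, define $g$ on $V=\operatorname{span}\{x,y\}$ by
\[
g(\alpha x+\beta y)=\alpha\|x\|,
\]
which is well defined and linear. The key estimate is $\|g\|\le 1$, that is, $|\alpha|\,\|x\|\le\|\alpha x+\beta y\|$ for all $\alpha,\beta$.
\begin{enumerate}[(a)]
\item For $\alpha=0$ the left side is $0$, so the estimate holds.
\item For $\alpha\neq 0$, write $\|\alpha x+\beta y\|=|\alpha|\,\|x+(\beta/\alpha)y\|$ and apply $x\perp_B y$ with $\lambda=\beta/\alpha$ to get $\|x+(\beta/\alpha)y\|\ge\|x\|$.
\end{enumerate}
Since $g(x)=\|x\|$, in fact $\|g\|=1$. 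Extending $g$ to $f\in\mathbb{X}^*$ with $\|f\|=1$ by Hahn--Banach gives $f(x)=\|x\|=\|f\|\|x\|$ and $f(y)=0$.

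The only point needing care is the well-definedness of $g$. This is exactly why the degenerate case $y\in\operatorname{span}\{x\}$ was dispatched in step (a) before defining $g$. The rest is routine.
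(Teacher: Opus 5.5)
The paper does not prove this statement. It is quoted from James in the preliminaries as a standard tool, so there is no in-paper argument to compare yours against.

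Your proof is correct and is the classical argument. Sufficiency is the one-line estimate $\|x+\lambda y\|\ge f(x+\lambda y)=\|x\|$. For necessity, you take the functional $\alpha x+\beta y\mapsto \alpha\|x\|$ on $\operatorname{span}\{x,y\}$. Its norm is at most $1$ exactly because $x\perp_B y$, and you extend it norm-preservingly by Hahn--Banach. You are also right that $f$ must be required to be nonzero (equivalently $f\in J(x)$ after normalising), since otherwise $f=0$ makes the right-hand side vacuous.

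One small wrinkle: your treatment of the case $x=0$ relies on $f=0$, which conflicts with the nonzero reading you adopt immediately afterwards. This is not a flaw in your reasoning but is forced by the statement itself. For $\mathbb{X}=\mathbb{R}$, $x=0$, $y=1$, we have $x\perp_B y$, yet no nonzero $f$ satisfies $f(y)=0$. So the clean formulation is the one you actually prove: for $x\neq 0$, $x\perp_B y$ if and only if some $f\in J(x)$ satisfies $f(y)=0$.
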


In real Banach spaces, Birkhoff-James orthogonality relation essentially consists of two parts. It is trivial to see that for any $ x, y \in \mathbb{X}, $ $ x \perp_B y $ if and only if $ y \in x^{+} \cap x^{-}. $ We recall the following important functional characterizations of the positive and negative parts of an element in a Banach space.
    
	\begin{lemma}\label{functional}\cite[Lemma. 7.3.2]{MPS}
		Let $\mathbb{X}$ be a real Banach space and $x, u \in \mathbb{X}.$ Then the following holds:
		\begin{itemize}
        \item[$(i)$] $u \in x^-$ if and only if $ f(u) \leq 0$ for all $ f \in J(x).$
        \item[$(ii)$] $u \in x^- \setminus x^+$ if and only if $f(u) < 0 $ for all $ f \in J(x)$.
        \item[$(iii)$] $u \in x^+$ if and only if $f(u) \geq 0 $ for all $ f \in J(x).$
			\item[$(iv)$] $u \in x^+ \setminus x^-$ if and only if $f(u) > 0 $ for all $ f \in J(x).$
					\end{itemize}
	\end{lemma}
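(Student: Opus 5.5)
The plan is to reduce every item to the one-sided directional derivative of the norm at $x$, together with the supporting-functional formula
\[
\tau_+(x,v):=\lim_{t\to 0^+}\frac{\|x+tv\|-\|x\|}{t}=\max_{f\in J(x)} f(v),\qquad v\in\mathbb{X},
\]
for $x\neq 0$. The case $x=0$ is trivial: then $x^+=x^-=\mathbb{X}$ and $J(0)$ gives no restriction. I would prove the formula first. The limit exists because $t\mapsto \|x+tv\|$ is convex, so the difference quotient is nondecreasing in $t$. For every $f\in J(x)$ we have $\|x+tv\|\ge f(x+tv)=\|x\|+tf(v)$, which gives ``$\ge$''. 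For the reverse inequality, choose $f_t\in J(x+tv)$. Then $\|x+tv\|-\|x\|\le f_t(x+tv)-f_t(x)=tf_t(v)$. A weak$^*$ cluster point $f$ of $(f_t)$ as $t\to0^+$ (Banach--Alaoglu) lies in $J(x)$, because $f_t(x)\ge \|x+tv\|-t\|v\|\to\|x\|$. Passing to the limit along the cluster net gives $\tau_+(x,v)\le f(v)$, and the maximum is attained.

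Next I translate membership in $x^\pm$. By convexity, $u\in x^+$ (i.e.\ $\|x+\lambda u\|\ge\|x\|$ for all $\lambda\ge0$) holds if and only if $\tau_+(x,u)\ge 0$. Indeed, the quotient is nondecreasing, so a nonnegative limit forces every quotient to be nonnegative, and conversely. Likewise $u\in x^-$ if and only if $\tau_+(x,-u)\ge0$. By the formula, these become: there is $f\in J(x)$ with $f(u)\ge 0$, respectively with $f(u)\le0$. Taking complements gives $u\notin x^-$ if and only if $f(u)>0$ for every $f\in J(x)$, and $u\notin x^+$ if and only if $f(u)<0$ for every $f\in J(x)$.

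Items (ii) and (iv) now follow directly. Since every $u$ lies in $x^+\cup x^-$, we have $x^-\setminus x^+=\mathbb{X}\setminus x^+$, which gives (ii), and symmetrically $x^+\setminus x^-=\mathbb{X}\setminus x^-$, which gives (iv).

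For (i), and symmetrically (iii), the implication ``$f(u)\le0$ for all $f\in J(x)$ $\Rightarrow$ $u\in x^-$'' is immediate: $\|x-tu\|\ge f(x-tu)=\|x\|-tf(u)\ge\|x\|$ for every $t\ge 0$.

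The main obstacle is the converse in (i) and (iii), which requires a universal statement over $J(x)$. The argument above only yields one functional with $f(u)\le 0$, so I would have to check separately whether the universal version can be obtained. When $x$ is smooth, $J(x)$ is a singleton and the two readings coincide, so the lemma holds exactly as worded.

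At a non-smooth $x$ the universal converse does \emph{not} follow from the argument, and in fact it fails. Take $x=(1,1)\in\ell_\infty^2$ and $u=(1,-1)$. Then $\|x+\lambda u\|=1+|\lambda|\ge 1$, so $x\perp_B u$ and $u\in x^+\cap x^-$. However, $J(x)$ contains the coordinate functionals $e_1^*$ and $e_2^*$, with $e_1^*(u)=1>0$ and $e_2^*(u)=-1<0$. So in (i) ``$u\in x^-$'' does not imply ``$f(u)\le 0$ for all $f\in J(x)$'', and similarly for (iii).

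Consequently, in (i) and (iii) I can establish the ``if'' direction as worded, together with the full equivalence at smooth $x$. In general the ``only if'' direction holds only in the existential form ``for some $f\in J(x)$''. Items (ii) and (iv) are proved as stated.
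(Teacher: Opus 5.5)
Your proposal is correct. It also does more than prove the lemma: it shows that items $(i)$ and $(iii)$, as printed here, are false.

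\textbf{The counterexample.} Take $x=(1,1)$ and $u=(1,-1)$ in $\ell_\infty^2$. Then $\|x+\lambda u\|_\infty=1+|\lambda|$, so $u\in x^+\cap x^-$. On the other hand $e_1^*,e_2^*\in J(x)$, with $e_1^*(u)=1$ and $e_2^*(u)=-1$.

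\textbf{The true statement.} The correct forms are ``$u\in x^-$ iff $f(u)\le 0$ for \emph{some} $f\in J(x)$'' and ``$u\in x^+$ iff $f(u)\ge 0$ for \emph{some} $f\in J(x)$''. What you establish is exactly the true content of the lemma:
\begin{itemize}
\item the ``if'' halves of $(i)$ and $(iii)$ as worded;
\item the existential ``only if'' halves;
\item $(ii)$ and $(iv)$ in full, by complementation using $x^+\cup x^-=\mathbb{X}$.
\end{itemize}
The paper gives no proof of its own; it only cites the source. So there is no argument of theirs to compare against.

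\textbf{Effect on the rest of the paper.} Every place the paper uses this lemma concerns a smooth point:
\begin{itemize}
\item the step $(ii)\Leftrightarrow(v)$ in Lemma~\ref{lemma:orthogonality};
\item the description of $T_i^-$ in Lemma~\ref{lemma:operator1};
\item the identity $y_i^-=x_i^-$ in the final proposition on subspaces.
\end{itemize}
At a smooth point $J(x)$ is a singleton, so the universal and existential readings coincide. The misstatement therefore does not propagate.

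\textbf{Comparison of routes.} You go through the max formula $\tau_+(x,v)=\max_{f\in J(x)}f(v)$, which costs a Banach--Alaoglu cluster-point argument. A lighter proof, in the spirit of James's proof of Theorem~\ref{J}, works as follows. If $u\in x^+$, the convex ray $\{x+\lambda u:\lambda\ge 0\}$ misses the open ball $B(0,\|x\|)$. A norm-one separating functional $f$ then satisfies $f(x+\lambda u)\ge\|x\|$ for all $\lambda\ge 0$.
\begin{itemize}
\item Taking $\lambda=0$ gives $f\in J(x)$.
\item Then $\lambda f(u)\ge 0$ for $\lambda>0$, so $f(u)\ge 0$.
\end{itemize}
This gives the existential $(iii)$ directly, and $(i)$ by replacing $u$ with $-u$. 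Items $(ii)$ and $(iv)$ follow by the same complementation you used. Your route additionally identifies the value of the one-sided derivative, which is more than this lemma needs.

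\textbf{One small correction: the case $x=0$ is not trivial.}
\begin{itemize}
\item With the paper's formula, $J(0)=S_{\mathbb{X}^*}$, and then $(i)$ and $(iii)$ fail for every $u\neq 0$.
\item If instead $J(0)$ is read as empty (the paper defines supporting functionals only at nonzero points), then $(ii)$ and $(iv)$ fail, since $0^-\setminus 0^+=\emptyset$.
\end{itemize}
The lemma should simply assume $x\neq 0$. Your max-formula argument needs this anyway, to conclude that the weak$^*$ cluster point has norm one and hence lies in $J(x)$.
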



Recently in \cite{Sain25}, a local version of the BCP has been studied using the notion of norm derivatives. We recall the result below \cite{Sain25}.

 \begin{theorem}\cite [Th. 2.1]{Sain25}
  Let $\mathbb{X}$ be a Banach space and let $x, y \in \mathbb{X}$ be non-zero elements. Then the following conditions are equivalent:
\begin{itemize}
    \item[$(i)$] There exists a ball centered at $\lambda x $, for some $\lambda > 0$, which contains $y$ but does not contain the zero vector,
    
    \item[$(ii)$] $\rho'_{-}(x, y) > 0$.
\end{itemize}
\end{theorem}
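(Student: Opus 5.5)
The statement to prove is the last one displayed, \cite[Th. 2.1]{Sain25}: a ball centered at some $\lambda x$ with $\lambda>0$ contains $y$ but not $0$ if and only if $\rho'_-(x,y)>0$. Here $\rho'_-(x,y)=\|x\|\lim_{t\to 0^-}\frac{\|x+ty\|-\|x\|}{t}$, and by the standard formula $\rho'_-(x,y)=\|x\|\min\{f(y): f\in J(x)\}$.

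\textbf{Reduction to one inequality.} The ball $B(\lambda x,r)$ excludes $0$ exactly when $r\le\lambda\|x\|$. It contains $y$ exactly when $\|\lambda x-y\|<r$. So a suitable ball exists if and only if
\[
\|\lambda x - y\| < \lambda\|x\| \quad\text{for some } \lambda>0 .
\]
Dividing by $\lambda$ and setting $t=-1/\lambda<0$, this reads $\|x+ty\|<\|x\|$ for some $t<0$. Equivalently, $-y\notin x^+$, that is, $y\notin x^-$.

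\textbf{Functional characterization.} By Lemma~\ref{functional}$(i)$, $y\in x^-$ if and only if $f(y)\le 0$ for all $f\in J(x)$. I will instead need the sharper statement that $y\notin x^-$ holds if and only if $f(y)>0$ for every $f\in J(x)$, which follows from Lemma~\ref{functional}$(iv)$ after checking the sign conventions. If that lemma is arranged inconveniently, I would argue directly from convexity of $\phi(t)=\|x+ty\|$:
\begin{itemize}
\item If $\|x+t_0y\|<\|x\|$ for some $t_0<0$, convexity gives $\phi'_-(0)\ge(\phi(0)-\phi(t_0))/(-t_0)>0$.
\item Conversely, if $\phi'_-(0)>0$, then $\phi(t)<\phi(0)$ for all small $t<0$.
\end{itemize}
So condition (i) is equivalent to $\phi'_-(0)>0$, and $\rho'_-(x,y)=\|x\|\phi'_-(0)$, which gives (ii).

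\textbf{Main obstacle.} The only delicate point is the convexity step together with the identification of $\phi'_-(0)$ with the minimum over $J(x)$. I would use the direct derivative formulation to avoid relying on that identification: the one-sided derivative exists by convexity, and the strict inequality is transferred via the difference-quotient monotonicity. The radius bound $r\le\lambda\|x\|$ must be handled consistently with open balls so that the inequality stays strict.
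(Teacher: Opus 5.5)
Your proof is correct and complete. The paper gives no proof of this statement; it only quotes it from \cite{Sain25}, so there is no in-paper argument to compare with. Your route is the natural self-contained one: reduce $(i)$ to the existence of $t<0$ with $\|x+ty\|<\|x\|$, that is, $y\notin x^-$, and then use monotonicity of the difference quotients of the convex function $t\mapsto\|x+ty\|$. It also avoids needing the formula $\rho'_-(x,y)=\|x\|\min\{f(y):f\in J(x)\}$.

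One caution about the functional detour. The characterization you call ``sharper'' is correct: $y\notin x^-$ holds if and only if $f(y)>0$ for every $f\in J(x)$. This follows from Lemma~\ref{functional}$(iv)$ together with the fact that every $y$ lies in $x^+\cup x^-$.

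However, it is incompatible with item $(i)$ of Lemma~\ref{functional} as printed, which you also quote. Negating that item would only give some $f\in J(x)$ with $f(y)>0$. The printed item $(i)$ is in fact misstated. Take $\ell_\infty^2$ with $x=(1,1)$ and $y=(1,-1)$. Then $\|x+\lambda y\|=1-\lambda\ge 1$ for $\lambda\le 0$, so $y\in x^-$. Yet the functional $f(a,b)=a$ lies in $J(x)$ and satisfies $f(y)=1>0$. The correct form is ``$y\in x^-$ if and only if there exists $f\in J(x)$ with $f(y)\le 0$.''

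So do not cite that item as stated. You were right to base the argument on the direct convexity step rather than on it.
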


The following result from \cite{Sain25} is useful in our study, particularly in the finite-dimensional case. 

\begin{prop}\cite[Prop. 2.4]{Sain25}\label{prop:compact}
	Let $\mathbb{X}$ be a Banach space and let $ A \subset \mathbb{X}$ be such that $A \subset \cup_{i=1}^n
	B(\lambda_ix, r_i),$ where $n \in \mathbb{N}$ and
	$x \in S_{\mathbb{X}}$ are fixed, $\lambda_i > 0,$ and $0 < r_i \leq \lambda_i$ for each $1 \leq  i \leq n.$ Then for each $\lambda \geq  \max \{\lambda_i : 1 \leq i \leq n\}$, there
	exists $0 < r_\lambda \leq \lambda$ such that $A \subset B(\lambda x, r_{\lambda}).$
\end{prop}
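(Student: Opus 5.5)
The plan is a direct triangle-inequality argument: each ball $B(\lambda_i x, r_i)$ is pushed outward along the ray through $x$ to a common center $\lambda x$. The radius grows by exactly the distance the center moves, so the condition that the ball avoids the origin is preserved. No earlier result is needed beyond the definition of the open ball and the fact that $\|x\|=1$.

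First, fix $\lambda \geq \max\{\lambda_i : 1 \leq i \leq n\}$ and define
\[
r_\lambda := \max_{1 \leq i \leq n} \bigl( r_i + \lambda - \lambda_i \bigr).
\]
Since $r_i > 0$ and $\lambda - \lambda_i \geq 0$ for every $i$, we get $r_\lambda > 0$. Since $r_i \leq \lambda_i$, each term satisfies $r_i + \lambda - \lambda_i \leq \lambda$, and hence $r_\lambda \leq \lambda$. In particular, $B(\lambda x, r_\lambda)$ does not contain the origin, because $\|0 - \lambda x\| = \lambda \geq r_\lambda$.

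Next, take any $y \in A$ and choose an index $i$ with $y \in B(\lambda_i x, r_i)$, so that $\|y - \lambda_i x\| < r_i$. Using $\|x\| = 1$ and $\lambda - \lambda_i \geq 0$, the triangle inequality gives
\[
\|y - \lambda x\| \leq \|y - \lambda_i x\| + (\lambda - \lambda_i)\|x\| < r_i + \lambda - \lambda_i \leq r_\lambda .
\]
Thus $y \in B(\lambda x, r_\lambda)$, and so $A \subset B(\lambda x, r_\lambda)$, as claimed.

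There is no serious obstacle. The only points needing care are the finiteness of $n$, which guarantees that the maximum defining $r_\lambda$ is attained, so the strict inequality survives and $r_\lambda \leq \lambda$ is preserved. The other is the hypothesis $\lambda \geq \lambda_i$, which is what lets us write $\|(\lambda-\lambda_i)x\| = \lambda - \lambda_i$ without an absolute value. If $\lambda < \lambda_i$ were allowed, the radius would have to grow by $\lambda_i - \lambda$ and could exceed $\lambda$, which is why the statement restricts $\lambda$ as it does.
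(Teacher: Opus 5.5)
Your proof is correct and complete. The key facts are the inclusion $B(\lambda_i x, r_i) \subset B(\lambda x, r_i + \lambda - \lambda_i)$ for $\lambda \geq \lambda_i$, and the bound $r_i + \lambda - \lambda_i \leq \lambda$, which follows from $r_i \leq \lambda_i$. The paper gives no proof of its own here; it only imports the result from \cite{Sain25}.

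One small correction to your closing remark. The strict inequality in your last display comes from $\|y - \lambda_i x\| < r_i$, not from the maximum being attained. So finiteness of $n$ is only needed to make $\max_i \lambda_i$ finite. The same argument with $r_\lambda := \sup_i (r_i + \lambda - \lambda_i)$ works for any family with $\sup_i \lambda_i < \infty$.
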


   To facilitate the arguments in the subsequent sections, we require certain structural properties of strongly exposing functionals, Fr\'echet differentiability, and annihilator subspaces, which will be used throughout the paper. The following lemma describes the dual behavior of a strongly exposing functional. Although elementary, it plays an essential role in the study of orthogonality relations in dual Banach spaces.
	
	\begin{lemma}\label{lemma:SE}
   Let $x^* \in SE(\mathbb{X})$ and let $z^*$ strongly exposes $z \in S_{\mathbb{X}}.$ Then 
   \begin{itemize}
       \item [$(i)$] $z^*$ is smooth and $J(z^*)=\{\psi(z)\}$
       \item[$(ii)$] $(z^*)^\perp = \{ x^*\in \mathbb{X^*}: x^*(z)=0\},$ 
        \end{itemize}
         where $\psi $ is the canonical isometric isomorphism from $\mathbb{X}$ to $\mathbb{X}^{**}.$
\end{lemma}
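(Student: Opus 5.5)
The plan is to prove $(i)$ by a Goldstine approximation combined with the slice characterization of strong exposure, and then to read off $(ii)$ from $(i)$ via Theorem~\ref{J} applied in $\mathbb{X}^*$. We normalize so that $z^* \in S_{\mathbb{X}^*}$ strongly exposes $z \in S_{\mathbb{X}}$. A general $x^* = \lambda z^* \in SE(\mathbb{X})$ is handled afterwards by scaling, since $J(\lambda z^*) = \operatorname{sign}(\lambda)J(z^*)$ and $(\lambda z^*)^\perp = (z^*)^\perp$.

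For $(i)$, first note that $\psi(z) \in J(z^*)$, because $\|\psi(z)\| = 1$ and $\psi(z)(z^*) = z^*(z) = 1$.

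Next we upgrade the sequential definition to a uniform one. We claim that
\[
\operatorname{diam}\{x \in B_{\mathbb{X}} : z^*(x) > 1-\delta\} \to 0 \quad \text{as } \delta \to 0^+ .
\]
Suppose not. Then there are $\varepsilon>0$ and points $x_n \in B_{\mathbb{X}}$ with $z^*(x_n) > 1 - 1/n$ and $\|x_n - z\| \geq \varepsilon/2$. This contradicts the assumption that $z^*$ strongly exposes $z$ (sequential form).

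Now take any $\phi \in J(z^*) \subset S_{\mathbb{X}^{**}}$. By Goldstine's theorem there is a net $(x_\alpha) \subset B_{\mathbb{X}}$ with $\psi(x_\alpha) \xrightarrow{w^*} \phi$. In particular $z^*(x_\alpha) \to \phi(z^*) = 1$, so the $x_\alpha$ eventually lie in every slice $\{z^*>1-\delta\}$. The slice-diameter estimate then gives $x_\alpha \to z$ in norm, hence $\psi(x_\alpha) \to \psi(z)$ in norm and therefore weak$^*$. Since weak$^*$ limits are unique, $\phi = \psi(z)$. Thus $J(z^*) = \{\psi(z)\}$, and $z^*$ is smooth.

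The one delicate point is this passage from sequences to nets. Goldstine only supplies nets, while the definition of strong exposure is sequential, so the slice-diameter reformulation is the essential intermediate step.

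For $(ii)$, apply Theorem~\ref{J} in the Banach space $\mathbb{X}^*$. We have $z^* \perp_B x^*$ if and only if there is $f \in \mathbb{X}^{**}$ with $f(z^*) = \|f\|\,\|z^*\|$ and $f(x^*) = 0$. If $f \neq 0$, then $f/\|f\| \in J(z^*)$, so $(i)$ forces $f = \|f\|\psi(z)$. The condition is therefore equivalent to $\psi(z)(x^*) = x^*(z) = 0$. The case $f = 0$ is vacuous, since taking $f = \psi(z)$ always works exactly when $x^*(z) = 0$. Hence
\[
(z^*)^\perp = \{x^* \in \mathbb{X}^* : x^*(z) = 0\}.
\]
The same identity holds for $x^* = \lambda z^*$ by the homogeneity of Birkhoff--James orthogonality.
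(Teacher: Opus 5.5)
Your proof is correct and follows the paper's route: in $(i)$ you take a Goldstine net $\psi(x_\alpha)\xrightarrow{w^*}\phi$ for an arbitrary $\phi\in J(z^*)$ and use uniqueness of weak$^*$ limits, and in $(ii)$ you apply Theorem~\ref{J} in $\mathbb{X}^*$. What you add is the slice-diameter reformulation, which justifies applying the sequential definition of strong exposure to a net (the paper just asserts $z_\alpha\to z$), and a check of the reverse inclusion in $(ii)$. Your sentence about the case $f=0$ is muddled; simply read Theorem~\ref{J} with $\|f\|=1$.
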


\begin{proof}
    $(i)$ Let $\phi \in J(z^*).$ By Goldstine’s Theorem, the unit ball $B_{\mathbb{X}}$ is weak*-dense in $B_{\mathbb{X}^{**}}.$ This implies that there exists a net $\{z_\alpha\} \subset B_{\mathbb{X}}$ such that $\psi(z_\alpha) \xrightarrow{w^*} \phi$. Therefore, $\psi(z_\alpha)(z^*) \to \phi(z^*) =\|z^*\|,$ which implies that $z^*(z_\alpha) \to z^*(z).$ As $z$ is a strongly exposed point of $B_{\mathbb{X}},$ we get that $z_\alpha \to z$ and therefore, $\psi(z_\alpha) \to \psi(z).$ Hence $\phi = \psi(z).$ This shows that $z^*$ is smooth and $J(z^*)=\{ \psi(z)\}.$

    $(ii)$ Let $x^* \in \mathbb{X}^*$ be such that $z^* \perp x^*.$ Since $J(z^*)= \psi(z),$ it follows from Theorem \ref{J} that $\psi(z)(x^*)=0 \implies x^*(z)=0.$ Therefore, $(z^*)^\perp = \{ x^*\in \mathbb{X^*}: x^*(z)=0\}.$ 
\end{proof}

   The following lemma characterizes Fréchet differentiability of the norm in terms of supporting functionals.
	
	\begin{lemma}\label{lemma:Frechet differentiable}\cite[Prop. 5.11]{P}
  Let $x_0 \in S_{\mathbb{X}}$. The norm of $\mathbb{X}$ is Fr\'echet differentiable at $x_0$ if and only if there exists a unique $x_0^* \in S_{\mathbb{X}^*}$ such that $x_0^*(x_0)=1$, and $x_0^*$ is weak*-strongly exposed by $x_0$.
	\end{lemma}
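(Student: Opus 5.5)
This is \v{S}mulyan's lemma. I read ``$x_0^*$ is weak*-strongly exposed by $x_0$'' in its standard sense: every sequence $\{x_n^*\}\subset B_{\mathbb{X}^*}$ with $x_n^*(x_0)\to 1$ converges to $x_0^*$ \emph{in norm}. The argument below needs norm convergence. With only weak* convergence, as in the definition recalled earlier in the paper, the property characterizes G\^ateaux differentiability instead, so I will use the norm version throughout. The plan is to prove both implications directly from the definition of Fr\'echet differentiability. The one tool is the inequality $\|x_0+ty\|\ge x^*(x_0+ty)$ for $x^*\in B_{\mathbb{X}^*}$, with equality when $x^*$ is a supporting functional at $x_0+ty$.

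\emph{Forward direction.} Fr\'echet differentiability implies G\^ateaux differentiability, so $x_0$ is smooth and $J(x_0)=\{x_0^*\}$. Here $x_0^*$ is the derivative, i.e.
\[
\|x_0+ty\|=1+t\,x_0^*(y)+o(t)\quad\text{uniformly in } y\in S_{\mathbb{X}}.
\]
Suppose $\{x_n^*\}\subset B_{\mathbb{X}^*}$ satisfies $x_n^*(x_0)\to1$ but $\|x_n^*-x_0^*\|>\varepsilon$ for infinitely many $n$. Passing to that subsequence, choose $y_n\in S_{\mathbb{X}}$ with $(x_n^*-x_0^*)(y_n)>\varepsilon$. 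For $t>0$,
\[
\|x_0+ty_n\|-1-t\,x_0^*(y_n)\ \ge\ x_n^*(x_0)-1+t\,(x_n^*-x_0^*)(y_n)\ \ge\ -(1-x_n^*(x_0))+t\varepsilon .
\]
Take $t_n=\max\{2(1-x_n^*(x_0))/\varepsilon,\,1/n\}$, so that $t_n\to0$. The right-hand side is then at least $t_n\varepsilon/2$, which contradicts the uniform $o(t)$ estimate along $(t_n,y_n)$.

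\emph{Converse direction.} Let $x_0^*$ be the unique supporting functional at $x_0$ with the stated exposing property. Suppose the norm is not Fr\'echet differentiable at $x_0$ with derivative $x_0^*$. Then there exist $\varepsilon>0$, $y_n\in S_{\mathbb{X}}$ and $t_n\to0$ with
\[
\bigl|\,\|x_0+t_ny_n\|-1-t_nx_0^*(y_n)\bigr|\ge\varepsilon|t_n| .
\]
The expression inside the absolute value is always $\ge0$, because $\|x_0+ty\|\ge x_0^*(x_0+ty)$. Replacing $y_n$ by $-y_n$ where needed, we may therefore assume $t_n>0$ and drop the absolute value. Pick $x_n^*\in J(x_0+t_ny_n)$. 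Then
\[
x_n^*(x_0)\ \ge\ \|x_0+t_ny_n\|-t_n\ \ge\ 1-2t_n\ \to\ 1,
\]
so by hypothesis $x_n^*\to x_0^*$ in norm. On the other hand, using $x_n^*(x_0)\le1$,
\[
1+t_nx_n^*(y_n)\ \ge\ x_n^*(x_0)+t_nx_n^*(y_n)=\|x_0+t_ny_n\|\ \ge\ 1+t_nx_0^*(y_n)+\varepsilon t_n ,
\]
which gives $\|x_n^*-x_0^*\|\ge(x_n^*-x_0^*)(y_n)\ge\varepsilon$. This is a contradiction.

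The only delicate point is the choice of scale $t_n$ in the forward direction. It must tend to zero, yet be large enough relative to the defect $1-x_n^*(x_0)$ that the gain $t\varepsilon$ dominates; the choice above balances the two.
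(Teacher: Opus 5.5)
Your proof is correct. The paper does not prove this lemma itself. It quotes it from Phelps (\v{S}mulyan's lemma), where ``weak$^*$-strongly exposed'' means norm convergence $\|x_n^*-x_0^*\|\to0$, exactly as you read it. So your argument is a genuinely different route: a direct, elementary proof specialised to the norm. It uses only $\|x_0+ty\|\ge x^*(x_0+ty)$ for $x^*\in B_{\mathbb{X}^*}$, with equality when $x^*\in J(x_0+ty)$. In the forward direction, the scale $t_n=\max\{2(1-x_n^*(x_0))/\varepsilon,\,1/n\}$ is positive, tends to $0$, and makes the gain $t_n\varepsilon$ dominate the defect $1-x_n^*(x_0)$. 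In the converse, the functionals $x_n^*\in J(x_0+t_ny_n)$ satisfy $x_n^*(x_0)\ge 1-2t_n$ and yet $(x_n^*-x_0^*)(y_n)\ge\varepsilon$, which gives the contradiction. The citation presumably buys generality, since Phelps works in the broader setting of convex functions. Your argument buys transparency about which mode of convergence is used, and that matters here. With the definition recalled in the paper's introduction ($x_n^*\xrightarrow{w^*}x^*$), weak$^*$ compactness of $B_{\mathbb{X}^*}$ makes the condition hold at every smooth point. The lemma as literally stated would then fail, e.g.\ at $x_0=(2^{-n})_n\in\ell_1$, where the norm is G\^ateaux but not Fr\'echet differentiable. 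Your norm reading is also what the paper actually uses later: the ``similar arguments'' in part $(ii)$ of Theorem~\ref{operator:main} need the rank-one operators $A_{ij}$ to be absolutely strongly exposing.
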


    We conclude this section by recalling some well-known properties of annihilators that will be used later. To avoid notational confusion with the left and the right annihilators, we reserve the symbol $ \perp $ for Birkhoff-James orthogonality. For any subspace $ \mathbb{Y} $ of $ \mathbb{X}, $ $ R_{Ann}(\mathbb{Y}):= \{ x^* \in \mathbb{X^*}: x^*(x)=0, \forall x\in \mathbb{Y}\} $ denotes the right-annihilator of $ \mathbb{Y}. $ Similarly for any subspace $ \mathbb{Z} $ of $ \mathbb{X^*}, $ $ L_{Ann}(\mathbb{Z}):= \{x\in \mathbb{X}:x^*(x)=0, \forall x^* \in \mathbb{Z}\} $ denotes the left-annihilator of $ \mathbb{Z}. $ Clearly, $ R_{Ann}(\mathbb{Y}) $ and $ L_{Ann}(\mathbb{Z}) $ are subspaces of $ \mathbb{X^*} $ and $ \mathbb{X},$ respectively.
    
\begin{lemma}\label{Rudin}\cite[Th. 4.7]{R}
Let $\mathbb{Y}$ be a subspace of $\mathbb{X}$ and let $\mathbb{Z}$ be a subspace of $\mathbb{X}^*.$ Then
   \begin{itemize}
       \item [$(i)$] $ L_{Ann}( R_{Ann} (\mathbb{Y}))= \overline{\mathbb{Y}}. $
       \item [$(ii)$] $R_{Ann} ( L_{Ann} (\mathbb{Z}))= \overline{\mathbb{Z}}^{w^*}. $
       \item[$(iii)$]  $\mathbb{Y}^*= \mathbb{X}^* / \overline{\mathbb{Y}}^{w*}.$ 
       
   \end{itemize}
\end{lemma}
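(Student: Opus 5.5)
We prove the three parts separately. Parts $(i)$ and $(ii)$ are Hahn--Banach separation arguments, carried out in the norm topology of $\mathbb{X}$ and in the weak* topology of $\mathbb{X}^*$ respectively. Part $(iii)$ is the identification of $\mathbb{Y}^*$ with a quotient of $\mathbb{X}^*$ through the restriction map.

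As literally printed, $(iii)$ reads $\mathbb{X}^*/\overline{\mathbb{Y}}^{w^*}$. That does not parse, since $\mathbb{Y}\subset\mathbb{X}$ is not a subspace of $\mathbb{X}^*$. I therefore read $(iii)$ as the standard identity
\[
\mathbb{Y}^*\cong \mathbb{X}^*/R_{Ann}(\mathbb{Y})
\]
isometrically. Here $R_{Ann}(\mathbb{Y})$ is weak*-closed, being an intersection of kernels of the evaluations $x^*\mapsto x^*(y)$, so it equals its own weak* closure. This is the version I will prove.

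For $(i)$, the inclusion $\overline{\mathbb{Y}}\subseteq L_{Ann}(R_{Ann}(\mathbb{Y}))$ holds because $L_{Ann}$ of any set is an intersection of kernels of continuous functionals, hence norm-closed, and it clearly contains $\mathbb{Y}$. For the reverse inclusion, take $x\notin\overline{\mathbb{Y}}$. The Hahn--Banach theorem gives $x^*\in\mathbb{X}^*$ with $x^*|_{\overline{\mathbb{Y}}}=0$ and $x^*(x)=\operatorname{dist}(x,\overline{\mathbb{Y}})>0$. Then $x^*\in R_{Ann}(\mathbb{Y})$ and $x^*(x)\neq0$, so $x\notin L_{Ann}(R_{Ann}(\mathbb{Y}))$.

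For $(ii)$, the inclusion $\overline{\mathbb{Z}}^{w^*}\subseteq R_{Ann}(L_{Ann}(\mathbb{Z}))$ holds because $R_{Ann}$ of any set is weak*-closed and it contains $\mathbb{Z}$. The reverse inclusion is the one step needing care. Take $x_0^*\notin\overline{\mathbb{Z}}^{w^*}$. Since $(\mathbb{X}^*,w^*)$ is a locally convex space, the Hahn--Banach separation theorem applies to the closed subspace $\overline{\mathbb{Z}}^{w^*}$ and the point $x_0^*$. The key fact is that the dual of $(\mathbb{X}^*,w^*)$ is exactly $\psi(\mathbb{X})$, so the separating continuous functional is evaluation at some $x\in\mathbb{X}$. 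Separating a subspace from a point forces this functional to vanish on the subspace, so $z^*(x)=0$ for all $z^*\in\mathbb{Z}$ and $x_0^*(x)\neq0$. Hence $x\in L_{Ann}(\mathbb{Z})$ and $x_0^*\notin R_{Ann}(L_{Ann}(\mathbb{Z}))$.

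For $(iii)$, consider the restriction map $T:\mathbb{X}^*\to\mathbb{Y}^*$, $Tx^*=x^*|_{\mathbb{Y}}$. It is linear with $\|T\|\le1$ and $\ker T=R_{Ann}(\mathbb{Y})$. It is surjective because, by Hahn--Banach, every $y^*\in\mathbb{Y}^*$ has a norm-preserving extension $x^*$. The induced map $\widetilde T:\mathbb{X}^*/R_{Ann}(\mathbb{Y})\to\mathbb{Y}^*$ is therefore a linear bijection, and we check it is an isometry:
\[
\|\widetilde T[x^*]\|=\|x^*|_{\mathbb{Y}}\|\le \|x^*+w^*\|\quad\text{for all } w^*\in R_{Ann}(\mathbb{Y}),
\]
so $\|\widetilde T[x^*]\|\le\|[x^*]\|$. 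Conversely, a norm-preserving extension $u^*$ of $x^*|_{\mathbb{Y}}$ satisfies $u^*-x^*\in R_{Ann}(\mathbb{Y})$, which gives $\|[x^*]\|\le\|u^*\|=\|x^*|_{\mathbb{Y}}\|$.
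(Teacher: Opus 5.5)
Your argument is correct and is essentially the standard proof in the cited source. The paper gives no proof of its own and instead refers to Rudin's Theorem 4.7, whose proofs of $(i)$ and $(ii)$ are exactly yours: Hahn--Banach in the norm topology, and separation in the locally convex space $(\mathbb{X}^*,w^*)$, whose dual is $\psi(\mathbb{X})$. Your repair of $(iii)$ to $\mathbb{Y}^*\cong \mathbb{X}^*/R_{Ann}(\mathbb{Y})$ is the sensible reading; it is Rudin's Theorem 4.9(a), and item $(iii)$ is not used elsewhere in the paper. Your restriction-map isometry argument for it is the standard one.
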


\section{Main Results}

\section*{Section-I}

\section*{Ball covering property of $\mathbb{L}(\mathbb{X}, \mathbb{Y}).$}

We begin by noting that in \cite[Th. 3.2]{LLLZ}, a necessary condition was obtained for the BCP of the operator space $\mathbb{L}(\mathbb{X}, \mathbb{Y})$ in terms of the BCP of $ \mathbb{X}^{*} $ and $ \mathbb{Y}. $
\begin{theorem} \cite[Th. 3.2]{LLLZ}\label{JFA}
		Let $\mathbb{X}$ and $\mathbb{Y}$ be Banach spaces. If $\mathbb{L}(\mathbb{X}, \mathbb{Y})$ has the BCP, then both
		$\mathbb{X}^*$ and $\mathbb{Y}$ have the BCP.
	\end{theorem}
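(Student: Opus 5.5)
We argue by contraposition, using the Lemma from the introduction: a Banach space $\mathbb{Z}$ has the BCP if and only if some countable set of non-zero vectors $\{z_i\}$ satisfies $\cap_i z_i^\perp=\{0\}$. So assume $\mathbb{Y}$ (respectively $\mathbb{X}^*$) fails the BCP. Let $\{T_i: i\in\mathbb{N}\}\subset \mathbb{L}(\mathbb{X},\mathbb{Y})\setminus\{0\}$ be an arbitrary countable family. We will construct a non-zero rank-one operator $S$ with $T_i\perp_B S$ for every $i$. By the Lemma, this shows that $\mathbb{L}(\mathbb{X},\mathbb{Y})$ fails the BCP.

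\emph{Case of $\mathbb{Y}$.} For each $i$, choose $x_{i,k}\in S_{\mathbb{X}}$ with $\|T_ix_{i,k}\|\to\|T_i\|$ as $k\to\infty$. Discarding finitely many indices, we may assume each $y_{i,k}:=T_ix_{i,k}$ is non-zero. The family $\{y_{i,k}\}$ is countable and $\mathbb{Y}$ fails the BCP, so the Lemma gives $y\neq 0$ with $y_{i,k}\perp_B y$ for all $i,k$. Fix any $f\in S_{\mathbb{X}^*}$ and put $S=f\otimes y$, that is, $Sx=f(x)y$; clearly $S\neq0$. For every $\lambda\in\mathbb{R}$,
\[
\|T_i+\lambda S\|\ \ge\ \|T_ix_{i,k}+\lambda f(x_{i,k})\,y\|\ \ge\ \|T_ix_{i,k}\|,
\]
where the last inequality uses $y_{i,k}\perp_B y$ with the scalar $\lambda f(x_{i,k})$. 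Letting $k\to\infty$ gives $\|T_i+\lambda S\|\ge\|T_i\|$, so $T_i\perp_B S$ for all $i$.

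\emph{Case of $\mathbb{X}^*$.} We dualize the previous argument through adjoints. Since $\|T_i^*\|=\|T_i\|$, choose $y^*_{i,k}\in S_{\mathbb{Y}^*}$ with $\|T_i^*y^*_{i,k}\|\to\|T_i\|$, and again assume all $T_i^*y^*_{i,k}\neq 0$. As $\mathbb{X}^*$ fails the BCP, the Lemma yields $g\in\mathbb{X}^*\setminus\{0\}$ with $T_i^*y^*_{i,k}\perp_B g$ for all $i,k$. Fix $y_0\in S_{\mathbb{Y}}$ and set $S=g\otimes y_0$, that is, $Sx=g(x)y_0$. Then $S^*y^*=y^*(y_0)\,g$, and therefore
\[
\|T_i+\lambda S\|=\|T_i^*+\lambda S^*\|\ \ge\ \|T_i^*y^*_{i,k}+\lambda\, y^*_{i,k}(y_0)\,g\|\ \ge\ \|T_i^*y^*_{i,k}\|\ \longrightarrow\ \|T_i\|.
\]
Hence $T_i\perp_B S$ for all $i$.

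In both cases the countable family $\{T_i\}$ was arbitrary, so no countable family of non-zero operators has trivial common orthogonal set. By the Lemma, $\mathbb{L}(\mathbb{X},\mathbb{Y})$ does not have the BCP, which proves the theorem. The only delicate point is that operator norms need not be attained. We handle this by orthogonalizing $y$ (respectively $g$) against a whole countable norming sequence $\{y_{i,k}\}_k$ for each $T_i$ rather than a single vector, and this costs nothing because the family $\{y_{i,k}\}_{i,k}$ remains countable.
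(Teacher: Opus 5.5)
Your proof is correct. The paper gives no proof of this statement; it is quoted from \cite{LLLZ}, so there is no internal argument to match. You instead derive the result from the paper's own characterization, Lemma~\ref{lemma:main}. You apply it in contrapositive form to $\mathbb{Y}$ (resp.\ $\mathbb{X}^*$), and then once more to the operator space.

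The key steps check out. The orthogonality $y_{i,k}\perp_B y$ is used with the scalar $\lambda f(x_{i,k})$. In the dual case, $T_i^*y^*_{i,k}\perp_B g$ is used with $\lambda\, y^*_{i,k}(y_0)$, via $S^*y^*=y^*(y_0)g$ and $\|T^*\|=\|T\|$. Non-attainment of $\|T_i\|$ is handled by orthogonalizing against a whole norming sequence, which keeps the family countable.

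Your argument is the mirror image of the proof of Theorem~\ref{operator:main}. There, rank-one operators are the centers, and strongly exposing functionals (through Lemma~\ref{BSP2}) are needed to describe their orthogonal sets. Here, a rank-one operator is the common orthogonal witness, and only the trivial bound $\|T+\lambda S\|\geq\|(T+\lambda S)x\|$ is used, so no geometric hypothesis is required.

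A genuine bonus is that, since the witness $S$ is rank one, the same argument shows more. Any subspace $\mathcal{W}\supseteq\mathbb{F}(\mathbb{X},\mathbb{Y})$ with the BCP forces $\mathbb{X}^*$ and $\mathbb{Y}$ to have the BCP. This is exactly the necessity invoked in Theorem~\ref{theorem:characterization} for general $\mathcal{W}$. It does not follow formally from the statement as quoted, which concerns only $\mathbb{L}(\mathbb{X},\mathbb{Y})$, and there is no general reason for the BCP of $\mathcal{W}$ to pass to $\mathbb{L}(\mathbb{X},\mathbb{Y})$.
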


Motivated by the above result, our goal is to investigate the converse of the same. The following lemma, that characterizes the BCP of a Banach space in terms of Birkhoff-James orthogonality properties, is important for our purpose. 
\begin{lemma}\label{lemma:main}
 	
 	Let $ \mathbb{X} $ be a Banach space. Then the following are equivalent:
 	
 	\begin{itemize}
 		\item[$(i)$] $\mathbb{X}$ has the BCP,
 		
 		\item[$(ii)$] There exists a countable set $\{ x_i : i \in \mathbb{N}\} \subset \mathbb{X}\setminus\{0\} $ such that $\cap_{i \in \mathbb{N}}x_i^- = \{0\}, $
 		
 		\item[$(iii)$] There exists a countable set $  \{ y_i : i \in \mathbb{N}\} \subset \mathbb{X}\setminus\{0\} $ such that $\cap_{i \in \mathbb{N}}y_i^+ = \{0\}, $
 		
 		\item[$(iv)$] There exists a countable set $  \{ z_i : i \in \mathbb{N}\} \subset \mathbb{X} \setminus \{0\} $ such that $\cap_{i \in \mathbb{N}}z_i^\perp = \{0\} .$

 	\end{itemize}
 	
 \end{lemma}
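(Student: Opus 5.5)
We will prove $(i)\Leftrightarrow(ii)$ directly, obtain $(ii)\Leftrightarrow(iii)$ by a sign change, and then handle $(iv)$. The basic observation is a description of when a ball centred at a positive multiple of $x$ avoids the origin. Let $x\neq 0$ and $y\in S_{\mathbb{X}}$. We claim that $y\notin x^-$ if and only if there exist $\lambda>0$ and $r\le \lambda\|x\|$ with $y\in B(\lambda x,r)$.

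Indeed, $y\notin x^-$ means $\|x-\mu y\|<\|x\|$ for some $\mu>0$. Dividing by $\mu$ and putting $\lambda=1/\mu$ gives $\|\lambda x - y\|<\lambda\|x\|$. So $y\in B(\lambda x,\lambda\|x\|)$, and this ball does not contain $0$. Conversely, if $y\in B(\lambda x,r)$ with $r\le\lambda\|x\|$, then $\|\lambda x-y\|<\lambda\|x\|$. Hence $\|x-\tfrac1\lambda y\|<\|x\|$, so $y\notin x^-$. This is exactly the content of \cite[Th. 2.1]{Sain25}, and we may cite it, but the direct computation above is enough.

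\textbf{$(i)\Rightarrow(ii)$.} Take a covering $S_{\mathbb{X}}\subseteq\bigcup_n B(x_n,r_n)$ with $r_n\le\|x_n\|$, and write $x_n=\|x_n\|\cdot(x_n/\|x_n\|)$. Each $y\in S_{\mathbb{X}}$ lies in some ball, so by the observation $y\notin x_n^-$ for that $n$. Since each $x^-$ is a cone, closed under nonnegative scalars, any nonzero $u$ in $\bigcap_n x_n^-$ could be normalised to a point of $S_{\mathbb{X}}$ in the same intersection, which is impossible. Hence $\bigcap_n x_n^-=\{0\}$.

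\textbf{$(ii)\Rightarrow(i)$.} Let $y\in S_{\mathbb{X}}$. Some $x_i$ has $y\notin x_i^-$, so $y\in B(\lambda x_i,\lambda\|x_i\|)$ for some $\lambda>0$. The difficulty is that $\lambda$ depends on $y$, while we need countably many balls. Note first that $\|\lambda x-y\|<\lambda\|x\|$ persists for all larger $\lambda$, since $t\mapsto\|tx-y\|-t\|x\|$ is nonincreasing by the triangle inequality. So $\lambda$ may be taken to be an integer $k$. The countable family $\{B(kx_i,k\|x_i\|):i,k\in\mathbb{N}\}$ therefore covers $S_{\mathbb{X}}$, and none of these balls contains $0$. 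This monotonicity step is the key technical point and the one I would check most carefully.

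\textbf{$(ii)\Leftrightarrow(iii)$.} We have $u\in x^-$ if and only if $-u\in x^+$, and also $u\in x^-$ if and only if $u\in(-x)^+$. Taking $y_i=-x_i$ gives the equivalence.

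\textbf{$(ii)\Rightarrow(iv)$.} This is immediate with $z_i=x_i$, since $x^\perp\subseteq x^-$.

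\textbf{$(iv)\Rightarrow(ii)$.} Take the countable set $\{z_i\}\cup\{-z_i\}$. Since $(-z)^-=z^+$, we get
\[
\bigcap_i z_i^-\cap\bigcap_i(-z_i)^-=\bigcap_i\bigl(z_i^-\cap z_i^+\bigr)=\bigcap_i z_i^\perp=\{0\}.
\]
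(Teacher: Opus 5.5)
Your proof is correct and follows essentially the same route as the paper. The shared ingredients are the translation ``$y\notin x^-$ iff $y\in B(\lambda x,\lambda\|x\|)$ for some $\lambda>0$'', the passage to integer multiples to get a countable family, and the sign identity $(-x)^+=x^-$. Your detour $(iv)\Rightarrow(ii)\Rightarrow(i)$ through the symmetrized set $\{\pm z_i\}$ yields exactly the paper's family $\{B(\pm n z_i, n\|z_i\|)\}$, and your monotonicity of $t\mapsto\|tx-y\|-t\|x\|$ supplies the step the paper calls ``easy to verify''.
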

 
 \begin{proof}
 	$ (i) \implies (ii): $ Since $\mathbb{X}$ has the BCP, there exists a countable set $\{ x_i : i \in \mathbb{N}\}\subset \mathbb{X} \setminus \{0\} $ such that $ S_{\mathbb{X}} \subset \cup_{i \in \mathbb{N}} B(x_i, r_{x_i}), $ where $ 0< r_{x_i}\leq \|x_i\| $ for each $ i \in \mathbb{N}. $ So, for any $z \in S_{\mathbb{X}},$ there exists $ j \in \mathbb{N}$ such that $z\in B(x_j, r_{x_j}). $ This implies that $ \| x_j -z \|< \|x_j \|, $ which is equivalent to $ z \not\in x_j^-. $ Therefore, $ z \not\in \cap_{i\in \mathbb{N}}x_i^-. $ Since $ z \in S_\mathbb{X} $ is arbitrary, it follows that $ S_{\mathbb{X}} \bigcap (\cap_{i\in \mathbb{N}}x_i^-) = \phi.$ Now using the homogeneity of the Birkhoff-James orthogonality, it is easy to verify that $ \cap_{i\in \mathbb{N}} x_i^- = \{0\}.$  \\
 	
 	
 	 $(ii) \implies (iii):$ Follows trivially by choosing $ y_i = - x_i, $ for each $ i \in \mathbb{N}. $\\
 	
 	$ (iii) \implies (iv):$ Follows immediately from the fact that for any $x \in \mathbb{X},$ $x^\perp \subset x^{+}.$\\
 	 
 	 $ (iv) \implies (i): $ Let $y \in S_{\mathbb{X}}. $ As $\cap_{i \in \mathbb{N}} z_i^\perp = \{0\}, $ there exists $ k \in \mathbb{N}$ such that $ y \notin z_k^\perp. $ This implies that there exists a scalar $\lambda_y $ such that $\| z_k + \lambda_y  y \| < \|z_k\|. $ Suppose that $ \lambda_y < 0. $ Let $ \lambda_y= -\frac{1}{t_y} $ for some $ t_y>0. $  This implies that $ \|t_y z_k - y \| < \|t_y z_k\|. $ Choose any $ n_0 \in \mathbb{N}$ with $ n_0 \geq t_y. $ It is easy to verify that $ y \in B(n_0z_k, n_0 \|z_k\|).$ Similarly, whenever $\lambda_y > 0, $ there exists $n_1 \in \mathbb{N}$ such that $ y \in B(n_1(-z_k), n_1 \|z_k\|).$
    Now, consider the countable collection of balls,
 	 \[
 	 \mathcal{B'}= \bigg\{ B(n z_i, n\|z
 	 _i\|):  i, n \in \mathbb{N} \bigg\} \bigcup  \bigg\{ B(-n z_i, n \|z_i\|): i, n \in \mathbb{N} \bigg\}.
 	 \]
 	Clearly, $ y \in \mathcal{B'}. $ Since $ y \in S_{\mathbb{X}} $ is arbitrary, it follows that $ S_{\mathbb{X}} \subset \mathcal{B'}. $ This completes the lemma.
 	
 \end{proof}

	

  \begin{remark}
In the reflexive and smooth setting, the characterization of the BCP admits a natural dual reformulation. In this case, $\mathbb{X}$ has the BCP if and only if there exists a countable family $\{f_i\}_{i\in\mathbb N} \subset \mathbb{X}^*$ such that $\overline{\mathrm{span}\{f_i : i\in\mathbb N\}}^{\,w^*} = \mathbb{X}^*$. Observe that if $\mathbb{Z} = \mathrm{span}\{f_i : i\in\mathbb N\}$, then by Lemma~\ref{Rudin}, we have $L_{Ann}(\mathbb{Z}) = \bigcap_{i\in\mathbb N} \ker f_i$ and $R_{Ann}(L_{Ann}(\mathbb{Z})) = \overline{\mathbb{Z}}^{\,w^*}$. Consequently, $ \mathbb{X} $ has the BCP implies that $\overline{\mathbb{Z}}^{\,w^*} = \mathbb{X}^*. $ On the other hand, $\overline{\mathbb{Z}}^{\,w^*} = \mathbb{X}^* $ implies that  $\bigcap_{i\in\mathbb N} \ker f_i = \{0\}$. Since $\mathbb{X}$ is smooth, for each $f_i$ there exists a unique $x_i \in \mathbb{X}$ such that $J(x_i)=\{f_i\}$, and in this case $x_i^\perp = \ker f_i$. Consequently, $\bigcap_{i\in\mathbb N} x_i^\perp = \{0\}$, which is precisely the characterization of the BCP given in Lemma~\ref{lemma:main}. 
\end{remark}


  As mentioned earlier, our objective is to investigate the opposite direction of Theorem \ref{JFA}. For this purpose, we require the notion of absolutely strongly exposing operators. An operator $T \in \mathbb{L}(\mathbb{X}, \mathbb{Y})$ is said to be an absolutely strongly exposing operator (see \cite{Jung} for more information) if there exists $x_0 \in B_{\mathbb{X}}$ such that whenever a sequence $\{x_n\} \subset B_{\mathbb{X}}$ satisfies $\|Tx_n\| \to \|T\|,$ then there exists a sequence $\{\theta_n\} \subset \{ \pm 1 \}$ such that $\theta_n x_n \to x_0.$  It is trivial to see that for such an operator $T,$ $M_T = \{\pm x_0\}.$ 
    
The set of absolutely strongly exposing operators of $\mathbb{L}(\mathbb{X}, \mathbb{Y})$ is denoted as $ASE(\mathbb{X}, \mathbb{Y})$. The following lemma shows that for absolutely strongly exposing operators, Birkhoff-James orthogonality can be characterized in a particularly nice form.

	 \begin{lemma}\label{BSP2}\cite[Th. 3.33]{SGSP}
Let $\mathbb{X}$ and $\mathbb{Y}$ be Banach spaces. Suppose that $T \in ASE(\mathbb{X}, \mathbb{Y})$ with $M_T = \{\mu x_0: |\mu|=1\}.$  Then for any $A \in S_{\mathbb{L}(\mathbb{X}, \mathbb{Y})}$, 
		$T \perp_B A$ if and only if 
		$Tx_0 \perp_B Ax_0.$
		
	\end{lemma}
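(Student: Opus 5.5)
The plan is to prove the two implications separately. The easy direction uses only the definition of Birkhoff–James orthogonality. The converse argues through supporting functionals of $Tx_0$, produced as weak* cluster points, and then Theorem~\ref{J}. If $T=0$ both sides hold trivially, so we assume $T\neq 0$. Then $x_0\in S_{\mathbb{X}}$ and $\|Tx_0\|=\|T\|$.

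\emph{($\Leftarrow$).} Suppose $Tx_0\perp_B Ax_0$. For every $\lambda\in\mathbb{R}$,
\[
\|T+\lambda A\|\geq \|(T+\lambda A)x_0\|=\|Tx_0+\lambda Ax_0\|\geq \|Tx_0\|=\|T\|.
\]
Hence $T\perp_B A$. This direction does not use the absolute strong exposedness.

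\emph{($\Rightarrow$).} Suppose $T\perp_B A$ and put $\lambda_n=1/n$.

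1. Since $\|T+\lambda_nA\|\geq\|T\|$, choose $x_n\in S_{\mathbb{X}}$ with $\|(T+\lambda_nA)x_n\|\geq \|T\|-\lambda_n/n$.

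2. Choose $y_n^*\in J((T+\lambda_nA)x_n)$. Then
\[
y_n^*(Tx_n)+\lambda_n y_n^*(Ax_n)\geq \|T\|-\lambda_n/n .
\]

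3. Because $y_n^*(Tx_n)\leq\|T\|$, step 2 gives $y_n^*(Ax_n)\geq -1/n$.

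4. Also $\|Tx_n\|\geq y_n^*(Tx_n)\geq \|T\|-\lambda_n\|A\|-\lambda_n/n$. Hence $y_n^*(Tx_n)\to\|T\|$, and in particular $\|Tx_n\|\to\|T\|$.

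5. By the definition of $T\in ASE(\mathbb{X},\mathbb{Y})$, there are signs $\theta_n\in\{\pm1\}$ with $\theta_nx_n\to x_0$.

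Next set $z_n^*=\theta_ny_n^*\in S_{\mathbb{Y}^*}$. Since $\theta_n^2=1$, we have $z_n^*(T(\theta_nx_n))=y_n^*(Tx_n)$ and $z_n^*(A(\theta_nx_n))=y_n^*(Ax_n)$. Using $\|\theta_nx_n-x_0\|\to0$ we obtain
\[
z_n^*(Tx_0)\to\|T\| \quad\text{and}\quad \liminf_n z_n^*(Ax_0)\geq 0 .
\]
By Banach–Alaoglu, the sequence $\{z_n^*\}$ has a weak* cluster point $f\in B_{\mathbb{Y}^*}$, taken along a subnet. It satisfies $f(Tx_0)=\|T\|=\|Tx_0\|$, so $f\in J(Tx_0)$, and $f(Ax_0)\geq 0$.

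We repeat the whole argument with $\lambda_n=-1/n$. This yields $g\in J(Tx_0)$ with $g(Ax_0)\leq 0$.

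The set $J(Tx_0)$ is convex and the map $h\mapsto h(Ax_0)$ is affine. So some convex combination $h=tf+(1-t)g\in J(Tx_0)$ satisfies $h(Ax_0)=0$. Theorem~\ref{J} then gives $Tx_0\perp_B Ax_0$.

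The main obstacle is the transfer from the near-norming vectors $x_n$ to the fixed vector $x_0$. A direct norm estimate such as $\|(T+\lambda A)\theta_nx_n\|\leq\|(T+\lambda A)x_0\|+\ldots$ fails, because the error $\|T\|\,\|\theta_nx_n-x_0\|$ is not small relative to $\lambda_n$. Passing to the supporting functionals $y_n^*$ avoids this. After dividing by $\lambda_n$, only the sign information $y_n^*(Ax_n)\gtrsim 0$ survives, and that information passes to the limit. Weak* compactness then supplies the supporting functionals of $Tx_0$.
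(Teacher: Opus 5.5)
Your proof is correct. The paper does not prove this lemma; it quotes it from \cite[Th.~3.33]{SGSP}. So there is no in-paper argument to match, and what you wrote is a valid self-contained proof of the cited result. Two small repairs are needed:
(a) discard the finitely many $n$ with $1/n^2\ge\|T\|$, so that $(T+\lambda_nA)x_n\neq 0$ and $J((T+\lambda_nA)x_n)$ makes sense;
(b) when you ``repeat the whole argument with $\lambda_n=-1/n$'', the slack in Step~1 must be $|\lambda_n|/n$. Read literally, $\|T\|-\lambda_n/n=\|T\|+1/n^2$, which may exceed $\|T+\lambda_nA\|$. The cleanest fix is to run the first half for $-A$, using $T\perp_B A\Rightarrow T\perp_B(-A)$; this gives $g\in J(Tx_0)$ with $g(Ax_0)\le 0$.

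Your convex-combination finish via Theorem~\ref{J} is fine. Alternatively, for $\lambda\ge 0$ one has $\|Tx_0+\lambda Ax_0\|\ge f(Tx_0)+\lambda f(Ax_0)\ge\|Tx_0\|$, and the same estimate with $g$ handles $\lambda\le 0$; this gives $Tx_0\perp_B Ax_0$ directly.

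This also shows what your route buys beyond the cited statement. Each half of your argument uses only one sign of $\lambda$, so you obtain the one-sided refinement $A\in T^{\pm}\iff Ax_0\in(Tx_0)^{\pm}$. It holds for every $A\in\mathbb{L}(\mathbb{X},\mathbb{Y})$, not only $A$ in the unit sphere. It needs nothing beyond the sequential definition of absolute strong exposure and weak* compactness of $B_{\mathbb{Y}^*}$. You correctly used cluster points along subnets rather than subsequences, so no separability of $\mathbb{Y}$ is needed.

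Your diagnosis of the obstacle is right. The error term $\|T\|\,\|\theta_nx_n-x_0\|$ is not small relative to $\lambda_n$, so a direct norm comparison at $x_0$ fails. Passing through supporting functionals works because only the sign information $y_n^*(Ax_n)\ge -1/n$ has to survive the limit.
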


We are now in a position to state the main theorem of this article.

    \begin{theorem}\label{operator:main}
Let $\mathbb{X}, \mathbb{Y}$ be Banach spaces and let $\mathcal{W}$ be a subspace of $\mathbb{L}(\mathbb{X}, \mathbb{Y})$ containing $\mathbb{F}(\mathbb{X}, \mathbb{Y}).$ Then  $\mathcal{W}$ has the BCP if  one of the following holds: 
\begin{itemize}
    \item[$(i)$]  $\mathbb{Y}$ has the BCP and there exists a countable set of strongly exposed points of $B_{\mathbb{X}}$ separating $\mathbb{X}^*.$

    \item[$(ii)$]  $\mathbb{X}^*$ has the BCP and there exists a countable set of weak*-strongly exposed points of $B_{\mathbb{Y}^*}$ separating $\mathbb{Y}^{**}.$
\end{itemize}
    \end{theorem}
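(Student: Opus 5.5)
The plan is to apply Lemma~\ref{lemma:main} $(iv)$ to $\mathcal{W}$. In each case I will build a countable family of rank-one operators $\{T_{jk}\} \subset \mathbb{F}(\mathbb{X},\mathbb{Y}) \subset \mathcal{W}$ and show $\cap_{j,k} T_{jk}^\perp = \{0\}$ in $\mathcal{W}$. Birkhoff-James orthogonality in $\mathcal{W}$ only involves norms of elements of $\mathcal{W}$, which carry the operator norm. So it suffices to prove: if $A \in \mathbb{L}(\mathbb{X},\mathbb{Y})$ satisfies $T_{jk}\perp_B A$ for all $j,k$, then $A=0$.

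\emph{Case $(i)$.} Let $\{z_j\}$ be strongly exposed points of $B_{\mathbb{X}}$ separating $\mathbb{X}^*$, with $z_j^*$ strongly exposing $z_j$. By Lemma~\ref{lemma:main}, since $\mathbb{Y}$ has the BCP, pick $\{y_k\}\subset \mathbb{Y}\setminus\{0\}$ with $\cap_k y_k^\perp=\{0\}$. Put $T_{jk}x = z_j^*(x)\,y_k$.
\begin{itemize}
\item[$\bullet$] \emph{$T_{jk}$ is absolutely strongly exposing.} We have $\|T_{jk}x\|=|z_j^*(x)|\|y_k\|$ and $\|T_{jk}\|=\|y_k\|$. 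If $\|T_{jk}x_n\|\to\|T_{jk}\|$, then $|z_j^*(x_n)|\to 1$. Choosing $\theta_n=\pm1$ with $z_j^*(\theta_nx_n)\to1$, strong exposedness gives $\theta_nx_n\to z_j$. Hence $T_{jk}\in ASE(\mathbb{X},\mathbb{Y})$ with $M_{T_{jk}}=\{\pm z_j\}$.
\item[$\bullet$] \emph{Transfer of orthogonality.} Lemma~\ref{BSP2}, after normalizing $A$ (orthogonality is homogeneous), gives $T_{jk}\perp_B A \iff y_k\perp_B Az_j$.
\item[$\bullet$] \emph{Conclusion.} If $A\in\cap T_{jk}^\perp$, then $Az_j\in\cap_k y_k^\perp=\{0\}$ for every $j$. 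Hence for every $y^*\in\mathbb{Y}^*$ the functional $y^*\circ A\in\mathbb{X}^*$ vanishes at all $z_j$. Since the $z_j$ separate $\mathbb{X}^*$, $y^*\circ A=0$ for all $y^*$, and Hahn--Banach gives $A=0$.
\end{itemize}

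\emph{Case $(ii)$.} Let $\{g_j\}$ be weak*-strongly exposed points of $B_{\mathbb{Y}^*}$ separating $\mathbb{Y}^{**}$, with $g_j$ weak*-strongly exposed by $y_j\in S_{\mathbb{Y}}$. Then $g_j \in J(y_j)$. I also expect $J(y_j)=\{g_j\}$: any $h\in J(y_j)$ gives the constant sequence $h$ with $h(y_j)=1$, so $h=g_j$. Lemma~\ref{lemma:Frechet differentiable} then says the norm is Fr\'echet differentiable at $y_j$ with derivative $g_j$. By the BCP of $\mathbb{X}^*$ choose $\{f_k\}\subset\mathbb{X}^*\setminus\{0\}$ with $\cap_k f_k^\perp=\{0\}$, and set $T_{jk}x=f_k(x)y_j$, so $\|T_{jk}\|=\|f_k\|$. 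These operators need not be absolutely strongly exposing, because the exposition is only weak*. So Lemma~\ref{BSP2} cannot be used, and I will prove directly that
\[
T_{jk}\perp_B A \implies f_k\perp_B A^*g_j .
\]
This is the main obstacle, and I would argue by contradiction as follows.
\begin{itemize}
\item[$\bullet$] \emph{Reduction.} Suppose $f_k\not\perp_B A^*g_j$. By convexity of $t\mapsto\|f_k+tA^*g_j\|$, replacing $A$ by $-A$ if needed, there are $c>0$ and $t_0>0$ with $\|f_k+tA^*g_j\|\le\|f_k\|-ct$ for $0<t<t_0$.
\item[$\bullet$] \emph{Estimate of $\|T_{jk}+tA\|$.} Fix a small $\delta>0$ and take $x\in B_{\mathbb{X}}$, $s=f_k(x)$.
\begin{itemize}
\item[$-$] If $|s|\le\|f_k\|-\delta$, then trivially $\|sy_j+tAx\|\le\|f_k\|-\delta+t\|A\|$.
\item[$-$] If $|s|>\|f_k\|-\delta$, uniform Fr\'echet differentiability at $y_j$ (the increment $tAx/s$ has norm $\le t\|A\|/(\|f_k\|-\delta)$, uniformly in $x$) gives
\[
\|sy_j+tAx\|\le |s|+\operatorname{sign}(s)\,t\,(A^*g_j)(x)+o(t)\le \|f_k+tA^*g_j\|+o(t),
\]
with $o(t)$ uniform in such $x$.
\end{itemize}
\item[$\bullet$] \emph{Contradiction.} Taking the supremum over $x$ yields $\|T_{jk}+tA\|<\|f_k\|=\|T_{jk}\|$ for small $t>0$, contradicting $T_{jk}\perp_B A$.
\end{itemize}

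Granting this implication, $A\in\cap T_{jk}^\perp$ forces $A^*g_j\in\cap_k f_k^\perp=\{0\}$ for all $j$. Thus $g_j(Ax)=0$ for every $x$ and every $j$. Since the $g_j$ separate $\mathbb{Y}^{**}\supseteq\psi(\mathbb{Y})$, we get $Ax=0$ for every $x$, so $A=0$, and Lemma~\ref{lemma:main} completes the proof.
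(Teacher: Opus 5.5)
Your case $(i)$ is the paper's argument: rank-one operators built from the strongly exposing functionals and the BCP vectors of $\mathbb{Y}$, absolute strong exposure, Lemma~\ref{BSP2}, then separation. Your choice of signs $\theta_n$ is slightly cleaner than the paper's passage to a subsequence, because membership in $ASE(\mathbb{X},\mathbb{Y})$ is a condition on the whole sequence.

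Case $(ii)$ is correct, but it takes a different route from the paper.

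\emph{The paper's route.} It dualizes. It works with $T_{jk}^*\colon\mathbb{Y}^*\to\mathbb{X}^*$, $y^*\mapsto y^*(y_j)\,f_k$. This operator is absolutely strongly exposing in $\mathbb{L}(\mathbb{Y}^*,\mathbb{X}^*)$, with norming set $\{\pm g_j\}$, because $\psi(y_j)$ strongly exposes $g_j$. The paper reruns the case-$(i)$ argument there to get $\bigcap_{j,k}(T_{jk}^*)^\perp=\{0\}$. It then pulls this back through the isometry $S\mapsto S^*$, which preserves Birkhoff--James orthogonality.

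\emph{Your route.} You stay in $\mathbb{L}(\mathbb{X},\mathbb{Y})$. You convert weak*-strong exposure into Fr\'echet differentiability of the norm at $y_j$ via Lemma~\ref{lemma:Frechet differentiable}. You then prove the one-sided transfer $T_{jk}\perp_B A\Rightarrow f_k\perp_B A^*g_j$ by a first-order estimate, and that estimate checks out:
\begin{itemize}
\item the convexity reduction is valid;
\item the $o(t)$ term is uniform over $|f_k(x)|>\|f_k\|-\delta$;
\item the identity $|s|+\operatorname{sign}(s)\,t\,g_j(Ax)=\operatorname{sign}(s)\,(f_k+tA^*g_j)(x)$ holds.
\end{itemize}
The reverse implication is trivial, since $(T_{jk}+tA)^*g_j=f_k+tA^*g_j$. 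So you have in effect proved a version of Lemma~\ref{BSP2} for $f\otimes y$ with $y$ a Fr\'echet smooth point and $f$ arbitrary.

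\emph{What each buys.} The paper's route is shorter and recycles Lemma~\ref{BSP2}, but it leaves the absolute strong exposure of the adjoints and the resulting orthogonality computation to ``similar arguments''. Yours is self-contained, avoids the dual operator space entirely, and makes explicit exactly where the hypothesis on $\mathbb{Y}$ enters.

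Both routes need ``weak*-strongly exposed'' in the norm-convergence sense. The paper needs it so that $T_{jk}^*\in ASE(\mathbb{Y}^*,\mathbb{X}^*)$, and you need it through Lemma~\ref{lemma:Frechet differentiable}. Under the weak*-convergence wording of the introduction, $y_j$ would only be a smooth point.
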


	\begin{proof}
    
        Suppose that $(i)$ holds true. Let $\mathcal{W} $ be a subspace of $\mathbb{L}(\mathbb{X}, \mathbb{Y})$ containing $\mathbb{F}(\mathbb{X}, \mathbb{Y}).$
		   Let us assume that the set $\{ x_i: i \in \mathbb{N}\} $ separates $\mathbb{X}^*, $ where each $x_i $ is a strongly exposed point of $B_{\mathbb{X}}.$  Suppose that for each $i \in \mathbb{N}, f_i \in S_{\mathbb{X}^*}$ is the corresponding strongly exposing functional.
		    Since $\mathbb{Y}$ has the BCP, it follows from Lemma \ref{lemma:main} that there exists a countable set $\{y_i: i \in \mathbb{N}\}$ such that $\cap_{i \in \mathbb{N}} y_i^\perp = \{0\}.$ Without any loss of generality, we may and do assume that each $y_i \in S_{\mathbb{Y}}.$
		    For any $i, j \in \mathbb{N}$, consider mappings $ T_{ij}: \mathbb{X} \to \mathbb{Y}$ given by 
		    	\begin{eqnarray*}
		    	T_{ij} (x) &:=& f_i(x)  y_j, \quad \forall x \in \mathbb{X}.
		    \end{eqnarray*}
            
		    Clearly, each $T_{ij} $ is linear and $\|T_{ij}\|=\|f_i\|=1$. Moreover, $T_{ij} \in \mathbb{F}(\mathbb{X},\mathbb{Y}) \subset \mathcal{W}.$ From Lemma \ref{lemma:main}, it is clear that to establish the BCP of  $\mathcal{W},$  it suffices to show that $\cap_{i,j \in \mathbb{N}} T_{ij}^\perp = \{0\}.$ Let $A \in \cap_{i,j \in \mathbb{N}} T_{ij}^{\perp}$ be arbitrary but fixed after choice.
		    
		    Observe that for a sequence $\{u_n\} \subset S_{\mathbb{X}}$, 
		     $$\|T_{ij} u_n\| \to \|T_{ij}\|=1 \implies |f_i(u_n)| \| y_j\| \to 1 \implies |f_i(u_n) | \to 1.$$
		 Passing onto a suitable subsequence, if necessary, let us assume without any loss of generality that $f_i(u_n) \to \pm 1.$
		  As $f_i$ is the strongly exposing functional of a strongly exposed point $x_i,$ it is easy to see that $T_{ij} \in ASE(\mathbb{X}, \mathbb{Y})$ and $M_{T_{ij}}= \{ \pm x_i\}.$ Since $T_{ij} \perp_B A,$ it follows from Lemma \ref{BSP2} that
		       $$T_{ij} x_i \perp_B Ax_i \implies y_j \perp_B Ax_i.$$ 
		       For a fixed $i \in \mathbb{N},$ we get $ Ax_i \in y_j^\perp,$ for any $j \in \mathbb{N}.$ As $\cap_{j \in \mathbb{N}} y_j^\perp =\{0\},$ we get $Ax_i=0$ for each $i \in \mathbb{N}.$ As the set $\{x_i : i \in \mathbb{N}\}$ separates $\mathbb{X}^*$ and $Ax_i=0 $ for all $ i \in \mathbb{N},$ we obtain $A=0.$ Now by applying Lemma \ref{lemma:main}, we deduce that $ (i) $ holds true.\\

		Next suppose that $(ii)$ holds true.  
		We assume that the set $\{ y_i^*: i \in \mathbb{N}\}$ separates $\mathbb{Y}^{**}$, where each $y_i^* $ is a weak*-strongly exposed point of $B_{\mathbb{X}}.$  Suppose that for each $i \in \mathbb{N}, \psi(y_i) \in S_{\mathbb{Y}^{**}}$ be such that $\psi(y_i)(y_i^*)= \|y_i^*\|=1,$ where $\psi$ is the canonical isometric isomorphism from $\mathbb{Y}$ to $\mathbb{Y}^{**}.$
		Since $\mathbb{X}^*$ has the BCP, it follows from Lemma \ref{lemma:main} that there exists a countable set $\{x_i^*: i \in \mathbb{N}\}$ such that $\cap_{i \in \mathbb{N}} (x_i^*)^\perp = \{0\}.$ Without loss of generality we assume each $x_i^* \in S_{\mathbb{X}^*}.$
			For any $i, j \in \mathbb{N}$, consider mappings $A_{ij} : \mathbb{Y}^* \longrightarrow \mathbb{X}^* $ given by 
		\begin{eqnarray*}
			A_{ij} (y^*) &=& \psi(y_i)(y^*)  x_j^*, \quad  \forall y^* \in \mathbb{Y}^*.
		\end{eqnarray*}
		Clearly, $A_{ij} $ is linear and $\|A_{ij}\|=\|\psi(y_i) \|=1.$ Following similar arguments as above, we can show that $\cap_{i,j \in \mathbb{N}} A_{ij}^\perp = \{0\}.$
		
		 Since $\psi(y_i)$ is weak*-continuous, it is easy to observe that $A_{ij}$ is  weak*-to-weak* continuous, for each $i,j \in \mathbb{N}$. Therefore, there exists  $T_{ij} \in \mathbb{L}(\mathbb{X}, \mathbb{Y})$ such that $T_{ij}^*= A_{ij} $ (see, for example,  \cite[Th. 3.1.11]{M} for details).  Clearly, $A_{ij} \in \mathbb{F}(\mathbb{X}, \mathbb{Y}) \subset \mathcal{W}.$
			Since for any $S \in \mathbb{L}(\mathbb{X}, \mathbb{Y}),$ $$T_{ij} \perp_B S \implies T_{ij}^* \perp_B S^* \implies A_{ij} \perp_B S^*$$ 
	and $\cap_{i,j \in \mathbb{N}} A_{ij}^\perp = \{0\},   $	it is easy to observe that $ \cap_{i \in \mathbb{N}} T_{ij}^\perp = \{0\}.$ It now follows from Lemma \ref{lemma:main} that $\mathcal{W}$ has the BCP.
	
	\end{proof}

    Since Theorem \ref{JFA} provides a necessary condition for the BCP of $ \mathbb{L}(\mathbb{X}, \mathbb{Y}), $ it is natural to ask how restrictive the sufficient condition obtained in Theorem \ref{operator:main} actually is. In this context, we record the following observation. 
    \begin{remark}\label{remark1}
        Following Theorem \ref{JFA} and \ref{operator:main}, we get that if there exists a countable set of strongly exposed points of $B_{\mathbb{X}}$ separating $\mathbb{X}^*,$ then $\mathbb{X}^*$ has the BCP. Similarly, if there exists a countable set of weak*-strongly exposed points of $B_{\mathbb{Y}^*}$ separating $\mathbb{Y}^{**},$ then $\mathbb{Y}$ has  the BCP. Therefore, the geometric condition used in Theorem 3.5 to ensure that the BCP of $\mathbb{L}(\mathbb{X}, \mathbb{Y})$  already guarantees the BCP of $\mathbb{X}^* $ and $\mathbb{Y}.$
 However, the converse is not true in general. Indeed, $\mathbb{X} = c_0$ 
 provides a concrete counterexample: $\mathbb{X}^* = \ell^1$ has the BCP, 
  yet $B_{c_0}$ has no strongly exposed points 
 whatsoever. Thus, the BCP of $\mathbb{X}^*$ alone does not guarantee the 
 existence of even a single strongly exposed point of $B_{\mathbb{X}},$ let 
 alone a countable separating family. This gap between the necessary condition 
 of Theorem \ref{JFA} and the sufficient condition of Theorem 
 \ref{operator:main} motivates the question of identifying  additional 
 assumptions under which the two conditions become equivalent.
 \end{remark}
 
 This naturally leads to the question of when such an equivalence can be 
 recovered. The following two results show that under a density assumption 
 on strongly exposing functionals (respectively, on Fr\'echet differentiable 
 points), the BCP of $\mathbb{X}^*$ (respectively, of $\mathbb{Y}$) is indeed 
 equivalent to the existence of the required countable separating family.

    \begin{prop}\label{Proposition: SE(X)}
         Let $\mathbb{X}$ be a Banach space such that $SE(\mathbb{X})$ is dense in $\mathbb{X}^*.$ Then the following are equivalent: 
         \begin{itemize}
             \item [$(i)$] There exists a countable set of strongly exposed points of $B_{\mathbb{X}}$ separating $\mathbb{X}^*.$
             
             \item[$(ii)$]  $\mathbb{X}^*$ has the BCP.
         \end{itemize}
    \end{prop}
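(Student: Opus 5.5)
For $(i)\implies(ii)$ I would follow Remark~\ref{remark1}. Let $\{x_i\}$ be a countable family of strongly exposed points of $B_{\mathbb{X}}$ separating $\mathbb{X}^*$, and let $f_i$ be the corresponding strongly exposing functionals. By Lemma~\ref{lemma:SE}, each $f_i$ is smooth with $J(f_i)=\{\psi(x_i)\}$, and $f_i^\perp=\{x^*\in\mathbb{X}^*: x^*(x_i)=0\}$. Hence
\[
\bigcap_i f_i^\perp=\{x^*: x^*(x_i)=0\ \forall i\}=\{0\},
\]
because the $x_i$ separate $\mathbb{X}^*$. Lemma~\ref{lemma:main} then gives that $\mathbb{X}^*$ has the BCP. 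Alternatively, one could combine Theorem~\ref{operator:main} (with $\mathbb{Y}=\mathbb{R}$) and Theorem~\ref{JFA}, but the direct argument is cleaner.

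For $(ii)\implies(i)$, assume $\mathbb{X}^*$ has the BCP. The plan is to produce a countable family $\{g_k\}$ of elements of $SE(\mathbb{X})$ with $\bigcap_k g_k^\perp=\{0\}$. Once we have it, Lemma~\ref{lemma:SE}(ii) turns this into $\{x^*: x^*(z_k)=0\ \forall k\}=\{0\}$, where $z_k$ is the strongly exposed point exposed by $g_k$ (after normalizing $g_k$, and noting that $\lambda g$ and $g$ have the same orthogonal complement). That is exactly $(i)$.

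To obtain such $g_k$, I would use the ball covering directly rather than the orthogonality form. By the remark in the introduction, the centers of a BCP covering can be chosen from any dense subset, and $SE(\mathbb{X})$ is dense by hypothesis. I would verify this in the concrete form needed. Take a covering $S_{\mathbb{X}^*}\subset\bigcup_n B(x_n^*,r_n)$ with $r_n<\|x_n^*\|$, where strict inequality can be arranged by slightly enlarging the centers as in the proof of Lemma~\ref{lemma:main}, using the balls $B(nz,n\|z\|)$ and $B(mz,m\|z\|-\epsilon)$. Fix $n$ and a rational $q\in(0,\|x_n^*\|-r_n)$, and pick $g_{n,q}\in SE(\mathbb{X})$ with $\|g_{n,q}-x_n^*\|<q/2$. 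Then $B(x_n^*,r_n-q)\subset B(g_{n,q},r_n-q/2)$, and this ball avoids $0$. There is a subtlety: shrinking radii by $q$ may lose coverage. To handle it, I would first pass to a covering by balls $B(x_n^*,s_n)$ with $s_n<r_n$ whose union still covers the sphere. One way is to use the orthogonality form: from a family $\{z_i\}$ with $\bigcap z_i^\perp=\{0\}$, each sphere point $y$ satisfies $\|tz_i-y\|<\|tz_i\|$ for some rational $t>0$ and some $\pm z_i$, with a strict margin. So the countable family of balls $B(\pm t z_i,\, t\|z_i\|-\epsilon)$, for rational $t$ and rational $\epsilon$, already covers. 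Perturbing each center by less than $\epsilon/2$ to a point $g\in SE(\mathbb{X})$ yields balls $B(g,\,t\|z_i\|-\epsilon/2)$ that contain the original ones and still miss $0$. This gives a countable covering with centers in $SE(\mathbb{X})$, and Lemma~\ref{lemma:main}, $(i)\implies(iv)$ (whose proof shows $y\notin x^-$ for each covered $y$, hence $\bigcap$ of the relevant $\perp$ sets is $\{0\}$), gives $\bigcap_k g_k^\perp=\{0\}$.

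The main obstacle is this perturbation step: keeping strict room in both the covering and the origin-avoidance conditions simultaneously while moving centers into $SE(\mathbb{X})$. The rational-margin refinement above is how I would handle it. The remaining steps are direct applications of Lemma~\ref{lemma:SE} and Lemma~\ref{lemma:main}.
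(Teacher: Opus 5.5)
Your proposal is correct and follows essentially the paper's route: for (ii)$\implies$(i) you take the covering centers in the dense set $SE(\mathbb{X})$ and apply Lemma~\ref{lemma:SE}(ii) to turn $\bigcap_k g_k^\perp=\{0\}$ into separation, and your direct argument for (i)$\implies$(ii) is just the $\mathbb{Y}=\mathbb{R}$ case of Theorem~\ref{operator:main} unwound. Your rational-margin perturbation correctly supplies the step the paper dismisses with ``without loss of generality.'' The shrinking worry in your first attempt is unnecessary: once $r_n<\|x_n^*\|$ you can enlarge the radius by half the margin rather than shrink it.
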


    \begin{proof}
        We note that the implication $ (i) \implies (ii) $ follows from Theorem \ref{operator:main}. Accordingly, we will only prove that $(ii) \implies (i).$ It follows from Lemma \ref{lemma:main} that there exists a countable set $\{x_i^*: i \in \mathbb{N}\} \subset S_{\mathbb{X}^*}$ such that $$\bigcap_{i \in \mathbb{N}} (x_i^*)^\perp=\{0\}. $$ Observe that these $x_i^* $ are actually the centers of the balls covering $ S_{\mathbb{X}^*}. $ Since $SE(\mathbb{X})$ is dense in $ \mathbb{X}^*, $ without loss of generality we may and do assume that each $x_i^* \in SE(\mathbb{X})$. For each $i \in \mathbb{N}$, let $x_i^*$ strongly exposes $x_i \in S_\mathbb{X}.$ To complete the proof, we need to show that $\{x_i: i \in \mathbb{N}\}$ separates $\mathbb{X}^*.$ Suppose that for each $i \in \mathbb{N},$ $x^* (x_i)=0,$ for some $x^* \in \mathbb{X}^*.$  Following Lemma \ref{lemma:SE}, we get that $x^* \in (x_i^*)^\perp,$ for each $i \in \mathbb{N}.$ Hence $x^* \in \cap_{i \in \mathbb{N}} (x_i^*)^\perp= \{0\},$ which implies that $x^*=0.$ Therefore, $\{x_i: i \in \mathbb{N}\}$ separates $\mathbb{X}^*.$  
    \end{proof}

In a similar spirit to the previous observation, our next result characterizes the BCP of a Banach space under the density assumption on Fr\'echet differentiable points.

    \begin{prop}\label{Proposition: Frechet diff}
         Let $\mathbb{Y}$ be a Banach space such that the set of all Fr\'echet differentiable points is dense in $\mathbb{Y}.$ Then the followings are equivalent: 
         \begin{itemize}
             \item [$(i)$] There exists a countable set of weak*-strongly exposed points of $B_{\mathbb{Y}^*}$ separating $\mathbb{Y}^{**}, $
             
             \item[$(ii)$]  $\mathbb{Y}$ has the BCP.
         \end{itemize}
    \end{prop}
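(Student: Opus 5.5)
The implication $(i)\implies(ii)$ comes from the results already in hand. If a countable set of weak*-strongly exposed points of $B_{\mathbb{Y}^*}$ separates $\mathbb{Y}^{**}$, then Theorem \ref{operator:main}$(ii)$ applies with, say, $\mathbb{X}=\mathbb{R}$. In that case $\mathbb{X}^*=\mathbb{R}$ trivially has the BCP, and $\mathbb{L}(\mathbb{R},\mathbb{Y})\cong\mathbb{Y}$ isometrically. Hence $\mathbb{Y}$ has the BCP, as already noted in Remark \ref{remark1}. A direct route is also available: run the argument below backwards using Lemma \ref{lemma:main}.

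For $(ii)\implies(i)$ we mirror the proof of Proposition \ref{Proposition: SE(X)}, with Lemma \ref{lemma:Frechet differentiable} playing the role of Lemma \ref{lemma:SE}.

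\emph{Step 1: choose the centers.} Since $\mathbb{Y}$ has the BCP, Lemma \ref{lemma:main} gives a countable set $\{y_i\}$ with $\bigcap_i y_i^\perp=\{0\}$. The $y_i$ are obtained as centers of a covering by balls avoiding the origin. Such a covering is stable under small perturbations of the centers: one can shrink the radii slightly and cover with countably many copies. So, by density of Fréchet differentiable points, we may assume each $y_i\in S_{\mathbb{Y}}$ is a point of Fréchet differentiability.

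\emph{Step 2: pass to the dual.} By Lemma \ref{lemma:Frechet differentiable}, $J(y_i)=\{y_i^*\}$ is a singleton, and $y_i^*$ is a weak*-strongly exposed point of $B_{\mathbb{Y}^*}$, exposed by $y_i$. By Theorem \ref{J} and smoothness,
\[
y_i^\perp=\ker y_i^*.
\]
Hence $\bigcap_i\ker y_i^*=\{0\}$, that is, $\{y_i^*\}$ separates the points of $\mathbb{Y}$.

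\emph{Step 3: upgrade to separating $\mathbb{Y}^{**}$.} This is the main obstacle. We must show that if $\phi\in\mathbb{Y}^{**}$ satisfies $\phi(y_i^*)=0$ for all $i$, then $\phi=0$.

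The plan is to use the strong exposedness more fully than Step 2 does. Fréchet differentiability at $y_i$ means $y_i^*$ is the Fréchet derivative of the norm. So the norm of $\mathbb{Y}^{**}$ should be Fréchet differentiable at $\psi(y_i)$, with the same derivative $y_i^*$; this follows because the bidual norm's derivative at $\psi(y_i)$ is governed by the same uniform limit via Goldstine. Therefore $\psi(y_i)$ is a smooth point of $\mathbb{Y}^{**}$, and $\psi(y_i)^\perp=\{\phi:\phi(y_i^*)=0\}$ in $\mathbb{Y}^{**}$.

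It then remains to show $\bigcap_i\psi(y_i)^\perp=\{0\}$ in $\mathbb{Y}^{**}$, i.e.\ that $\{\psi(y_i)\}$ witnesses the BCP of $\mathbb{Y}^{**}$. Here I would argue through the balls. The balls $B(\lambda y_i,r)$ covering $S_{\mathbb{Y}}$ have weak*-closed bidual counterparts $B[\lambda\psi(y_i),r]$, and Goldstine density of $B_{\mathbb{Y}}$ in $B_{\mathbb{Y}^{**}}$ should transfer the orthogonality-free condition.

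If this transfer resists, the fallback is to reinterpret "separating $\mathbb{Y}^{**}$" as it is actually used in the proof of Theorem \ref{operator:main}$(ii)$. There, only $\bigcap_i\ker\psi(y_i)$ restricted to the relevant adjoints matters. The adjoints $S^*$ are weak*-continuous, so vanishing on the $y_i^*$ combined with a weak*-density argument suffices. In that case the set obtained in Step 2 gives $(i)$ in the sense needed.
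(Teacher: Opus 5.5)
Your $(i)\Rightarrow(ii)$ and Steps 1--2 are sound. They produce weak*-strongly exposed points $y_i^*$ with $\bigcap_i\ker y_i^*=\{0\}$, so $\{y_i^*\}$ separates $\mathbb{Y}$, equivalently $\overline{\mathrm{span}}^{w^*}\{y_i^*\}=\mathbb{Y}^*$.

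Step 3 is a genuine gap, and it cannot be filled. Passing to Fr\'echet differentiability of the bidual norm at $\psi(y_i)$ is harmless, but it only restates $(i)$ as $\bigcap_i\psi(y_i)^\perp=\{0\}$ in $\mathbb{Y}^{**}$. The Goldstine transfer does not work: a countable union of weak*-closed balls covering the weak*-dense set $S_{\mathbb{Y}}$ need not cover $S_{\mathbb{Y}^{**}}$.

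In fact $(ii)\Rightarrow(i)$ is false as stated. Suppose a countable $\{f_i\}\subset\mathbb{Y}^*$ separates $\mathbb{Y}^{**}$. Then Hahn--Banach gives $\overline{\mathrm{span}}\{f_i\}=\mathbb{Y}^*$ in norm, so $\mathbb{Y}^*$ must be separable. Now take $\mathbb{Y}=\ell_\infty$:
\begin{itemize}
\item It has the BCP: $J(e_n)=\{e_n^*\}$, so $\bigcap_n e_n^\perp=\bigcap_n\ker e_n^*=\{0\}$.
\item Its norm is Fr\'echet differentiable at every $x$ having a coordinate with $|x_n|>\sup_{m\neq n}|x_m|$, since near such $x$ one has $\|x+h\|_\infty=|x_n+h_n|$.
\item These points are dense: push one nearly maximal coordinate slightly above $\|x\|_\infty$.
\item Yet $\ell_\infty^*$ is not separable. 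Concretely, any nonzero $\phi\in\ell_\infty^{**}$ vanishing on $\ell_1$ annihilates every $e_n^*$.
\end{itemize}

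The paper's own proof has exactly the same hole. It obtains $\overline{\mathrm{span}}^{w^*}\{y_i^*\}=\mathbb{Y}^*$ and then concludes $\phi=0$ for an arbitrary $\phi\in\mathbb{Y}^{**}$ with $\phi(y_i^*)=0$. That step is valid only when $\phi$ is weak*-continuous, i.e.\ $\phi\in\psi(\mathbb{Y})$.

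Your fallback is the right diagnosis. The provable statement replaces ``separating $\mathbb{Y}^{**}$'' by ``separating $\mathbb{Y}$'', and that is exactly what your Steps 1--2 establish. This weaker property is also all that Theorem~\ref{operator:main}$(ii)$ uses downstream, because only adjoints are tested there: $S^*y_i^*=0$ for all $i$ means $y_i^*(Sx)=0$ for all $i$ and $x$, hence $S=0$. Be explicit, though, that this proves a corrected proposition, not the one stated.
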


     \begin{proof}
     To prove that $  (i) \implies (ii), $ let us assume that $ \{ f_i : i \in \mathbb{N} \} $ is a countable set of weak*-strongly exposed points of $B_{\mathbb{Y}^*}$ separating $\mathbb{Y}^{**}. $ Let $ \psi(y_i) $ be the weak*-strongly exposing functional corresponding to $ f_i. $ Using the fact that $ \psi $ is an isometry, it is not hard to deduce that $ \bigcap_{i \in \mathbb{N}} (y_i)^\perp=\{0\}. $ This finishes the proof, by virtue of Lemma \ref{lemma:main}.\\
     Let us now prove that $(ii) \implies (i).$ We assume that $\mathbb{Y}$ has the BCP. It follows from Lemma \ref{lemma:main} that there exists a countable set $\{y_i: i \in \mathbb{N}\} \subset S_{\mathbb{Y}}$ such that $$\bigcap_{i \in \mathbb{N}} (y_i)^\perp=\{0\}. $$ Observe that these $y_i$ are actually the centers of the balls covering $ S_{\mathbb{Y}}. $ Since
        the set of all Fr\'echet differentiable points is dense in $\mathbb{Y},$
         the center of the balls $y_i$ can be taken as Fr\'echet differentiable points. For each $i \in \mathbb{N}$, let $J(y_i)= \{y_i^*\}.$ Following Lemma \ref{lemma:Frechet differentiable},  each $y_i^*$ is a weak*-strongly exposed points $B_{\mathbb{Y}^*}$. 
          To complete the proof, we need to show that  $\{y_i^*: i \in \mathbb{N}\}$ separates $\mathbb{Y}^{**}.$ Let $\phi \in \mathbb{Y}^{**}$ such that $\phi(y_i^*)=0, $ for all $ i \in \mathbb{N}.$ Since $\cap_{i \in \mathbb{N}} (y_i)^\perp= \{0\},$ it follows from Theorem \ref{J} that $\cap_{i \in \mathbb{N}} \ker y_i^* = \{0\}.$ Applying Lemma \ref{Rudin}, $ R_{Ann}(\cap_{i \in \mathbb{N}} \ker y_i^* ) = \overline{span\{ y_i^*: i \in \mathbb{N}\}}^{w*} \implies \overline{span\{ y_i^*: i \in \mathbb{N}\}}^{w*}= \mathbb{Y}^{*}.$ 
        Since $\phi(y_i^*)=0 $ for each $ i \in \mathbb{N}$, it is immediate that $\phi=0.$
    \end{proof}

    \begin{remark}
        The implication $ (i) \implies (ii) $ in Proposition \ref{Proposition: Frechet diff} do not require the Fr\'echet differentiable points to be dense in $\mathbb{Y}.$
    \end{remark}

As an immediate consequence of the above results, the necessary condition appearing in Theorem \ref{JFA} also becomes sufficient under this density assumption, thus producing the following characterization of the BCP of $\mathbb{L}(\mathbb{X}, \mathbb{Y})$ in this setting. 

    \begin{theorem}\label{theorem:characterization}
          Let $\mathbb{X}, \mathbb{Y}$ be  Banach spaces such that either of the following conditions holds true: 
          \begin{itemize}
              \item [$(i)$] $SE(\mathbb{X})$ is dense in $\mathbb{X}^*$

              \item[$(ii)$] The set of all Fr\'echet differentiable points is dense in $\mathbb{Y}.$ 
          \end{itemize}
          Let $\mathcal{W}$ be a subspace of $\mathbb{L}(\mathbb{X}, \mathbb{Y})$ containing $\mathbb{F}(\mathbb{X}, \mathbb{Y}).$ Then  $\mathcal{W}$ has the BCP if and only if both $\mathbb{X}^*$ and $\mathbb{Y}$ have the BCP.
    \end{theorem}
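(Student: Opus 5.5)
The plan is to obtain the sufficiency direction by combining Theorem \ref{operator:main} with Propositions \ref{Proposition: SE(X)} and \ref{Proposition: Frechet diff}. The necessity direction will come from adapting Theorem \ref{JFA} from $\mathbb{L}(\mathbb{X},\mathbb{Y})$ to an arbitrary subspace $\mathcal{W}\supseteq \mathbb{F}(\mathbb{X},\mathbb{Y})$.

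\emph{Sufficiency.} Assume both $\mathbb{X}^*$ and $\mathbb{Y}$ have the BCP.
\begin{itemize}
\item[$\bullet$] Under hypothesis $(i)$, Proposition \ref{Proposition: SE(X)} applied to $\mathbb{X}$ gives a countable set of strongly exposed points of $B_{\mathbb{X}}$ separating $\mathbb{X}^*$. Together with the BCP of $\mathbb{Y}$, this is exactly condition $(i)$ of Theorem \ref{operator:main}, so $\mathcal{W}$ has the BCP.
\item[$\bullet$] Under hypothesis $(ii)$, Proposition \ref{Proposition: Frechet diff} applied to $\mathbb{Y}$ gives a countable set of weak*-strongly exposed points of $B_{\mathbb{Y}^*}$ separating $\mathbb{Y}^{**}$. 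Together with the BCP of $\mathbb{X}^*$, this is condition $(ii)$ of Theorem \ref{operator:main}, so again $\mathcal{W}$ has the BCP.
\end{itemize}
Both cases are immediate from results already proved.

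\emph{Necessity.} Assume $\mathcal{W}$ has the BCP. I would not use the density hypotheses here; instead I would rerun the argument of Theorem \ref{JFA} (\cite[Th. 3.2]{LLLZ}) inside $\mathcal{W}$. The key structural fact is that the rank-one operators $x^*\otimes y$ all lie in $\mathcal{W}$ and satisfy $\|x^*\otimes y\|=\|x^*\|\|y\|$. Consequently, for fixed $x_0^*\in S_{\mathbb{X}^*}$ the map $y\mapsto x_0^*\otimes y$ is an isometric embedding of $\mathbb{Y}$ into $\mathcal{W}$. Similarly, for fixed $y_0\in S_{\mathbb{Y}}$ the map $x^*\mapsto x^*\otimes y_0$ is an isometric embedding of $\mathbb{X}^*$ into $\mathcal{W}$. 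The unit spheres of $\mathbb{Y}$ and $\mathbb{X}^*$ therefore sit inside $S_{\mathcal{W}}$, and the task is to transfer a countable covering $\{B(T_n,r_n)\}$ of $S_{\mathcal{W}}$, with $r_n\le\|T_n\|$, back to these spheres.

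\emph{Main obstacle.} BCP does not pass to subspaces in general, so this transfer is the step I expect to be hardest. Take the $\mathbb{Y}$ case and fix $x_0\in S_{\mathbb{X}}$ and $x_0^*\in J(x_0)$. If $x_0^*\otimes y\in B(T_n,r_n)$, evaluating at $x_0$ gives $\|T_nx_0-y\|<r_n$. However, the ball $B(T_nx_0,r_n)$ need not avoid $0$, because $\|T_nx_0\|$ may be smaller than $r_n$.

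To repair this, I would:
\begin{itemize}
\item[$\bullet$] for each $n,k$, choose $x_{n,k}\in S_{\mathbb{X}}$ with $\|T_nx_{n,k}\|>\|T_n\|-1/k$, and a functional $f_{n,k}\in J(x_{n,k})$;
\item[$\bullet$] test the rank-one operators $f_{n,k}\otimes y$ against the covering;
\item[$\bullet$] use the slack $r_n-\|T_n-f\otimes y\|>0$ to pick $k$ so large that $\|T_nx_{n,k}-y\|<\|T_nx_{n,k}\|$;
\item[$\bullet$] conclude that the countable family $\{B(T_nx_{n,k},\|T_nx_{n,k}\|)\}$ covers $S_{\mathbb{Y}}$ without containing $0$.
\end{itemize}
The delicate point is the order of choices. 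The slack must be controlled uniformly enough that a countable choice of $x_{n,k}$ and $f_{n,k}$ suffices. If that bookkeeping fails, I would fall back on the remark that the proof in \cite{LLLZ} uses only rank-one operators and the covering of the sphere, so it applies verbatim to $\mathcal{W}$.

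The $\mathbb{X}^*$ case is the dual analogue. The centers would be $T_n^*y_{n,k}^*$, where $y_{n,k}^*\in S_{\mathbb{Y}^*}$ are chosen with $\|T_n^*y_{n,k}^*\|$ close to $\|T_n\|=\|T_n^*\|$.
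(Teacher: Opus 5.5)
\textbf{Sufficiency.} Your argument here is exactly the paper's. Proposition~\ref{Proposition: SE(X)} (resp.\ Proposition~\ref{Proposition: Frechet diff}) supplies the countable separating family, and Theorem~\ref{operator:main} finishes.

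\textbf{Necessity: what the paper does.} The paper only invokes Theorem~\ref{JFA}, which is your fallback. You are right that Theorem~\ref{JFA}, as recorded in the paper, is stated only for $\mathbb{L}(\mathbb{X},\mathbb{Y})$. So an argument for a general $\mathcal{W}\supseteq\mathbb{F}(\mathbb{X},\mathbb{Y})$ is tacitly needed.

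\textbf{Necessity: the gap in your direct argument.} The sketch does not work as written, and the problem is exactly the ``order of choices'' you flagged.
\begin{itemize}
\item[(1)] The operator $f_{n,k}\otimes y$ lies in \emph{some} ball $B(T_m,r_m)$ of the covering, with $m$ depending on $(n,k,y)$. But $x_{n,k}$ was chosen to almost norm $T_n$, not $T_m$.
\item[(2)] Even when $m=n$, the slack $\|T_n\|-\|T_n-f_{n,k}\otimes y\|$ depends on $k$. So ``choose $k$ large compared with the slack'' is circular.
\end{itemize}

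\textbf{The repair.} Decouple the test operator from $k$.
\begin{itemize}
\item[$\bullet$] Fix one $x_0^*\in S_{\mathbb{X}^*}$. For $y\in S_{\mathbb{Y}}$, the operator $x_0^*\otimes y\in S_{\mathcal{W}}$ lies in some $B(T_n,r_n)$.
\item[$\bullet$] Hence $\delta:=\|T_n\|-\|T_n-x_0^*\otimes y\|>0$, and $\delta$ depends only on $y$.
\item[$\bullet$] Take $k$ with $1/k<\delta$. Then
\[
\|T_nx_{n,k}-x_0^*(x_{n,k})\,y\|\le\|T_n-x_0^*\otimes y\|=\|T_n\|-\delta<\|T_n\|-\tfrac1k<\|T_nx_{n,k}\|,
\]
so $T_nx_{n,k}\not\perp_B y$.
\item[$\bullet$] By homogeneity, $\bigcap(T_nx_{n,k})^\perp=\{0\}$, taken over the countably many pairs with $T_nx_{n,k}\neq0$. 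Lemma~\ref{lemma:main} then gives the BCP of $\mathbb{Y}$.
\end{itemize}
The sign and size of $x_0^*(x_{n,k})$ are uncontrolled. So you cannot claim directly that the balls $B(T_nx_{n,k},\|T_nx_{n,k}\|)$ cover $S_{\mathbb{Y}}$. Going through the $\perp$-formulation of Lemma~\ref{lemma:main} (equivalently, $\pm$-centers with dilations) absorbs this.

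\textbf{The $\mathbb{X}^*$ case.} It is identical:
\begin{itemize}
\item[$\bullet$] fix $y_0\in S_{\mathbb{Y}}$ and test with $x^*\otimes y_0$;
\item[$\bullet$] choose $y^*_{n,k}\in S_{\mathbb{Y}^*}$ with $\|T_n^*y^*_{n,k}\|>\|T_n\|-1/k$;
\item[$\bullet$] use $(T_n-x^*\otimes y_0)^*y^*=T_n^*y^*-y^*(y_0)\,x^*$ and $\|T_n^*\|=\|T_n\|$ to get $T_n^*y^*_{n,k}\not\perp_B x^*$ for large $k$.
\end{itemize}
With this fix your proof is complete and uses none of the density hypotheses in the necessity direction. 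It also makes explicit the passage from $\mathbb{L}(\mathbb{X},\mathbb{Y})$ to an arbitrary $\mathcal{W}\supseteq\mathbb{F}(\mathbb{X},\mathbb{Y})$, which the paper leaves implicit.
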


    We now recall two standard density results that provide useful sufficient conditions for the hypotheses of the above theorem.  If $\mathbb X$ has the Radon-Nikod\'{y}m property, then $SE(\mathbb{X})$ is dense in $\mathbb{X}^*$. This density also holds when $\mathbb X$ is ALUR (in particular, when $\mathbb{X}$ is LUR), or more generally when every element of $S_{\mathbb{X}}$ is strongly exposed. On the other hand, if $\mathbb{Y}^*$ has the Radon-Nikod\'{y}m property, then the points having Fr\'{e}chet differentiability of the norm on $\mathbb Y$ are dense in $S_{\mathbb{Y}}$. We refer to \cite{Jung} for further details and examples of Banach spaces for which these density conditions hold.

    \begin{cor}\label{RNP}
          Let $\mathbb{X}, \mathbb{Y}$ be  Banach spaces such that either $\mathbb{X}$ or $\mathbb{Y}^*$ has the RNP. Let $\mathcal{W}$ be a subspace of $\mathbb{L}(\mathbb{X}, \mathbb{Y})$ containing $\mathbb{F}(\mathbb{X}, \mathbb{Y}).$ Then  $\mathcal{W}$ has the BCP if and only if both $\mathbb{X}^*$ and $\mathbb{Y}$ have the BCP.
    \end{cor}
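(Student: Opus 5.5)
The corollary follows by combining Theorem \ref{theorem:characterization} with the two standard density facts recalled just before it. The plan is to check that the RNP hypothesis on $\mathbb{X}$ gives condition $(i)$ of Theorem \ref{theorem:characterization}, and that the RNP hypothesis on $\mathbb{Y}^*$ gives condition $(ii)$.

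\emph{Case 1: $\mathbb{X}$ has the RNP.} We invoke the standard density result: by Bourgain's theorem (via Stegall's variational principle), the norm-strongly exposing functionals of $B_{\mathbb{X}}$ form a dense $G_\delta$ subset of $\mathbb{X}^*$. Since $SE(\mathbb{X})$ consists of the nonzero real multiples of elements of $SE(B_{\mathbb{X}})$, this says exactly that $SE(\mathbb{X})$ is dense in $\mathbb{X}^*$. Hypothesis $(i)$ of Theorem \ref{theorem:characterization} therefore holds, and the equivalence follows directly.

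\emph{Case 2: $\mathbb{Y}^*$ has the RNP.} Here we use the dual fact, due to Collier and others: $\mathbb{Y}^*$ has the RNP if and only if $\mathbb{Y}$ is an Asplund space, i.e.\ every equivalent norm on $\mathbb{Y}$ is Fr\'echet differentiable on a dense $G_\delta$ subset. In particular, the set of Fr\'echet differentiable points of the given norm is dense in $S_{\mathbb{Y}}$. By positive homogeneity of the norm, Fr\'echet differentiability at $y$ is equivalent to Fr\'echet differentiability at $\lambda y$ for $\lambda > 0$. Hence density in $S_{\mathbb{Y}}$ upgrades to density in $\mathbb{Y}$: approximate $y \neq 0$ by $\|y\|$ times a differentiable point near $y/\|y\|$, and approximate $0$ by small multiples of any differentiable point. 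Hypothesis $(ii)$ of Theorem \ref{theorem:characterization} then holds.

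In both cases Theorem \ref{theorem:characterization} gives that $\mathcal{W}$ has the BCP if and only if both $\mathbb{X}^*$ and $\mathbb{Y}$ have the BCP. The only nontrivial input is the pair of classical density theorems, which the paper treats as known (see \cite{Jung}). The one small point needing care is passing from density on the sphere to density in the whole space, and that is handled by the homogeneity argument above.
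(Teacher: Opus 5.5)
Your proposal is correct and follows the paper's route exactly. The paper obtains this corollary directly from Theorem~\ref{theorem:characterization}, using the two recalled density facts: the RNP of $\mathbb{X}$ makes $SE(\mathbb{X})$ dense in $\mathbb{X}^*$, and the RNP of $\mathbb{Y}^*$ makes the Fr\'echet differentiable points dense. Your homogeneity argument, which upgrades density in $S_{\mathbb{Y}}$ to density in $\mathbb{Y}$, fills a small step the paper passes over silently and is handled correctly; the Asplund property in fact already gives density in all of $\mathbb{Y}$ directly.
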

	
	Let us further mention here that as an application of the above theorem, the following results are immediate, which also generalize  \cite[Th. 3.4(ii), Cor. 3.3, Cor. 3.4]{LLLZ}.
	\begin{cor}
		Let $\mathbb{X}, \mathbb{Y}$ be Banach spaces. Then the following results hold true:
		\begin{itemize}
			\item[$(i)$]  Any subspace of $\mathbb{L}(\ell_p, \mathbb{Y})$ containing $\mathcal{F}(\ell_p, \mathbb{Y})$ has the BCP if and only if $\mathbb{Y}$ has the BCP, for any $ p \in [1, \infty]. $ 
			
			\item[$(ii)$] Any subspace of 	$\mathbb{L}(\mathbb{X}, \ell_p)$ containing $\mathbb{F}(\mathbb{X},\ell_p)$ has the BCP if and only if $\mathbb{X}^*$ has the BCP, for any $ p \in  (1, \infty).$ 
			
			\item[$(iii)$] Any subspace of 	$\mathbb{L}(\mathbb{X}, c_0)$ containing $\mathbb{F}(\mathbb{X}, c_0)$ has the BCP if and only if $\mathbb{X}^*$ has the BCP. 
		\end{itemize}
		\end{cor}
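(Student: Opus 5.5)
The plan is to read off each part from Theorem~\ref{theorem:characterization}, usually in the form of Corollary~\ref{RNP}. In each case one of the two spaces involved ($\mathbb{X}^*$ or $\mathbb{Y}$) is a classical sequence space, and I will check two things for it: that the relevant density hypothesis holds, and that it has the BCP. The BCP condition on that space is then automatically satisfied, so the equivalence of Theorem~\ref{theorem:characterization} reduces to the BCP of the other space. Two facts will be used throughout. Every separable Banach space has the BCP, as recalled in the Introduction. Separable dual spaces and reflexive spaces have the RNP.

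Parts $(ii)$ and $(iii)$ are immediate. For $(ii)$, take $\mathbb{Y}=\ell_p$ with $1<p<\infty$. Then $\mathbb{Y}^*=\ell_q$ is reflexive and hence has the RNP, so Corollary~\ref{RNP} applies. Since $\ell_p$ is separable, it has the BCP, and the criterion becomes: $\mathcal{W}$ has the BCP if and only if $\mathbb{X}^*$ has the BCP. For $(iii)$, take $\mathbb{Y}=c_0$. Its dual $c_0^*=\ell_1$ is a separable dual space, so it has the RNP. Also $c_0$ is separable and so has the BCP, and the same conclusion follows.

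For $(i)$ with $1\le p<\infty$, the space $\mathbb{X}=\ell_p$ has the RNP: it is reflexive for $p>1$ and a separable dual ($\ell_1=c_0^*$) for $p=1$. So condition $(i)$ of Theorem~\ref{theorem:characterization} holds, and it remains to verify that $\mathbb{X}^*$ has the BCP. For $1<p<\infty$ this holds because $\ell_q$ is separable. For $p=1$, we have $\mathbb{X}^*=\ell_\infty$, which is already noted in the Introduction to have the BCP. One can also see this directly from Lemma~\ref{lemma:main}: each unit vector $e_n\in\ell_\infty$ is smooth with $J(e_n)=\{\delta_n\}$, because a norm-one finitely additive measure with $\mu(\{n\})=1$ must equal $\delta_n$. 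Hence $e_n^\perp=\{y:y_n=0\}$ and $\bigcap_n e_n^\perp=\{0\}$.

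The main obstacle is the endpoint $p=\infty$. There $\ell_\infty$ fails the RNP, and $B_{\ell_\infty}$ seems to have no strongly exposed points. For example, with $f=\sum 2^{-k}\delta_k$ and $x_n=\mathbf{1}-2e_n$ we get $f(x_n)\to1$ although $x_n\not\to\mathbf{1}$. So neither density hypothesis is available on the $\mathbb{X}$ side, and nothing can be assumed about $\mathbb{Y}$. The fact needed is that $\ell_\infty^*$ has the BCP. I would try to establish it with Lemma~\ref{lemma:main}, looking for countably many functionals $\delta_n$ or their combinations whose orthogonal complements in $\ell_\infty^*$ intersect trivially. Failing that, I would invoke the earlier result of \cite{LLLZ} that this corollary generalizes at this endpoint. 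I would then still need a separate argument for the sufficiency direction in $\mathbb{L}(\ell_\infty,\mathbb{Y})$, for instance by testing against the rank-one operators $\delta_n\otimes y_j$ as in the proof of Theorem~\ref{operator:main}.
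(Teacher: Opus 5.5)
Your handling of $(ii)$, $(iii)$, and of $(i)$ for $1\le p<\infty$ is correct. It is exactly the route the paper intends; the paper gives no argument beyond calling the corollary ``immediate'' from Theorem~\ref{theorem:characterization}. In each case the sequence-space factor supplies the RNP hypothesis of Corollary~\ref{RNP} and itself has the BCP, either by separability or, for $\ell_1^*=\ell_\infty$, via the smooth points $e_n$ as you show. The criterion then collapses to the BCP of the remaining space. The genuine gap is the endpoint $p=\infty$ of $(i)$, which you leave open. It cannot be closed: the fact you set out to prove, that $\ell_\infty^*$ has the BCP, is false, and so is the case $p=\infty$ of $(i)$ itself.

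The reason is cardinality. Suppose a Banach space $Z$ has the BCP. Lemma~\ref{lemma:main} gives $z_i\neq 0$ with $\bigcap_i z_i^\perp=\{0\}$. Choose $g_i\in J(z_i)$; Theorem~\ref{J} gives $\ker g_i\subseteq z_i^\perp$, hence $\bigcap_i\ker g_i=\{0\}$. So $z\mapsto (g_i(z))_{i\in\mathbb{N}}$ is an injective linear map $Z\to\mathbb{R}^{\mathbb{N}}$, and $|Z|\le 2^{\aleph_0}$. On the other hand, the functionals $x\mapsto\lim_{\mathcal{U}}x_n$, $\mathcal{U}\in\beta\mathbb{N}$, are pairwise distinct elements of $\ell_\infty^*$. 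Hence $|\ell_\infty^*|\ge 2^{2^{\aleph_0}}$, and $\ell_\infty^*$ fails the BCP. Moreover, for any $0\neq y_0\in\mathbb{Y}$, the map $f\mapsto f(\cdot)\,y_0$ embeds $\ell_\infty^*$ injectively into $\mathbb{F}(\ell_\infty,\mathbb{Y})\subseteq\mathcal{W}$. So \emph{no} subspace $\mathcal{W}$ as in $(i)$ has the BCP when $p=\infty$. Taking $\mathbb{Y}=\mathbb{R}$, or any separable $\mathbb{Y}$, gives a counterexample to the ``if'' direction. Therefore none of your fallbacks can succeed: not a search through Lemma~\ref{lemma:main}, not an appeal to the earlier operator-space results, and not testing against $\delta_n\otimes y_j$. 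Indeed, Theorem~\ref{JFA} shows the opposite once $\ell_\infty^*$ is known to fail the BCP. Your observation that $\ell_\infty$ lacks both the RNP and strongly exposed points is precisely why the paper's derivation via Corollary~\ref{RNP} also covers only $1\le p<\infty$. The correct statement of $(i)$ must exclude $p=\infty$.
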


The preceding characterization also enables us to settle an open problem raised in \cite[Question 3]{BLS}. More precisely, the authors asked whether $\mathbb{L}(L^p[0,1])$ has the BCP for $1<p<\infty$ and obtained only a partial affirmative answer for $\frac{3}{2}<p<3$ (see \cite[Th. 3.3]{BLS}). Since $L^p[0,1]$ is reflexive for $1<p<\infty$, it has the RNP. Moreover, both $L^p[0,1]$ and its dual space $L^q[0,1]$, where $\frac1p+\frac1q=1$, have the BCP. Therefore, an application of Corollary \ref{RNP} yields the following result, which completely answers Question~3 of \cite{BLS}.

	\begin{cor}
		The space $\mathbb{L}(L_p[0,1])$ has the ball covering property, for any $1 < p < \infty.$
	\end{cor}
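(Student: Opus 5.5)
The statement is an immediate specialization of Corollary \ref{RNP} with $\mathbb{X}=\mathbb{Y}=L_p[0,1]$ and $\mathcal{W}=\mathbb{L}(L_p[0,1])$. This $\mathcal{W}$ trivially contains $\mathbb{F}(L_p[0,1],L_p[0,1])$. The plan is to verify the two hypotheses of that corollary: a Radon--Nikod\'ym condition on $\mathbb{X}$ or on $\mathbb{Y}^*$, and the BCP of both $\mathbb{X}^*$ and $\mathbb{Y}$.

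\textbf{Radon--Nikod\'ym hypothesis.} For $1<p<\infty$, the space $L_p[0,1]$ is reflexive, for instance because it is uniformly convex by Clarkson's inequalities and hence reflexive by Milman--Pettis. Reflexive spaces have the RNP, so $\mathbb{X}=L_p[0,1]$ has the RNP. Equally, $\mathbb{Y}^*=L_q[0,1]$, with $\frac1p+\frac1q=1$, is reflexive and so has the RNP. Either observation suffices, and via the density remarks preceding Corollary \ref{RNP} this places us under hypothesis $(i)$ (or $(ii)$) of Theorem \ref{theorem:characterization}.

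\textbf{BCP hypothesis.} We need that $\mathbb{Y}=L_p[0,1]$ and $\mathbb{X}^*=L_q[0,1]$ have the BCP. Both spaces are separable for $1<p,q<\infty$, since step functions with rational breakpoints and rational values are dense. As recalled in the introduction, every separable Banach space has the BCP.

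A direct check of that fact via Lemma \ref{lemma:main} runs as follows. Take a countable dense set $\{z_i\}$ of nonzero vectors. If $y\neq 0$ were Birkhoff--James orthogonal to every $z_i$, approximate $y/\|y\|$ by some $z_k$ with $\|z_k-y/\|y\|\|<\tfrac12$. Then
\[
\Bigl\|z_k-\tfrac{1}{\|y\|}y\Bigr\|<\tfrac12<\|z_k\|,
\]
which contradicts $z_k\perp_B y$.

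\textbf{Conclusion.} Corollary \ref{RNP} then gives that $\mathbb{L}(L_p[0,1])$ has the BCP. There is no real obstacle here. The only point requiring care is that the RNP hypothesis is genuinely supplied by reflexivity, and this is a standard fact.
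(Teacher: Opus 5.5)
Your proposal is correct and follows the paper's argument. Reflexivity of $L_p[0,1]$ (or of $L_q[0,1]$) supplies the RNP hypothesis, and separability gives the BCP of $L_p[0,1]$ and $L_q[0,1]$; Corollary~\ref{RNP} with $\mathbb{X}=\mathbb{Y}=L_p[0,1]$ then finishes, and your direct check via Lemma~\ref{lemma:main} that separable spaces have the BCP correctly fills in a fact the paper only quotes.
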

	
		\section*{Application to the space of Lipschitz functions}

	Let $(M,d)$ be a  metric space with a distinguished element called $0$ and let $\mathbb{Y}$ be a Banach space. The Lipschitz space $ \operatorname{Lip}_0(M, \mathbb{Y})$ is defined as the space of all Lipschitz map $f: M \to \mathbb{Y}$ such that $f(0)=0,$ endowed with the norm 
	\[
	\|f\|_L= \sup \bigg\{  \frac{\|f(t)- f(s)\|}{d(t,s)}: t,s \in M, t \neq s \bigg\}.
	\]
	Whenever $ \mathbb{Y}=\mathbb{R}$, we denote $\operatorname{Lip}_0(M, \mathbb{R})$ as $\operatorname{Lip}_0(M).$
	For any $t \in M,$ define the evaluation map $\delta_t:  \operatorname{Lip}_0(M) \to \mathbb{R}$ by $\delta_t(f)= f(t).$ The Lipschitz-free space over $M$, denoted by $\mathcal{F}(M)$, is defined as 
	\[
	\mathcal{F}(M)= \overline{span} \{\delta_t: t \in M\} \subseteq \operatorname{Lip}_0(M)^*.
	\]
	It is well-known that dual of $\mathcal{F}(M)$ is isometrically isomorphic to the space $\operatorname{Lip}_0(M).$ Let $\delta_M: M\to\mathcal{F}(M)$ denote the canonical isometric embedding, given by $\delta_M(p)=\delta_p$ for $p\in M$.
      We recall the standard linearization property of Lipschitz-free spaces. There is an onto linear isometry 
      \[
      \operatorname{Lip}_0(M,\mathbb{Y}) \to \mathbb{L}(\mathcal{F}(M),\mathbb{Y}), \quad  \text{ by } 
f \to  T_f,
      \]
where $T_f$ is the unique bounded linear operator satisfying $
T_f\circ\delta_M= f.$
Furthermore, $
\|T_f\|=\|f\|_{\operatorname{L}}, $
and the inverse map is given by $
T \to T\circ\delta_M$ (see \cite{CM}).
	For distinct points $ t, s \in M$, the element $m_{ts}= \frac{\delta_t- \delta_s}{d(t,s)}$ is called the molecule associated with the pair $(t,s).$ The set of all molecules denoted by $\operatorname{Mol}(M). $ We recall the following lemma useful for our study. 
    
    \begin{lemma}\cite[Lemma 4.1]{GPR}
     Let $M$ be a pointed metric space. If $\phi$ is a strongly exposed point of $B_{\mathcal{F}(M)}$, then $\phi \in \operatorname{Mol}(M).$    
    \end{lemma}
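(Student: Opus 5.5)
The statement is the cited lemma from \cite{GPR}: strongly exposed points of $B_{\mathcal{F}(M)}$ are molecules. The plan has two steps. First, show that $B_{\mathcal{F}(M)}$ is the closed convex hull of $\operatorname{Mol}(M)$. Second, use a general fact about strongly exposed points of closed convex hulls.

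\textbf{Step 1: $B_{\mathcal{F}(M)} = \overline{\mathrm{conv}}\,\operatorname{Mol}(M)$.} Each molecule $m_{ts}$ has norm at most one. Suppose the inclusion $\overline{\mathrm{conv}}\,\operatorname{Mol}(M) \subseteq B_{\mathcal{F}(M)}$ were strict. The Hahn--Banach separation theorem would then give an $f \in \operatorname{Lip}_0(M) = \mathcal{F}(M)^*$ and a point $\mu$ of the unit ball with
\[
\sup_{t\neq s} \frac{f(t)-f(s)}{d(t,s)} < f(\mu) \le \|f\|_L .
\]
This is impossible, because the left-hand side equals $\|f\|_L$ by the definition of the Lipschitz norm.

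\textbf{Step 2: the general lemma.} Let $C=\overline{\mathrm{conv}}(D)$ and let $\phi$ be strongly exposed in $C$ by a functional $f$. Then $\phi\in\overline{D}$. To see this, note that $\sup_C f = \sup_D f = f(\phi)$. So we can pick $d_n\in D$ with $f(d_n)\to f(\phi)$. Strong exposure then forces $d_n\to\phi$.

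\textbf{Combining.} Apply Step 2 with $D=\operatorname{Mol}(M)$ and the exposing functional $f\in\operatorname{Lip}_0(M)$. This produces molecules $m_{t_ns_n}\to\phi$ in norm, where $\|\phi\|=1$. It remains to upgrade this to $\phi\in\operatorname{Mol}(M)$. I expect this to be the main obstacle, since $\operatorname{Mol}(M)$ need not be norm closed in general.

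\textbf{The closure step.} First I would try to show that the pairs $(t_n,s_n)$ are Cauchy in $M\times M$, so that they converge in the completion $\overline{M}$. The natural tool is the norm of a difference of molecules. For pairs of points that are far apart relative to their separations, this norm is bounded below by a constant.

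Concretely, I would use test functions $g\in B_{\operatorname{Lip}_0(M)}$ of the form $g=\max(0,r-d(\cdot,t_n))$ together with modifications anchored at $0$. If $m_{t_ns_n}$ and $m_{t_ks_k}$ are close, such functions force $d(t_n,t_k)$ and $d(s_n,s_k)$ to be small relative to $d(t_n,s_n)$. One also has to rule out $d(t_n,s_n)\to 0$. Molecules with shrinking separation can converge, but only to elements that are not strongly exposed. I would handle this by a direct argument: if $d(t_n,s_n)\to0$, then every $f$ exposing $\phi$ has nearby slopes at arbitrarily small scales. From this I would construct another element of the unit ball with $f$-value close to $1$ but far from $\phi$, for example a convex combination of molecules along a near-geodesic. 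That contradicts strong exposure.

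\textbf{Conclusion.} Once the pairs converge to a pair $(t,s)$ of distinct points in the completion, we get $\phi=m_{ts}$. Since $\mathcal{F}(M)=\mathcal{F}(\overline{M})$ isometrically, this is a molecule, as claimed.
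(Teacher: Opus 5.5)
Your Steps 1 and 2 are correct and form the standard reduction. (The paper gives no proof of this lemma; it only quotes it.) They reduce the claim to showing that a norm limit $\phi$ of molecules $m_{t_ns_n}$ with $\|\phi\|=1$ is itself a molecule. That closure step is only sketched, and the sketch has a genuine gap.

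\textbf{The shrinking-separation case.} You assert that molecules with shrinking separation can converge. You then propose to exclude $d(t_n,s_n)\to0$ using convex combinations along near-geodesics. That construction is not available in general: a metric space need not contain near-geodesics at all. For example, take pairs $x_n,y_n$ with $d(x_n,y_n)=1/n$ and all other distances equal to $1$. The assertion itself is also false in norm, as the estimate below shows.

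\textbf{Uncontrolled separations.} Your plan never addresses $d(t_n,s_n)\to\infty$ or oscillating separations. Unless $d(t_n,s_n)$ converges to a positive finite limit, estimates that are ``small relative to $d(t_n,s_n)$'' do not make $(t_n)$ and $(s_n)$ Cauchy.

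\textbf{The missing estimate.} Write $d_n=d(t_n,s_n)$ and take $g=\min\{d(\cdot,s_k),d_k\}$, shifted to vanish at $0$. Then $g$ is $1$-Lipschitz, $\langle g,m_{t_ks_k}\rangle=1$, and $0\le g\le d_k$, so
\[
\|m_{t_ks_k}-m_{t_ns_n}\|\ \ge\ 1-\frac{d_k}{d_n}\qquad\text{whenever } d_k\le d_n .
\]
Hence a norm-Cauchy sequence of molecules has $(\log d_n)$ Cauchy, i.e.\ $d_n\to L\in(0,\infty)$. This disposes of every degenerate case at once, with no further use of strong exposure.

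\textbf{Finishing the argument.} Once $d_n\to L$, your bump functions do work. The function $h=\max\{0,d_n-d(\cdot,t_n)\}$ satisfies $\langle h,m_{t_ns_n}\rangle=1$ and $\langle h,m_{t_ks_k}\rangle\le (d_n-d(t_n,t_k))^+/d_k$, which forces $d(t_n,t_k)\to0$. The function $-\max\{0,d_n-d(\cdot,s_n)\}$ does the same for $(s_n)$. Alternatively, Kantorovich--Rubinstein duality gives directly
\[
\|(\delta_{t_n}-\delta_{s_n})-(\delta_{t_k}-\delta_{s_k})\|=\min\{d_n+d_k,\ d(t_n,t_k)+d(s_n,s_k)\}.
\]
Hence $t_n\to t$ and $s_n\to s$ in $\overline{M}$ with $d(t,s)=L>0$, and $\phi=m_{ts}$.

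\textbf{Completeness.} $m_{ts}$ is a molecule of $\overline{M}$, and your closing sentence does not place it in $\operatorname{Mol}(M)$. The lemma as stated needs $M$ complete, or $\operatorname{Mol}(\overline{M})$ in place of $\operatorname{Mol}(M)$. For $M=\{0\}\cup\{1+1/n:n\in\mathbb{N}\}\subset\mathbb{R}$ one has $\mathcal{F}(M)\cong\ell_1$, with $\delta_1=m_{1,0}$ corresponding to a unit vector. So $\delta_1$ is strongly exposed in $B_{\mathcal{F}(M)}$, yet $1\notin M$ and $\delta_1\notin\operatorname{Mol}(M)$.
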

  We refer the readers to \cite{GPR} on more about when such a molecule be a strongly exposed points of $B_{\mathcal{F}(M)}.$  
    We say that a Lipschitz function $f \in \operatorname{Lip}_0(M, \mathbb{Y}) $ is of finite-rank if $ dim~span\,f(M)< \infty $ is and the space of all finite-rank Lipschitz map ius denoted by $\operatorname{Lip}_0^F(M, \mathbb{Y}).$

The isometric identification between the spaces $\operatorname{Lip}_0(M, \mathbb{Y})$ and $\mathbb{L}(\mathcal{F}(M), \mathbb{Y})$ allows us to transfer the results obtained above for subspaces of $\mathbb L(\mathbb X,\mathbb Y)$ to the setting of Lipschitz spaces. In particular, since 
\[
f \in\operatorname{Lip}_0^F(M,\mathbb Y)
\quad\Longleftrightarrow\quad
T_f\in\mathbb{F}(\mathcal F(M),\mathbb Y),
\]
the hypotheses appearing in the preceding results have their natural counterparts in terms of $\operatorname{Lip}_0(M)$ and $\operatorname{Lip}_0^F(M,\mathbb Y)$. We, therefore, obtain the following consequences of Theorem \ref{operator:main} for  Lipschitz spaces.
    
	\begin{theorem}
	     Let $M$ be a  pointed metric space and $\mathbb{Y}$ be a Banach space. Let $\mathcal{W}$ be a subspace of $\operatorname{Lip}_0(M, \mathbb{Y})$ containing $\operatorname{Lip}^F_0(M,\mathbb{Y}).$ 	Suppose that either of the following condition holds: 
		\begin{itemize}
			\item[(i)]  $\mathbb{Y}$ has BCP and there exists a countable set of strongly exposed points of $B_{\mathcal{F}(M)}$ separating $\operatorname{Lip}_0(M).$
			
			\item[(ii)] $\operatorname{Lip}_0(M)$ has the BCP and there exists a countable set of weak*-strongly exposed points of $B_{\mathbb{Y}^*}$ separating $\mathbb{Y}^{**}.$
		\end{itemize}
		Then $\mathcal{W}$ has the BCP.
	\end{theorem}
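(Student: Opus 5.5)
The plan is to transfer the statement to operator spaces through the linearization isometry $f \mapsto T_f$ from $\operatorname{Lip}_0(M,\mathbb{Y})$ onto $\mathbb{L}(\mathcal{F}(M),\mathbb{Y})$, and then apply Theorem \ref{operator:main} with $\mathbb{X}=\mathcal{F}(M)$. Since the BCP is preserved by onto linear isometries (balls are mapped to balls of the same radius, the origin is fixed, and the unit sphere is mapped onto the unit sphere), it suffices to show that the image $\widetilde{\mathcal{W}} := \{T_f : f \in \mathcal{W}\}$ has the BCP. The map is linear, so $\widetilde{\mathcal{W}}$ is a subspace of $\mathbb{L}(\mathcal{F}(M),\mathbb{Y})$. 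By the equivalence recorded just before the statement, $f \in \operatorname{Lip}_0^F(M,\mathbb{Y})$ if and only if $T_f \in \mathbb{F}(\mathcal{F}(M),\mathbb{Y})$. Surjectivity of the identification then gives $\mathbb{F}(\mathcal{F}(M),\mathbb{Y}) \subseteq \widetilde{\mathcal{W}}$.

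Next we translate the two hypotheses using the isometric identification $\mathcal{F}(M)^* \cong \operatorname{Lip}_0(M)$. In case (i), a countable set of strongly exposed points of $B_{\mathcal{F}(M)}$ that separates $\operatorname{Lip}_0(M)$ separates $\mathcal{F}(M)^*$. Together with the BCP of $\mathbb{Y}$, this is exactly hypothesis (i) of Theorem \ref{operator:main} for $\mathbb{X}=\mathcal{F}(M)$. In case (ii), the BCP of $\operatorname{Lip}_0(M)$ is, via the same isometry, the BCP of $\mathcal{F}(M)^*$. The condition on $\mathbb{Y}$ is unchanged, so this is hypothesis (ii) of Theorem \ref{operator:main}. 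In either case Theorem \ref{operator:main} yields that $\widetilde{\mathcal{W}}$ has the BCP, and pulling back along the isometry gives the BCP of $\mathcal{W}$.

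The main point to verify carefully is the finite-rank correspondence. If $f(M)$ spans a finite-dimensional space $E$, then $T_f$ maps $\operatorname{span}\{\delta_t\}$ into $E$. Since $E$ is closed and $T_f$ is continuous, $T_f(\mathcal{F}(M)) \subseteq E$. Conversely, if $T_f$ has finite rank, then $f(M)=T_f(\delta_M(M))$ lies in the finite-dimensional range of $T_f$. We also need that the duality $\mathcal{F}(M)^*=\operatorname{Lip}_0(M)$ is the pairing $\langle g,\delta_t\rangle = g(t)$, so that ``separating $\operatorname{Lip}_0(M)$'' means the same as separating $\mathcal{F}(M)^*$. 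The remaining steps are formal.
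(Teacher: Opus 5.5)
Your proposal is correct and takes the same route as the paper: you transfer $\mathcal{W}$ through the onto isometry $f\mapsto T_f$ into $\mathbb{L}(\mathcal{F}(M),\mathbb{Y})$, match the hypotheses via $f\in\operatorname{Lip}_0^F(M,\mathbb{Y})\iff T_f\in\mathbb{F}(\mathcal{F}(M),\mathbb{Y})$ and $\mathcal{F}(M)^*\cong\operatorname{Lip}_0(M)$, and then invoke Theorem \ref{operator:main}. Your explicit checks of the finite-rank correspondence and of the inclusion $\mathbb{F}(\mathcal{F}(M),\mathbb{Y})\subseteq\widetilde{\mathcal{W}}$ supply details that the paper leaves implicit.
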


     \begin{cor}\label{cor:lip}
     Let $M$ be a  pointed metric space and $\mathbb{Y}$ be a Banach space. Suppose that either $\mathcal{F}(M)$ or $\mathbb{Y}^*$ has RNP. Let $\mathcal{W}$ be a subspace of $\operatorname{Lip}_0(M, \mathbb{Y})$ containing $\operatorname{Lip}^F_0(M,\mathbb{Y}).$ Then  $\mathcal{W}$ has the BCP if and only if both $\operatorname{Lip}_0(M)$ and $\mathbb{Y}$ have the BCP.
    \end{cor}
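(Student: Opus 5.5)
The plan is to transfer Corollary \ref{RNP} through the linearization isometry $f \mapsto T_f$ from $\operatorname{Lip}_0(M,\mathbb{Y})$ onto $\mathbb{L}(\mathcal{F}(M),\mathbb{Y})$. First, observe that the BCP is invariant under surjective linear isometries: balls go to balls of the same radius, the origin is fixed, and unit spheres correspond. Hence $\mathcal{W}$ has the BCP if and only if its image $\widetilde{\mathcal{W}} := \{T_f : f \in \mathcal{W}\}$ does. This image is a subspace of $\mathbb{L}(\mathcal{F}(M),\mathbb{Y})$. Since $f \in \operatorname{Lip}_0^F(M,\mathbb{Y})$ if and only if $T_f \in \mathbb{F}(\mathcal{F}(M),\mathbb{Y})$, and $\mathcal{W} \supseteq \operatorname{Lip}_0^F(M,\mathbb{Y})$, we get $\widetilde{\mathcal{W}} \supseteq \mathbb{F}(\mathcal{F}(M),\mathbb{Y})$.

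The one point that deserves a line of justification is the equivalence $f$ finite-rank $\iff T_f$ finite-rank. By construction, the range of $T_f$ contains $f(M)$. It is also contained in the closed span of $f(M)$, because $\mathcal{F}(M)$ is the closed span of the $\delta_t$ and $T_f\delta_t = f(t)$. If $\dim \operatorname{span} f(M) < \infty$, this span is closed, so the range of $T_f$ equals it and is finite-dimensional. Conversely, if $T_f$ has finite rank, then $f(M)$ lies in a finite-dimensional space.

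Next, apply Corollary \ref{RNP} with $\mathbb{X} = \mathcal{F}(M)$. The hypothesis that $\mathcal{F}(M)$ or $\mathbb{Y}^*$ has the RNP is exactly the hypothesis of that corollary. It follows that $\widetilde{\mathcal{W}}$ has the BCP if and only if both $\mathcal{F}(M)^*$ and $\mathbb{Y}$ have the BCP. Finally, $\mathcal{F}(M)^*$ is isometrically isomorphic to $\operatorname{Lip}_0(M)$, so by the isometric invariance again, $\mathcal{F}(M)^*$ has the BCP if and only if $\operatorname{Lip}_0(M)$ does.

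Chaining these equivalences gives the statement. No step is a genuine obstacle; the only care needed is the finite-rank correspondence and the isometry invariance of the BCP, both handled above.
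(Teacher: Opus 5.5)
Your proposal is correct and follows essentially the paper's route. The paper, like you, transfers the statement through the isometry $f\mapsto T_f$ of $\operatorname{Lip}_0(M,\mathbb{Y})$ onto $\mathbb{L}(\mathcal{F}(M),\mathbb{Y})$, which matches $\operatorname{Lip}_0^F(M,\mathbb{Y})$ with $\mathbb{F}(\mathcal{F}(M),\mathbb{Y})$, and then applies Corollary~\ref{RNP} with $\mathbb{X}=\mathcal{F}(M)$ and $\mathcal{F}(M)^*\cong\operatorname{Lip}_0(M)$; your justifications of the finite-rank correspondence and of the isometric invariance of the BCP supply details the paper leaves implicit.
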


   We refer \cite{AGPP} for a detailed study on the RNP of $\mathcal{F}(M).$ Recall from
\cite[Theorem 3.1]{AGPP} that
\[
\mathcal F(M)\text{ has the RNP}
\quad\Longleftrightarrow\quad
\overline{M}\text{ is purely $1$-unrectifiable},
\]
where $\overline{M}$ denotes the completion of $M$. For numerous examples of purely $1$-unrectifiable metric spaces, we refer to \cite[3.3.19-3.3.21]{F} and the references therein.

We now present some conditions for the space  $\operatorname{Lip}_0(M)$ having the BCP, as a direct consequence of Proposition \ref{Proposition: SE(X)} and  
 Remark \ref{remark1}.  

\begin{cor}\label{cor:lip2}
    Let $M$ be a pointed metric space. Then 
    \begin{itemize}
        \item [(i)] If there exists a countable strongly exposed points  of $B_{\mathcal{F}(M)}$ which separates $\operatorname{Lip}_0(M).$ Then $\operatorname{Lip}_0(M)$ has the BCP.

        \item[(ii)] Whenever $\mathcal{F}(M)$ has the RNP. Then $\operatorname{Lip}_0(M)$ has the BCP if and only if there exists a countable set of strongly exposed points of $B_{\mathcal{F}(M)}$ separating $\operatorname{Lip}_0(M).$
    \end{itemize}
\end{cor}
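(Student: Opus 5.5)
Both parts of Corollary~\ref{cor:lip2} come from applying the operator-space results with $\mathbb{X}=\mathcal{F}(M)$, using the isometric identification $\mathcal{F}(M)^*=\operatorname{Lip}_0(M)$. Throughout, ``separating $\operatorname{Lip}_0(M)$'' means separating $\mathcal{F}(M)^*$ under this identification. So the hypotheses translate directly into those of Proposition~\ref{Proposition: SE(X)} and Remark~\ref{remark1}.

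For part (i), apply Theorem~\ref{operator:main}(i) with $\mathbb{X}=\mathcal{F}(M)$, $\mathbb{Y}=\mathbb{R}$ and $\mathcal{W}=\mathbb{L}(\mathcal{F}(M),\mathbb{R})=\mathcal{F}(M)^*$. This $\mathcal{W}$ trivially contains the finite-rank operators. The scalar field $\mathbb{R}$ has the BCP: its unit sphere $\{\pm1\}$ is covered by $B(1,1)$ and $B(-1,1)$. The separating hypothesis is exactly the assumption of (i). Hence $\mathcal{F}(M)^*\cong\operatorname{Lip}_0(M)$ has the BCP, which is the first observation of Remark~\ref{remark1}. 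One can also argue directly. If $\{m_i\}$ are strongly exposed points with strongly exposing functionals $f_i\in\operatorname{Lip}_0(M)$, then Lemma~\ref{lemma:SE}(ii) gives $f_i^\perp=\{g: g(m_i)=0\}$. So $\bigcap_i f_i^\perp=\{0\}$ by separation, and Lemma~\ref{lemma:main} finishes.

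For part (ii), assume $\mathcal{F}(M)$ has the RNP. Then, by the standard density result recalled before Corollary~\ref{RNP}, $SE(\mathcal{F}(M))$ is dense in $\mathcal{F}(M)^*$. Proposition~\ref{Proposition: SE(X)} with $\mathbb{X}=\mathcal{F}(M)$ now gives the equivalence: $\mathcal{F}(M)^*=\operatorname{Lip}_0(M)$ has the BCP if and only if some countable set of strongly exposed points of $B_{\mathcal{F}(M)}$ separates $\mathcal{F}(M)^*=\operatorname{Lip}_0(M)$.

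The only point needing care is that the BCP is an isometric invariant, so it transfers along $\mathcal{F}(M)^*\cong\operatorname{Lip}_0(M)$. The same goes for the separation property, since it only involves the duality pairing $\langle f,\mu\rangle$. Both are immediate, so there is no real obstacle; the work was done in Proposition~\ref{Proposition: SE(X)} and Theorem~\ref{operator:main}. By \cite[Lemma 4.1]{GPR}, the strongly exposed points in question are necessarily molecules $m_{ts}$. The separating family can therefore be read as a countable set of pairs $(t_i,s_i)$ such that a function $g\in\operatorname{Lip}_0(M)$ with $g(t_i)=g(s_i)$ for all $i$ must vanish.
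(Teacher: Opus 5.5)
Your proposal is correct and follows essentially the paper's route. Part (i) is the first observation of Remark~\ref{remark1}; your specialization $\mathbb{Y}=\mathbb{R}$ in Theorem~\ref{operator:main}, or your direct argument via Lemma~\ref{lemma:SE}(ii) and Lemma~\ref{lemma:main}, just reaches it without passing through Theorem~\ref{JFA}. Part (ii) is Proposition~\ref{Proposition: SE(X)} applied to $\mathbb{X}=\mathcal{F}(M)$, using the density of $SE(\mathcal{F}(M))$ that the RNP provides, exactly as in the paper.
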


   We next record some concrete situations in which the hypotheses above are
satisfied. We first recall that, for $x,y,z\in M$, the Gromov product of $x$
and $y$ at $z$ is defined by
\[
(x,y)_z
=
\frac{1}{2}\big(d(x,z)+d(y,z)-d(x,y)\big).
\]
 A pointed metric space $M$ is said to be \emph{Gromov concave} \cite{CM} if, for every pair of distinct points $x,y\in M$, there exists $\varepsilon_{x,y}>0$ such that
\[
(x,y)_z
>
\varepsilon_{x,y}\min\{d(x,z),d(y,z)\}
\]
for every $z\in M\setminus\{x,y\}$. It is known that $M$ is Gromov concave if and only if every molecule of $M$ is a strongly exposed point of $B_{\mathcal F(M)}$ \cite[Theorem 5.4]{GPR}. 
Thus, for a Gromov concave metric space, we have the following result.

\begin{prop}
    Let $M$ be a separable pointed Gromov concave metric space. Then $\operatorname{Lip}_0(M)$ has the BCP. 
\end{prop}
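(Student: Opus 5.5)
We prove the statement through Corollary~\ref{cor:lip2}$(i)$: it suffices to exhibit a countable set of strongly exposed points of $B_{\mathcal{F}(M)}$ separating $\operatorname{Lip}_0(M)$. Since $M$ is Gromov concave, \cite[Theorem 5.4]{GPR} tells us that every molecule $m_{ts}$, $t\neq s$, is a strongly exposed point of $B_{\mathcal F(M)}$. So the whole task is to extract a countable family of molecules that separates $\operatorname{Lip}_0(M)=\mathcal F(M)^*$.

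Because $M$ is separable, we fix a countable dense subset $D\subset M$ and enlarge it if necessary so that it contains the base point $0$. We then consider the countable family
\[
\mathcal{M}_D=\{m_{ts}: t,s\in D,\ t\neq s\}\subset \operatorname{Mol}(M),
\]
all of whose members are strongly exposed points of $B_{\mathcal F(M)}$ by Gromov concavity.

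To check separation, we take $f\in \operatorname{Lip}_0(M)$ with $\langle f, m_{ts}\rangle=0$ for all $m_{ts}\in\mathcal M_D$. This means $f(t)=f(s)$ for all $t,s\in D$. Taking $s=0\in D$ and using $f(0)=0$, we get $f\equiv 0$ on $D$. Since $f$ is Lipschitz, hence continuous, and $D$ is dense in $M$, it follows that $f=0$. Thus $\mathcal M_D$ separates $\operatorname{Lip}_0(M)$, and Corollary~\ref{cor:lip2}$(i)$, which itself rests on Theorem~\ref{operator:main} combined with Theorem~\ref{JFA} as in Remark~\ref{remark1}, gives the BCP of $\operatorname{Lip}_0(M)$.

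The only delicate point is the bookkeeping: we must ensure that the base point lies in $D$, or else use differences with a point of $D$ and pass to the limit through continuity. We also need to confirm that the identification $\mathcal F(M)^*\cong\operatorname{Lip}_0(M)$ acts on molecules by $\langle f,m_{ts}\rangle=\frac{f(t)-f(s)}{d(t,s)}$. This follows directly from $\langle f,\delta_t\rangle=f(t)$, so no real obstacle is expected.
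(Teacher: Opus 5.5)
Your proposal is correct and follows the paper's proof essentially step for step. Both take a countable dense set, form the countable family of molecules on it, use Gromov concavity via \cite[Theorem 5.4]{GPR} to get that these molecules are strongly exposed, check separation by continuity, and conclude with Remark~\ref{remark1} (equivalently Corollary~\ref{cor:lip2}$(i)$). Adding the base point to $D$ is harmless but unnecessary: if $f$ is constant on a dense set, continuity makes it constant on $M$, and $f(0)=0$ then gives $f\equiv 0$.
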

\begin{proof}
Let $\{t_n:n\in\mathbb N\}$ be a countable dense subset of $M$. Then $
\mathcal{A}=\{m_{t_nt_k}:n,k\in\mathbb N,\ n\neq k\} $
is a countable family of molecules separating $\operatorname{Lip}_0(M)$.
Since $M$ is Gromov concave, every molecule is a strongly exposed point
of $B_{\mathcal F(M)}$. Hence $\mathcal A$ is a countable family of
strongly exposed points of $B_{\mathcal F(M)}$ separating
$\operatorname{Lip}_0(M)$. Therefore, by Remark~\ref{remark1},
$\operatorname{Lip}_0(M)$ has the BCP.
\end{proof}

Following \cite[Th. 2.1]{CGMR}, the unit sphere of the Euclidean plane endowed with the inherited Euclidean
metric is a Gromov concave space.

\section*{Stability of BCP under the $p$-sum.}

We next study the stability of the BCP under the $p$-sum of Banach spaces. 
Let $\{\mathbb{X}_n\}_{n \in \mathbb{N}}$ be a sequence of Banach spaces. 
Then for any $1 \leq p < \infty,$
\[
\Big(\sum_{n=1}^{\infty}\oplus \mathbb{X}_n\Big)_p
:= \bigg\{ \widetilde{x}= (x_1, x_2, \ldots): x_i \in \mathbb{X}_i, \,
\sum_{i=1}^{\infty} \|x_i\|^p < \infty\bigg\},
\]
\[
\Big(\sum_{n=1}^{\infty}\oplus \mathbb{X}_n\Big)_\infty
:= \bigg\{ \widetilde{x}= (x_1, x_2, \ldots): x_i \in \mathbb{X}_i, \,
\sup \{\|x_i\|: i \in \mathbb{N}\} < \infty\bigg\},
\]
\[
\Big(\sum_{n=1}^{\infty}\oplus \mathbb{X}_n\Big)_{c_{00}}
:= \bigg\{ \widetilde{x}= (x_1, x_2, \ldots): x_i \in \mathbb{X}_i, \, x_i=0 \, \text{for all but finitely many i}\bigg\}.
\]
For any $\widetilde{x} \in \Big(\sum_{n=1}^{\infty}\oplus \mathbb{X}_n\Big)_p,$ $
\|\widetilde{x}\|= \left(\sum_{i=1}^{\infty} \|x_i\|^p\right)^{1/p},$
and whenever $\widetilde{x} \in \Big(\sum_{n=1}^{\infty}\oplus \mathbb{X}_n\Big)_\infty,$ $
\|\widetilde{x}\|= \sup_{i\in \mathbb{N}} \|x_i\|.$

In \cite{LZ}, the authors have established the stability of the BCP for product spaces equipped with the $p$-norm. It was shown that 
$\Big(\sum_{n=1}^{\infty}\oplus \mathbb{X}_n\Big)_p$ has the BCP if and only if each $\mathbb{X}_n$ has the BCP. Moreover, it was observed that the ball covering centers can be chosen from 
$\Big(\sum_{n=1}^{\infty}\oplus \mathbb{X}_n\Big)_{c_{00}}$.

Here we provide an alternative proof of the same result, \emph{along with a refinement of the choice of the centers,} by using the characterization of the BCP obtained in Lemma~\ref{lemma:main}. Indeed, we show that whenever $1 < p \leq \infty$, the ball covering centers can be chosen among elements having exactly one nonzero coordinate. To be more precise, for $x \in \mathbb{X}_k,$ we denote by
\[
x^{(k)}:=(0,\ldots,0,x,0,\ldots)
\]
the element of $\Big(\sum_{n=1}^{\infty}\oplus \mathbb{X}_n\Big)_p$ whose $k$-th coordinate is $x$ and the remaining coordinates are zero. Our proof shows that the ball covering centers may be taken from the collection of such elements. On the other hand, our method also shows that when $p=1$, it is not possible to choose all ball covering centers from this collection. 

\begin{theorem}\label{theorem:Countable Banach space}
Let $ 1< p\leq \infty $ and let $\{\mathbb{X}_n\}_{n\in\mathbb{N}}$ be a sequence of Banach spaces.
Let $
\mathcal{X}=\Big(\sum_{n=1}^{\infty}\oplus \mathbb{X}_n\Big)_p .$
Then $\mathcal{X}$ has the BCP if and only if each
$\mathbb{X}_n$ has the BCP. Moreover, the ball covering centers for $ \mathcal{X} $ can be chosen among elements of $ \mathcal{X} $ having exactly one nonzero coordinate.
\end{theorem}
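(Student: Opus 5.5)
The plan is to prove both directions through the orthogonality criterion $(iv)$ of Lemma~\ref{lemma:main}. In the sufficiency direction the witnessing family will be built directly out of elements $x^{(k)}$, so the claim about the centers comes for free. Indeed, the proof of $(iv)\implies(i)$ in Lemma~\ref{lemma:main} uses only the centers $\pm n z_i$. Hence if every $z_i$ has exactly one nonzero coordinate, so does every center.

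\emph{Necessity.} Suppose $\mathcal{X}$ has the BCP, and take $\{\widetilde{x}_i\}\subset\mathcal{X}\setminus\{0\}$ with $\bigcap_i \widetilde{x}_i^\perp=\{0\}$. Write $\widetilde{x}_i=(x_{i,1},x_{i,2},\ldots)$ and fix $k$. The key elementary observation uses only monotonicity of the $p$-norm (any $1\le p\le\infty$) in each coordinate:
\begin{itemize}
\item if $x_{i,k}=0$, then $\|\widetilde{x}_i+\lambda u^{(k)}\|\ge\|\widetilde{x}_i\|$ for every $u\in\mathbb{X}_k$;
\item if $u\in x_{i,k}^\perp$, then $\|\widetilde{x}_i+\lambda u^{(k)}\|\ge\|\widetilde{x}_i\|$ as well, since only the $k$-th coordinate changes and its norm does not decrease.
\end{itemize}
So $u\in\bigcap_{i:\,x_{i,k}\neq0}x_{i,k}^\perp$ forces $u^{(k)}\in\bigcap_i\widetilde{x}_i^\perp=\{0\}$. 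In particular the index set $\{i: x_{i,k}\ne 0\}$ is nonempty, since otherwise every $u^{(k)}$ would lie in the intersection. The countable family $\{x_{i,k}: x_{i,k}\neq0\}\subset\mathbb{X}_k\setminus\{0\}$ therefore has trivial orthogonal intersection, and Lemma~\ref{lemma:main} gives the BCP of $\mathbb{X}_k$.

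\emph{Sufficiency.} For each $n$ choose $\{z_{n,i}\}_i\subset\mathbb{X}_n\setminus\{0\}$ with $\bigcap_i z_{n,i}^\perp=\{0\}$. I will show that the countable family $\{z_{n,i}^{(n)}: n,i\in\mathbb{N}\}$ satisfies condition $(iv)$. Let $\widetilde{y}=(y_m)$ lie in every $(z_{n,i}^{(n)})^\perp$. It suffices to show $y_n\in z_{n,i}^\perp$ for all $n,i$, which gives $y_n=0$ for all $n$.

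Suppose not, say $\|z+\lambda_0 y_n\|<\|z\|$ with $z=z_{n,i}$. Put $c:=\sum_{m\ne n}\|y_m\|^p$ for $p<\infty$, or $c:=\sup_{m\ne n}\|y_m\|$ for $p=\infty$. Then
\[
\|z^{(n)}+t\lambda_0\widetilde{y}\|^p=\|z+t\lambda_0y_n\|^p+|t\lambda_0|^p c \qquad (p<\infty),
\]
and $\|z^{(n)}+t\lambda_0\widetilde{y}\|=\max\{\|z+t\lambda_0y_n\|,\,|t\lambda_0|c\}$ for $p=\infty$. The function $\varphi(t)=\|z+t\lambda_0y_n\|$ is convex with $\varphi(1)<\varphi(0)$, so $\varphi(t)\le\|z\|-\delta t$ on $[0,1]$ with $\delta=\|z\|-\varphi(1)>0$. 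Hence $\varphi(t)^p\le\|z\|^p-\delta' t$ for small $t$, a first-order decrease. The competing term is $O(t^p)$ with $p>1$, and for $p=\infty$ it is $O(t)$ but tends to $0$ while the first term stays near $\|z\|$. In either case, for small $t>0$ the total is $<\|z\|$ (resp.\ $<\|z\|^p$), contradicting $z^{(n)}\perp_B\widetilde{y}$.

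This first-order versus $p$-th order comparison is the only nontrivial step. It is also exactly where $p=1$ breaks down: there the tail term is linear in $t$ and can outweigh the gain in the $n$-th coordinate, which matches the remark that single-coordinate centers do not suffice when $p=1$.
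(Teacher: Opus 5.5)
Your proposal is correct and follows essentially the same route as the paper. Both directions go through criterion $(iv)$ of Lemma~\ref{lemma:main}. Necessity is obtained by testing with the single-coordinate vectors $u^{(k)}$. Sufficiency uses the family of single-coordinate elements together with a convexity argument that pits the first-order decrease in the active coordinate against the $O(t^p)$ tail (or the vanishing tail when $p=\infty$). Two points are slightly tidier than the paper's wording without changing the argument: you explicitly discard the indices with $x_{i,k}=0$ in the necessity step, and you observe that the actual centers are the multiples $\pm n z_i$.
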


\begin{proof}
Let us first prove the necessary part.
Since $\mathcal{X}$ has the BCP, it follows from Lemma $\ref{lemma:main}$ that there exists a
sequence $\{\widetilde{x}_i\}_{i\in\mathbb{N}}\subset \mathcal{X}\setminus\{0\}$ such that
\[
\bigcap_{i\in\mathbb{N}}\widetilde{x}_i^{\perp}=\{0\}, \quad \text{where each} \, \,  \widetilde{x}_i=(x_1^i,x_2^i,\ldots) \, \text{with} \, \, x_j^i\in\mathbb{X}_j.
\]
Let $k \in \mathbb{N} $ be arbitrary but fixed. We  will show that $ \mathbb{X}_{k} $ has the BCP. Consider the sequence $\{ x_{k}^i\}_{i \in \mathbb{N}} \subset \mathbb{X}_{k}. $ For each $ i \in \mathbb{N} $ if $ x_{k}^i= 0, $  then it will contradict that $ \mathcal{X} $ has the BCP. Let  $y \in \cap_{i \in \mathbb{N}} (x_{k}^i)^\perp.$ Now consider the element $ \widetilde{y}= (0, 0, \ldots, 0, y, 0, \ldots) \in \mathcal{X},$ where $y$ appears in the $k$-th coordinate. Observe that for any $ i \in \mathbb{N}$ and $ 1 < p < \infty,$
\begin{eqnarray*}
\| \widetilde{x_i} + \lambda \widetilde{y}\|^p &=& \| ( x_1^i, x_2^i, \ldots, x_{k-1}^i, x_{k}^i + \lambda y , x_{k+1}^i, \ldots) \|^p\\
&=& \sum_{j \in \mathbb{N}, j \neq k} \| x_j^i\|^p + \|x_{k}^i + \lambda y\|^p\\
&\geq & \sum_{j \in \mathbb{N}, j \neq k} \| x_j^i\|^p + \|x_{k}\|^p \\&=& \|\widetilde{x_i}\|^p.
\end{eqnarray*}
So, for any scalar $\lambda$, $\| \widetilde{x_i} + \lambda \widetilde{y}\| \geq \| \widetilde{x_i}\|. $ In other words, $\widetilde{y} \in \widetilde{x_i}^\perp,$ for any $i 
\in \mathbb{N}.$ This implies $\widetilde{y}=0 $ and so, $y=0.$ Therefore, $\cap_{i \in \mathbb{N}} (x_{k}^i)^\perp = \{0\}.$ Following Lemma \ref{lemma:main}, $\mathbb{X}_k$ has the BCP. Since $ k \in \mathbb{N} $ is arbitrary, it establishes the necessary part for $ 1 \leq p < \infty. $ For $p = \infty$, we can provide similar argument with some notational modifications which are self-evident. This establishes the necessary part of the theorem.\\

Conversely, suppose that each $\mathbb{X}_k$ has the BCP. 
Then for every $k\in\mathbb{N},$ there exists a countable set 
$\{x_k^i\}_{i\in\mathbb{N}} \subset \mathbb{X}_k \setminus \{0\}$ such that
\[
\bigcap_{i\in\mathbb{N}} (x_k^i)^{\perp} = \{0\}.
\]
For each $k \in \mathbb{N}$, define $
S_k
=
\big\{ (0,0,\ldots,0,x_k^i,0,\ldots) \in \mathcal{X} : i \in \mathbb{N} \big\},$
where $x_k^i$ appears in the $k$-th coordinate.
Now define
\[
\mathcal{S} = \bigcup_{k \in \mathbb{N}} S_k.
\]
Clearly, $\mathcal{S}$ is a countable subset of $\mathcal{X}$.
Let $\tilde{w}=(w_1,w_2,\ldots)\in\mathcal{X}$ be such that $
\tilde{w} \in \bigcap_{s\in\mathcal{S}} s^{\perp}.$
We claim that $\tilde{w}=0$. Let $k \in \mathbb{N}$ be arbitrary but fixed.
For any $i \in \mathbb{N},$ let $\widetilde{z_i}= (0, 0, \ldots, x_{k}^i, 0, \ldots) \in S_{k}.$  Let us now consider the following two cases: 

\textbf{Case 1:} Let $1 < p < \infty. $  Observe that for any  scalar $\lambda,$
\[
\|\widetilde{z_i}+ \lambda \widetilde{w}\|^p= \| ( \lambda w_1, \lambda w_2, \ldots, \lambda w_{k-1}, x_{k}^i + \lambda w_{k}, w_{k+1}, \ldots)\|^p = \sum_{j \neq k} \|\lambda w_i\|^p + \|x_{k}^i+ \lambda w_{k}\|^p.
\]
As $\widetilde{w} \in \widetilde{  z_i}^{\perp},$ it follows that for any scalar $\lambda$, 
\[
\|\widetilde{z_i}+ \lambda \widetilde{w}\| \geq \|\widetilde{z_i}\|= \|x_k^i\|.
\]
Therefore, for any scalar $\lambda$, we obtain
\begin{equation}\label{eqn:1}
\sum_{j \neq k} \|\lambda w_j\|^p + \|x_{k}^i+ \lambda w_{k}\|^p \geq \|x_k^i\|^p.
\end{equation}
We claim that for any scalar $\lambda,$ $\|x_k^i + \lambda w_k \| \geq \|x_k^i\|.$ Suppose on the contrary that there exists $\lambda_0$ such that $\|x_k^i+ \lambda_0 w_k\|< \|x_k^i\|.$ Let $\epsilon= \|x_k^i\|^p- \|x_k^i+ \lambda_0 w_k\|^p> 0.$ Using a standard convexity argument, we get that for any natural number $n > 1,$ 
\[
\|x_k^i + \frac{\lambda_0}{n} w_k \|^p \leq \|x_k^i\|^p - \frac{\epsilon}{n}.
\]
 Now take $n_0 \in \mathbb{N}$ such that $n_0^{p-1} > \frac{1}{\epsilon} |\lambda_0|^{p-1} \sum_{j \neq k} \|w_j\|^p. $
 Therefore, 
 \begin{eqnarray*}
     \sum_{j \neq k} \|\frac{\lambda_0}{n_0} w_j\|^p + \|x_{k}^i+ \frac{\lambda_0}{n_0} w_{k}\|^p &\leq& \frac{|\lambda_0|^p}{n_0^p} \sum_{j \neq k} \| w_j\|^p + \|x_{k}^i\| - \frac{\epsilon}{n_0}\\ & < & \|x_k^i\|,
 \end{eqnarray*}
 which contradicts the inequality (\ref{eqn:1}). Therefore, for any scalar $\lambda,$ $\|x_k^i + \lambda w_k\| \geq \|x_k^i\|.$

 \textbf{Case 2:} Let $p=\infty.$  Since $\widetilde{w} \in \widetilde{  z_i}^{\perp},$ it follows that for any scalar $\lambda$,
 \begin{equation}\label{eqn:2}
     \|\widetilde{z_i} + \lambda \widetilde{w} \|_{\infty}= \sup \{ \| \lambda w_j\|, \|x_k^i+ \lambda w_k\|: j \neq k, j \in \mathbb{N}\} \geq \|x_k^i\|.
 \end{equation}
Proceeding as before, suppose on the contrary that there exists a scalar $\lambda_0$ such that $\|x_k^i+ \lambda_0 w_k\|< \|x_k^i\|$.  So, for any natural number $n > 1,$ $\|x_k^i + \frac{\lambda_0}{n} w_k\| \leq \|x_k^i\|.$  Now take $n_0 \in \mathbb{N}$ such that $ n_0 > \frac{1}{\|x_k^i\|} |\lambda_0| \sup \{\|w_j\|: j\in \mathbb{N}, j \neq k\}.$ Therefore, we obtain
\[
\sup \bigg\{ \|\frac{\lambda_0}{n_0} w_j\|, \|x_k^i+ \frac{\lambda_0}{n_0} w_k\|: j \neq k, j \in \mathbb{N} \bigg\} < \|x_k^i\|.
\]
This contradicts the inequality (\ref{eqn:2}) and hence, for any scalar $\lambda,$ $\|x_k^i + \lambda w_k\| \geq \|x_k^i\|.$
So, for any $1 < p \leq \infty,$ we have  $w_{k} \in (x_{k}^i)^\perp.$  
 Since $i \in \mathbb{N}$ is arbitrary, we obtain $$ w_{k} \in \cap_{i \in \mathbb{N}} (x_{k}^i)^\perp \implies w_{k}=0. $$  
Hence $w_k=0$ for every $k\in\mathbb{N}$, and therefore $\tilde{w}=0$.
Consequently,
\[
\bigcap_{s\in\mathcal{S}} s^{\perp}=\{0\},
\]
and following Lemma \ref{lemma:main},
$\mathcal{X}$ has the BCP. Moreover, from the arguments given in the sufficient part of the proof, it is evident that the ball covering centers can be taken from the set $\mathcal{S}.$
This completes the proof.
	\end{proof}

In light of the above result, it is important to note the specialty of the case $ p = 1. $ Note that for $p=1,$ all the ball covering centers can not be chosen from the set $\mathcal{S}.$ This is clear from the fact that even though the space $\ell_1$ has the BCP, we still have $ x= ( -\frac{1}{2}, \frac{1}{4}, \frac{1}{8}, \frac{1}{16}, \ldots ) \in \ell_1 $ such that $ x \in \cap_{i \in \mathbb{N}} e_i^\perp. $ Therefore, we obtain from Lemma \ref{lemma:main} that $\ell_1 $ does not admit the BCP with centers chosen from the $e_i$'s and their scalar multiples only. The following remark further elucidates this observation.

\begin{remark} \label{theorem:1-sum}
Let $\{\mathbb{X}_n\}_{n\in\mathbb{N}}$ be a sequence of Banach spaces.
Let $ \mathcal{X}=\Big(\sum_{n=1}^{\infty}\oplus \mathbb{X}_n\Big)_1 .$
As before, it is easy to see that if $\mathcal{X}$ has the BCP, then each $\mathbb{X}_n$ has the BCP. On the other hand, suppose that each
$\mathbb{X}_n$ has the BCP. Then For each $k\in\mathbb{N}, $ there exists a countable set 
$\{x_k^i\}_{i\in\mathbb{N}} \subset \mathbb{X}_k \setminus \{0\}$ such that
\[
S_{\mathbb{X}_k} \subset \bigcup_{i,j\in \mathbb{N}} B(n_j x_k^i, n_j \|x_k^i \|). 
\] 
 Take any $ \tilde{w}= (w_1,w_2,\dots,w_n,\ldots) \in S_{\mathcal{X}}, $ where $ w_i \in \mathbb{X}_i \setminus \{0\} $ for each $ i \in \mathbb{N}. $ 
Since $\mathbb{X}_t$ has the BCP for any $ t \in \mathbb{N},$  it is clear that there exist $x_t^{i_t} \in \mathbb{X}_t$ and $n_{t} \in \mathbb{N}$ such that 
$\|n_t x_t^{i_t} - w_t\| < \|n_t x_t^{i_t}\|.$ Let $\epsilon_t > 0$ such that 
\[
\|n_t x_t^{i_t} - w_t\| = \|n_t x_t^{i_t}\|- \epsilon_t.
\]
As $\lim_{n \to \infty} \|w_n\| =0,$ there exists $N \in \mathbb{N}$ such that 
\[
\sum_{k > N} \|w_k\| < \sum_{t=1}^{N} \epsilon_t.
\]
Now, taking $z= ( n_1x_1^{i_1}, n_2 x_2^{i_2} , \ldots, n_{N} x_{N}^{i_N}, 0, 0, \ldots) \in \mathcal{X}, $ we get that,
\begin{eqnarray*}
    \|z- \widetilde{w}\|_1 &=& \sum_{t=1}^{N} \| n_t x_t^{i_t} - w_t\| + \sum_{ k > N} \|w_k\|\\
    &=& \sum_{t=1}^N \|n_t x_t^{i_t} \| - \sum_{t=1}^N \epsilon_t + \sum_{k > N} \|w_k\|\\
    &< & \|z\|.
\end{eqnarray*}
Consider $\mathcal{S}$ to be the set of all finitely supported sequences of the form
\[
(n_1 x_1, \dots, n_j x_j, 0, 0, \dots) \in \mathcal{X},
\]
where $j \in \mathbb{N}$, $x_k \in \{x_k^i\}_{i \in \mathbb{N}}$, and $n_k \in \mathbb{N}$ for each $1 \le k \le j$.
This implies that $z \in \mathcal{S}$ and $ \widetilde{w} \in B(z, \|z\|).$ Since $\widetilde{w}$ is arbitrary, we get that  $ S_{\mathcal{X}} \subset \bigcup_{z\in \mathcal{S} } B(z,\|z\|). $ Therefore, each ball covering center can be chosen among elements having finitely many nonzero coordinates.
 \end{remark}


\section*{Section-II}

In this section, our primary goal is to study the minimal ball covering number of 
$\mathbb{L}(\mathbb{X}, \mathbb{Y})$ in the finite-dimensional setting. 
To this end, we first establish a characterization of finite ball coverings 
in a finite-dimensional Banach space in terms of Birkhoff-James orthogonality 
cones. This result will serve as a fundamental tool for all subsequent results in this section.

\begin{lemma}\label{lemma:orthogonality}

Let $\mathbb{X}$ be a finite-dimensional Banach space and let 
$\mathcal{A}\subset \mathbb{X}\setminus\{0\}$ be a compact set with the property that 
for every nonzero $x\in\mathbb{X}$ there exists $r_x>0$ such that $r_x x\in\mathcal{A}$. 
Then the following statements are equivalent:
\begin{itemize}
  
		\item[$(i)$] $ \mathcal{A} $ admits a ball covering consisting of $ m $ balls,
		
		\item[$(ii)$] There exist $ x_1, x_2, \ldots, x_m \in Sm(\mathbb{X}) $ such that $ \cap_{i=1}^m x_i^- = \{0\}, $

		\item[$(iii)$] There exist $ y_1, y_2,\ldots, y_m \in \mathbb{X} \setminus \{0\} $ such that $ \cap_{i=1}^m y_i^- = \{0\}, $

		
		\item[$(iv)$] There exist $ z_1, z_2,\ldots, z_m \in \mathbb{X} \setminus \{0\} $ such that $ \cap_{i=1}^m z_i^+ = \{0\}, $

		\item[$(v)$] There exist $ f_1, f_2,\ldots, f_m \in S_{\mathbb{X}^*} $ exposed points of $ B_{\mathbb{X}^*} $ such that $$ \cap_{i=1}^m \{ y \in \mathbb{X} : f_i(y) \leq 0 \} = \{0\}. $$ 
		
	\end{itemize}
	
\end{lemma}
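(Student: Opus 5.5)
The plan is to prove the cycle of equivalences (i)$\Rightarrow$(iii)$\Rightarrow$(i), (iii)$\Leftrightarrow$(iv), (ii)$\Rightarrow$(iii), (iii)$\Rightarrow$(ii), and then (ii)$\Leftrightarrow$(v). The guiding idea is the one behind Lemma~\ref{lemma:main}, with compactness of $\mathcal{A}$ replacing countability.

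\textbf{Covering and negative cones.} For (i)$\Rightarrow$(iii), suppose $\mathcal{A}\subset\cup_{i=1}^m B(y_i,r_i)$ with $r_i\le\|y_i\|$. Each $a\in\mathcal{A}$ then satisfies $\|y_i-a\|<\|y_i\|$ for some $i$, so $a\notin y_i^-$. Given a nonzero $x$, we have $r_x x\in\mathcal{A}$, and the cones $y_i^-$ are invariant under positive scaling. Hence $x\notin\cap_i y_i^-$, so $\cap_{i=1}^m y_i^-=\{0\}$. The equivalence (iii)$\Leftrightarrow$(iv) follows by replacing each vector with its negative, since $(-y)^+=y^-$. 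The implication (ii)$\Rightarrow$(iii) is trivial.

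For (iii)$\Rightarrow$(i), take $a\in\mathcal{A}$ and choose $i$ with $a\notin y_i^-$. Then $\|y_i+\lambda a\|<\|y_i\|$ for some $\lambda<0$, which rescales to $a\in B(ty_i,t\|y_i\|)$ for some $t>0$. By convexity the function $t\mapsto t\|y_i\|-\|ty_i-a\|$ is nondecreasing. Therefore the open balls $U_{i,n}=B(ny_i,n\|y_i\|)$ increase in $n$, and together they cover $\mathcal{A}$. Compactness of $\mathcal{A}$ gives a finite subcover. Using monotonicity, or directly Proposition~\ref{prop:compact}, we may keep a single ball $B(Ny_i,N\|y_i\|)$ for each $i$. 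This yields $m$ balls, none of which contains $0$.

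\textbf{Passing to smooth centers.} The implication (iii)$\Rightarrow$(ii) is the step I expect to be the main obstacle, because the condition $\cap y_i^-=\{0\}$ is not obviously stable under perturbation. I would route the argument through the covering. By (iii)$\Rightarrow$(i), we have $\mathcal{A}\subset\cup_{i=1}^m B(c_i,\|c_i\|)$. The function
\[
g(a)=\max_{1\le i\le m}\bigl(\|c_i\|-\|c_i-a\|\bigr)
\]
is continuous and strictly positive on the compact set $\mathcal{A}$, so $g\ge\delta>0$ there. Smooth points are dense in a finite-dimensional space, so we can choose $c_i'\in Sm(\mathbb{X})$ with $\|c_i-c_i'\|<\delta/2$. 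The triangle inequality then shows that $\mathcal{A}\subset\cup_i B(c_i',\|c_i'\|)$. Applying the argument of (i)$\Rightarrow$(iii) to these balls gives $\cap_i (c_i')^-=\{0\}$, which is (ii).

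\textbf{Dual formulation.} For (ii)$\Rightarrow$(v), let $x_i$ be smooth with $J(x_i)=\{f_i\}$. Lemma~\ref{functional}(i) gives $x_i^-=\{y: f_i(y)\le 0\}$. Moreover, $\{g\in B_{\mathbb{X}^*}: g(x_i)=\|x_i\|\}=\{f_i\}$, so $x_i$, viewed as an element of $\mathbb{X}^{**}=\mathbb{X}$, exposes $f_i$. Thus each $f_i$ is an exposed point of $B_{\mathbb{X}^*}$.

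For (v)$\Rightarrow$(iii), note that $\mathbb{X}$ is finite-dimensional and hence reflexive. So each exposed point $f_i$ is exposed by some $x_i\in S_{\mathbb{X}}$, and then $J(x_i)=\{f_i\}$. Lemma~\ref{functional}(i) again gives $x_i^-=\{f_i\le0\}$, and therefore $\cap_i x_i^-=\{0\}$.
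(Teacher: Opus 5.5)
Your proof is correct. It rests on the same core mechanism as the paper. First, $a\in B(c,r)$ with $r\le\|c\|$ forces $a\notin c^-$. Second, for (iii)$\Rightarrow$(i) a compactness argument with the nested balls $B(ny_i,n\|y_i\|)$ does exactly the job of Proposition~\ref{prop:compact}.

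The genuine difference is how you obtain smooth centers. The paper proves (i)$\Rightarrow$(ii) directly by citing \cite[Prop.~2.7]{CCS}, which says the centers of a finite covering can be taken smooth. You instead go (i)$\Rightarrow$(iii)$\Rightarrow$(i) and then perturb the centers. Compactness gives the uniform margin $\delta=\min_{a\in\mathcal{A}} g(a)>0$, and density of $Sm(\mathbb{X})$ lets you replace each center by a smooth one within $\delta/2$. This makes the lemma self-contained, at the cost of a detour through the covering.

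For (ii)$\Leftrightarrow$(v), the paper only records that a smooth $x$ yields an exposed $f$ with $x^-=\{f\le 0\}$. You also prove the converse, which the paper leaves implicit: by reflexivity each exposed $f_i$ is exposed by some $x_i\in S_{\mathbb{X}}$ with $J(x_i)=\{f_i\}$.

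Two minor remarks. The monotonicity of $t\mapsto t\|y_i\|-\|ty_i-a\|$ follows from the triangle inequality, so ``convexity'' is not really what is being used. Also, your (v)$\Rightarrow$(iii) actually lands in (ii), since the $x_i$ you produce are smooth.
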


\begin{proof}
	$(i) \implies (ii):$ Since $ \mathbb{X} $ is a finite-dimensional Banach space and $ \mathcal{A} $ admits a ball covering consisting of $ m $ balls, it follows from \cite [Prop. 2.7]{CCS} that there exist $ x_1,x_2,\ldots,x_n \in Sm(\mathbb{X}) $ such that $ \mathcal{A} \subset \cup_{i=1}^m B(x_i, r_{x_i}), $ where $ 0< r_{x_i} \leq \|x_i\| $ for each $ 1\leq i \leq m. $  So, for any $ y \in \mathcal{A} $ there exists $ x_j \in \{ x_1,x_2,\ldots,x_m \} $ such that $ y \in B(x_j, r_{x_j}). $ This implies that $ \| x_j -y \|< \|x_j \|, $ equivalently $ y \not\in x_j^-. $ Therefore, $ y \not\in \cap_{i=1}^m x_i^-. $ Since $ y\in \mathcal{A} $ is arbitrary, it follows that $ \mathcal{A} \cap ( \bigcap_{i=1}^m x_i^- ) = \emptyset. $ Then using the homogeneity of Birkhoff-James orthogonality, it is easy to verify that $ \cap_{i=1}^m x_i^- = \{0\}. $

	$(ii) \implies (iii)$ is immediate.

		 $(iii) \implies (i):$ As $ \cap_{i=1}^m y_i^- = \{0\}, $  for any $ x \in \mathcal{A} , $ there exists $ j \in \{ 1, 2,\ldots, m \} $ such that $ x \not\in y_j^-. $ This implies that there exists $ \lambda_x >0 $ such that $ \| y_j - \lambda_x x \| <\|y_j\|. $ Choosing $ \lambda_x = \frac{1}{t_x}, $ we get that $ \| t_x y_j - x \| < t_x \| y_j \|. $ Consequently, $ x \in B(t_x y_j, r_x ), $ for some $ 0< r_x \leq t_x \| y_j \|. $ Clearly, $ \mathcal{U} := \{ B( t_{x} y_j, r_x ): x \in \mathcal{A} \} $ is an open cover of $ \mathcal{A}. $ Since $ \mathcal{A} $ is compact, it has a finite sub-cover $ \mathcal{\widetilde{U}} = \{ B( t_{x_p} y_{x_p}, r_{x_p}): 1 \leq p \leq m_0 \}, $ where $ m_0 \in \mathbb{N}  $ and $ x_p \in \mathcal{A},$ for each $ 1 \leq p \leq m_0. $  We note that $ y_{x_p} \in \{ y_1,y_2,\ldots,y_m \} ,$  for each $ 1 \leq p \leq m_0. $ Now applying Proposition \ref{prop:compact}, for each $1 \leq i \leq m,$ there exists $\lambda_i > 0$ and $0 < r_i \leq \lambda_i \|y_i\|$ such that $ \mathcal{A} \subset \cup_{i=1}^m B(\lambda_i y_i, r_i).$

		$(iii) \iff (iv)$ follows directly   by taking $ z_i = -y_i $ for each $ 1\leq i \leq m. $

	 $(ii)  \iff  (v)$ follows from the observation that given any $ x \in Sm(\mathbb{X}), $ there exists  $ f \in Exp(B_{\mathbb{X}^*}) $  and $ x^- = \{ y \in \mathbb{X} : f(y) \leq 0 \}. $ This completes the proof.

\end{proof}

\begin{remark}
The preceding lemma admits the following immediate refinements. Under its assumptions, the set $A$ admits a minimal covering by $m$ balls if and only if there exist $x_1, \dots, x_m \in \mathbb{X} \setminus \{0\}$ such that $\bigcap_{i=1}^m x_i^{-} = \{0\}$, while for every $1 \le k < m$ and every choice of $y_1, \dots, y_k \in \mathbb{X} \setminus \{0\}$, one has $\bigcap_{i=1}^k y_i^{-} \neq \{0\}$. An equivalent formulation holds true with vectors from the unit sphere. Moreover, if $A \subset \mathbb{X} \setminus \{0\}$ is compact (without the absorbing assumption), then $A$ admits a covering by $m$ balls if and only if there exist $x_1, \dots, x_m \in S_X$ such that $A \cap \bigcap_{i=1}^m x_i^{-} = \emptyset$, with analogous formulations in terms of smooth points, positive cones, and exposed functionals as in the lemma.
\end{remark}

As an interesting applications of Lemma \ref{lemma:orthogonality}, we next obtain an upper bound for the ball covering number of the $ p $-sum of Banach spaces, for $1 \leq p < \infty$, in terms of the ball covering numbers of $\mathbb{X}$ and $\mathbb{Y}$.
To this end, we first recall the following well-known characterization of exposed points of the dual unit ball of $p$-sums of Banach spaces.

\begin{lemma}\label{p-sum}
	
	Let $\mathbb{X}$ and $\mathbb{Y}$ be Banach spaces. Then $$ Exp(B_{\mathbb{X}^* \oplus_p \mathbb{Y}^*}) = \{ (f, g): \|f\|^p + \|g\|^p = 1,  f \in Exp( B_{X^*}), g \in Exp( B_{Y^*})\}. $$
	
\end{lemma}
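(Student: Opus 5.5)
We would prove the two inclusions directly from the equality cases of the two inequalities that control the pairing. First we fix conventions. The norm on $\mathbb{X}^*\oplus_p\mathbb{Y}^*$ is $\|(f,g)\|=(\|f\|^p+\|g\|^p)^{1/p}$. We read ``$f\in Exp(B_{\mathbb{X}^*})$'' for a functional $f$ with $\|f\|<1$ as meaning that either $f=0$ or $f/\|f\|$ is an exposed point of $B_{\mathbb{X}^*}$, and likewise for $g$. We take $1<p<\infty$, since for $p=1$ the statement fails already for $\mathbb{X}=\mathbb{Y}=\mathbb{R}$. Exposing functionals are taken in the space that pairs with $\mathbb{X}^*\oplus_p\mathbb{Y}^*$ through $(x,y)\mapsto f(x)+g(y)$, with the conjugate norm $(\|x\|^q+\|y\|^q)^{1/q}$, where $\frac1p+\frac1q=1$. (This is the setting in which the lemma is applied in finite dimensions.) The key chain of inequalities is
\[
f(x)+g(y)\le \|f\|\|x\|+\|g\|\|y\|\le (\|f\|^p+\|g\|^p)^{1/p}(\|x\|^q+\|y\|^q)^{1/q}.
\]
Equality in the first step means that $f$ attains its norm at $x$ and $g$ attains its norm at $y$. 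Equality in the second step (the H\"older equality case, using $1<p<\infty$) means that $(\|f\|^p,\|g\|^p)$ is proportional to $(\|x\|^q,\|y\|^q)$.

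\textbf{Inclusion $\subseteq$.} Let $(f,g)$, with $\|f\|^p+\|g\|^p=1$, be exposed by $(x,y)$. Then equality holds throughout the chain. Suppose $f\neq0$. By the H\"older equality case, $x\neq0$ and $f(x)=\|f\|\|x\|$. If $f/\|f\|$ were not exposed by $x/\|x\|$, there would be some $f'\ne f$ with $\|f'\|=\|f\|$ and $f'(x)=\|f\|\|x\|$. Then $(f',g)$ would also lie on the unit sphere and attain the value $1$ at $(x,y)$, contradicting exposedness. The argument for $g$ is symmetric, and the case $f=0$ or $g=0$ is covered by the convention.

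\textbf{Inclusion $\supseteq$.} Given $(f,g)$ of norm one with $f/\|f\|$ exposed by $x_0\in S_{\mathbb{X}}$ and $g/\|g\|$ exposed by $y_0\in S_{\mathbb{Y}}$, we set
\[
x=\|f\|^{p-1}x_0,\qquad y=\|g\|^{p-1}y_0,
\]
with $x=0$ when $f=0$ and $y=0$ when $g=0$. A direct computation gives $f(x)+g(y)=1=\|(x,y)\|_q$. If $(f',g')$ is any unit vector with $f'(x)+g'(y)=1$, then equality holds throughout the chain. The H\"older equality case forces $\|f'\|=\|f\|$ and $\|g'\|=\|g\|$. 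Norm attainment of $f'$ at $x_0$, together with exposedness of $f/\|f\|$ by $x_0$, then gives $f'=f$; similarly $g'=g$. In the degenerate case $f=0$ we get $\|f'\|=0$, so again $f'=f$.

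The main obstacle we expect is the careful bookkeeping of the H\"older equality case: ensuring that it pins down $\|f'\|$ and $\|g'\|$ exactly, including the degenerate cases where one coordinate vanishes. This is also exactly the point where $p>1$ is needed.
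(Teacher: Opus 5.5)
Your proof is correct. The paper gives no proof of this lemma: it is recalled as ``well known'' and used without argument, so there is no competing route to compare. Your H\"older-equality argument is a complete, self-contained proof, and it works for arbitrary Banach spaces, not only in finite dimensions.

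The conventions you fix are exactly the repairs the statement needs. Read literally, with $f$ and $g$ norm-one exposed points, the right-hand side is empty, since then $\|f\|^p+\|g\|^p=2$. Your normalization ($f=0$ or $f/\|f\|$ exposed) is the intended meaning. Your $\ell_1^2$ counterexample is right, so the restriction $1<p<\infty$ is genuinely needed.

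Two small remarks.

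\textbf{Exposed versus weak$^*$-exposed.} With the paper's definition, an exposed point of $B_{\mathbb{X}^*}$ is exposed by an element of $\mathbb{X}^{**}$. You pair with $\mathbb{X}\oplus_q\mathbb{Y}$ instead, so what you prove is the weak$^*$-exposed version. Both inclusions go through verbatim with $(x,y)\in\mathbb{X}^{**}\oplus_q\mathbb{Y}^{**}=(\mathbb{X}^*\oplus_p\mathbb{Y}^*)^*$, and in finite dimensions the two notions coincide anyway.

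\textbf{The case $p=1$ in the paper's application.} Your restriction to $1<p<\infty$ shows that the lemma does not cover the case $p=1$ of the subsequent theorem on $\mathbb{X}\oplus_p\mathbb{Y}$. There the dual is an $\infty$-sum, and the fact actually needed is different. If $f,g$ are norm-one exposed points, exposed by $x_0$ and $y_0$ respectively, then $(f,g)$ is an exposed point of $B_{\mathbb{X}^*\oplus_\infty\mathbb{Y}^*}$, exposed by $\tfrac12(x_0,y_0)$. This is immediate to check directly.
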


\begin{theorem}\label{p sum}
	
	Let $ \mathbb{X} $ and $ \mathbb{Y} $ be finite-dimensional Banach spaces such that $ \mathbb{X}, \mathbb{Y} $ admits a ball covering consisting of $ m $ and $ n $ balls, respectively. Let $ 1\leq p <\infty.$ Then $S_{\mathbb{X} \oplus_p \mathbb{Y}} $ admits a ball covering consisting of $ m+n+1 $ balls. 
	
\end{theorem}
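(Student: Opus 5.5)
The plan is to apply Lemma~\ref{lemma:orthogonality} twice: once in each factor to extract cones with trivial intersection, and once in $\mathbb{X}\oplus_p\mathbb{Y}$ to turn such cones back into a ball covering. We take $\mathcal{A}=S_{\mathbb{X}\oplus_p\mathbb{Y}}$, which is compact, and every nonzero vector has a positive multiple in it.

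\textbf{Step 1 (cones in the factors).} Since $S_{\mathbb{X}}$ is covered by $m$ balls, the implication $(i)\Rightarrow(iii)$ of Lemma~\ref{lemma:orthogonality} gives $x_1,\ldots,x_m\in\mathbb{X}\setminus\{0\}$ with $\cap_{i=1}^m x_i^-=\{0\}$. In the same way we get $y_1,\ldots,y_n\in\mathbb{Y}\setminus\{0\}$ with $\cap_{j=1}^n y_j^-=\{0\}$.

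\textbf{Step 2 (candidate centers).} We take the vectors $(x_i,0)$, $1\le i\le m$, and $(0,y_j)$, $1\le j\le n$. The key step is to describe $(x_i,0)^-$ using Lemma~\ref{functional}$(i)$, namely $(u,v)\in (x_i,0)^-$ if and only if $\Phi(u,v)\le 0$ for all $\Phi\in J((x_i,0))$. The dual of $\mathbb{X}\oplus_p\mathbb{Y}$ is $\mathbb{X}^*\oplus_q\mathbb{Y}^*$, with $q$ the conjugate exponent.
\begin{itemize}
\item For $1<p<\infty$, the equality case of H\"older's inequality forces $J((x_i,0))=\{(f,0): f\in J(x_i)\}$, after normalising.
\item For $p=1$, we get $J((x_i,0))=\{(f,g): f\in J(x_i),\ \|g\|\le 1\}$.
\end{itemize}
In either case $J((x_i,0))$ contains every functional $(f,0)$ with $f\in J(x_i)$. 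Hence $(u,v)\in(x_i,0)^-$ implies $f(u)\le 0$ for all $f\in J(x_i)$, that is, $u\in x_i^-$.

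\textbf{Step 3 (trivial intersection).} Let $(u,v)$ lie in $\bigcap_i (x_i,0)^-\cap\bigcap_j (0,y_j)^-$. By Step 2, $u\in\cap_i x_i^-=\{0\}$, and symmetrically $v\in\cap_j y_j^-=\{0\}$. So the intersection of the $m+n$ cones is $\{0\}$. Now $(iii)\Rightarrow(i)$ of Lemma~\ref{lemma:orthogonality} gives a covering of $S_{\mathbb{X}\oplus_p\mathbb{Y}}$ by $m+n$ balls, none of which contains the origin.

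To reach exactly $m+n+1$ balls, as the statement asks, we add one more ball avoiding the origin, for instance $B((x_1,0),r)$ with small $r$. Alternatively, we add the center $(-x_1,-y_1)$ to the family before applying the lemma, since adding vectors keeps the intersection of the cones equal to $\{0\}$.

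\textbf{Main obstacle.} The step needing care is the computation of $J((x_i,0))$ in Step 2, especially for $p=1$, where $(x_i,0)$ is never smooth. The argument is arranged so that it only uses the inclusion of the functionals $(f,0)$, $f\in J(x_i)$, in $J((x_i,0))$. That inclusion holds for every $1\le p<\infty$, because $\|(f,0)\|_q=1$ and $(f,0)(x_i,0)=\|x_i\|$. This avoids relying on Lemma~\ref{p-sum}, whose exposed-point description excludes the functionals $(f,0)$ and would also need $\mathbb{X}^*\oplus_q\mathbb{Y}^*$ in place of $\mathbb{X}^*\oplus_p\mathbb{Y}^*$.
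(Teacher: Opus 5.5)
Your argument works for $1<p<\infty$. There $J((x_i,0))=\{(f,0): f\in J(x_i)\}$, so $(x_i,0)^-=x_i^-\times\mathbb{Y}$, and you even get $m+n$ balls, in line with Theorem~\ref{theorem:Countable Banach space}. The gap is Step~2 for $p=1$.

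The form of Lemma~\ref{functional}$(i)$ you use, ``$f(u)\le 0$ for \emph{all} $f\in J(x)$'', is valid only at smooth points $x$, which is how the paper uses it. In general $u\in x^-$ iff $f(u)\le 0$ for \emph{some} $f\in J(x)$, because the right derivative of $t\mapsto\|x-tu\|$ at $t=0$ is $-\min_{f\in J(x)}f(u)$. The universal version would contradict Theorem~\ref{J}: in $\ell_1^2$ we have $(1,0)\perp_B(0,1)$, yet $(1,1)\in J((1,0))$ takes the value $1$ at $(0,1)$.

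Since $(x_i,0)$ is not smooth when $p=1$, the large set $J((x_i,0))=J(x_i)\times B_{\mathbb{Y}^*}$ makes the cone larger, not smaller. Indeed $(x_i,0)^-=\{(u,v):\min_{f\in J(x_i)}f(u)\le\|v\|\}$, which does not force $u\in x_i^-$. The construction cannot be rescued: $\|(z,0)-(u,v)\|_1=\|z-u\|+\|v\|\ge\|z\|-\|u\|+\|v\|$. So a ball centred at $(z,0)$ with radius at most $\|z\|$ contains only points with $\|v\|<\|u\|$, and symmetrically for centres $(0,w)$. Hence no point with $\|u\|=\|v\|=\frac12$ is covered, whatever $x_i,y_j$ you choose. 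For $\mathbb{X}=\mathbb{Y}=\mathbb{R}$, the point $(\frac12,\frac12)\in S_{\ell_1^2}$ lies in all four cones.

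Your padding does not repair this. An extra ball at $(x_1,0)$ obeys the same obstruction. Any ball centred at $t(-x_1,-y_1)$ misses $\bigl(\frac{x_1}{2\|x_1\|},\frac{y_1}{2\|y_1\|}\bigr)$, whose distance from the centre is $t(\|x_1\|+\|y_1\|)+1$. This is the phenomenon the paper notes just before Remark~\ref{theorem:1-sum}: for $p=1$, single-coordinate centres do not suffice.

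The missing idea, which is what the paper does, is to use centres with \emph{both} coordinates nonzero. Take $f_i\in Exp(B_{\mathbb{X}^*})$ and $g_j\in Exp(B_{\mathbb{Y}^*})$ from Lemma~\ref{lemma:orthogonality}$(v)$. Up to normalisation, the functionals $(f_1,g_1)$, $(f_i,-g_1)$, $(-f_1,g_j)$ are the unique supporting functionals of points $(x,y)$ with $x,y$ smooth and nonzero. Such points are smooth in $\mathbb{X}\oplus_p\mathbb{Y}$ for every $1\le p<\infty$, including $p=1$. So their cones really are the half-spaces $\{(u,v): f(u)+g(v)\le 0\}$. The argument then runs as follows:
\begin{itemize}
\item the pair $(f_1,\pm g_1)$ gives $f_1(u)\le 0$;
\item the functionals $(-f_1,g_j)$ give $g_j(v)\le f_1(u)\le 0$, hence $v=0$;
\item the functionals $(f_i,-g_1)$ then give $f_i(u)\le 0$, hence $u=0$.
\end{itemize}
The pair $(f_1,\pm g_1)$ is exactly where the extra ball is spent. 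As written, your proof is complete only for $1<p<\infty$; the case $p=1$ needs a mixed-centre construction of this kind.
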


\begin{proof}
	
  Since   $ \mathbb{X}, \mathbb{Y} $ admits a ball covering consisting of $ m $ and $ n $ balls, respectively, it follows from  Lemma \ref*{lemma:orthogonality} that there exist $ f_1,f_2,\ldots,f_m  \in  Exp( B_{X^*}) $ and  $ g_1,g_2\ldots,g_n \in Exp( B_{Y^*}) $ such that 
  \begin{eqnarray*}
      \bigcap_{i=1}^m \{ x \in \mathbb{X} : f_i(x) \leq 0 \} = \{0\} \quad \text{and} \quad 
      \bigcap_{i=1}^n \{ y \in \mathbb{Y} : g_i(y) \leq 0 \} = \{0\}.
  \end{eqnarray*}

By Lemma~\ref{p-sum}, the set of functionals $
\{(f_1,g_1), (f_i,-g_1),(-f_1,g_j) : 1 \leq i \leq m, 1 \leq j \leq n\} \subset  
Exp(B_{\mathbb{X}^*\oplus_q \mathbb{Y}^*}). $ We claim that
\[
\bigcap_{i=1}^m (f_i,-g_1)^-
\ \cap\
(f_1,g_1)^-
\ \cap\
\bigcap_{j=1}^n (-f_1,g_j)^-
=
\{0\}.
\]
Let $(x,y)\in \mathbb{X}\oplus_p \mathbb{Y}$ belong to the above intersection. Then
\[
f_1(x)+g_1(y)\le 0
\quad\text{and}\quad
f_1(x)-g_1(y)\le 0,
\]
which together imply $f_1(x)\le 0$. Moreover, for each $1\le j\le n$,
\[
-f_1(x)+g_j(y)\le 0,
\]
and hence each $g_j(y)\le 0$. Consequently,
\[
y\in \bigcap_{j=1}^n \{y\in\mathbb{Y}: g_j(y)\leq 0\} \implies y =0.
\]
Also, since  $(x,y)\in (f_i,-g_1)^-$ for each 
$1\leq i\leq m$, we obtain each  $f_i(x)\leq 0$ and hence
\[
x\in \bigcap_{i=1}^m \{x\in\mathbb{X}: f_i(x)\le 0\} \implies x=0.
\]
This completes the proof of the theorem.
\end{proof}

In \cite{C}, Cheng showed that whenever an $ n $-dimensional Banach space $ \mathbb{X} $ is smooth, it follows that $\mathcal{B}_{\min}^{\#} = n+1. $ Moreover, the smoothness assumption is not necessary for an $ n $-dimensional Banach space to have such a covering by $n+1$ balls. We next present a sufficient condition for an $ n $-dimensional Banach space to have the ball covering number equal to $ n+1. $ As an application of this, we will show that the space $\ell_{1}^n$ and some operator spaces have the minimal   ball covering number equal to $ n+1. $ It follows from \cite{C} that this number is the minimal possible  for an $ n $-dimensional Banach space. For any $ x_1,x_2,\ldots,x_k \in \mathbb{X}, $ we define $$ cone(x_1, x_2,\ldots, x_k):= \bigg\{ \sum_{i=1}^{k} \alpha_i x_i: \alpha > 0,  \quad   1 \leq i \leq k \bigg\}. $$

\begin{theorem}\label{theorem:sufficient condition}	
	Let $ \mathbb{X} $ be an $ n $-dimensional Banach space and let $ \mathcal{A}\subset \mathbb{X} \setminus \{0\} $ be a compact set. Suppose that there exist $ f_1,f_2,\ldots,f_n, f_{n+1} \in Exp( B_{\mathbb{X}^*}) $ such that   the following holds:
	 
	\begin{itemize}
		\item[$$(i)$$] $span \{f_1, f_2, \ldots, f_n\}= \mathbb{X}^*,  $
		
		\item[$(ii)$] $ f_{n+1} \in cone(-f_1,-f_2,\ldots,-f_n). $ 
			\end{itemize}
		Then $ \mathcal{A} $ admits a ball covering consisting of $ n+1 $ balls.
	
\end{theorem}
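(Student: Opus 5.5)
The plan is to reduce the statement to the dual-cone criterion $(v)$ of Lemma~\ref{lemma:orthogonality}. That criterion is stated for compact sets that meet every ray from the origin, and our $\mathcal{A}$ need not do so. I will therefore first enlarge it to
$$\mathcal{A}' := \mathcal{A} \cup S_{\mathbb{X}}.$$
The set $\mathcal{A}'$ is compact, because $\mathbb{X}$ is finite-dimensional and $S_{\mathbb{X}}$ is compact. It lies in $\mathbb{X}\setminus\{0\}$. For every nonzero $x$ we have $x/\|x\| \in \mathcal{A}'$, so $\mathcal{A}'$ satisfies the hypotheses of Lemma~\ref{lemma:orthogonality}. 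Any covering of $\mathcal{A}'$ by $n+1$ balls avoiding the origin is in particular such a covering of $\mathcal{A}$. Alternatively, the final remark after Lemma~\ref{lemma:orthogonality} can be applied directly to $\mathcal{A}$, since $0\notin\mathcal{A}$.

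The heart of the argument is to verify
$$\bigcap_{i=1}^{n+1}\{y\in\mathbb{X}: f_i(y)\le 0\}=\{0\}.$$
By hypothesis $(ii)$, write $f_{n+1} = -\sum_{i=1}^n \alpha_i f_i$ with every $\alpha_i>0$. Suppose $y$ satisfies $f_i(y)\le 0$ for $1\le i\le n+1$.
\begin{itemize}
\item Since each $\alpha_i>0$ and each $f_i(y)\le 0$ for $i\le n$, we get $f_{n+1}(y) = -\sum_{i=1}^n \alpha_i f_i(y) \ge 0$.
\item Combined with $f_{n+1}(y)\le 0$, this forces $\sum_{i=1}^n \alpha_i f_i(y)=0$.
\item This is a sum of nonpositive terms with strictly positive coefficients, so $f_i(y)=0$ for every $1\le i\le n$.
\item By hypothesis $(i)$, the functionals $f_1,\dots,f_n$ span $\mathbb{X}^*$. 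Hence every functional vanishes at $y$, and Hahn--Banach gives $y=0$.
\end{itemize}

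Since the $f_i$ are exposed points of $B_{\mathbb{X}^*}$, lying in $S_{\mathbb{X}^*}$, condition $(v)$ of Lemma~\ref{lemma:orthogonality} holds with $m=n+1$. The implication $(v)\Rightarrow(i)$ then yields a covering of $\mathcal{A}'$, and hence of $\mathcal{A}$, by $n+1$ balls not containing the origin.

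The step needing the most care is this passage $(v)\Rightarrow(ii)\Rightarrow(i)$. It relies on the correspondence, stated in the proof of that lemma, between an exposed functional $f$ and a smooth point $x$ with $J(x)=\{f\}$, so that $x^-=\{y: f(y)\le 0\}$ by Lemma~\ref{functional}$(i)$. I would check that this correspondence is used exactly as stated there: in finite dimensions every exposed point of $B_{\mathbb{X}^*}$ is exposed by some $x\in S_{\mathbb{X}}$, and such an $x$ is smooth with $J(x)=\{f\}$. With that in hand, $(iii)\Rightarrow(i)$ of the lemma finishes the argument via compactness and Proposition~\ref{prop:compact}.
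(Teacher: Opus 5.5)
Your proposal is correct and essentially the paper's argument. In both, the whole content is the cone computation: $f_i(x)\le 0$ for $i\le n$ forces $f_{n+1}(x)\ge 0$, with equality only if $x\in\bigcap_{i=1}^n\ker f_i=\{0\}$. The only difference is the reduction step: the paper invokes \cite[Th. 2.5]{Sain25} to reduce directly to $\max_{1\le i\le n+1} f_i(x)>0$ for $x\in\mathcal{A}$, whereas you pass through criterion $(v)$ of Lemma~\ref{lemma:orthogonality} after enlarging $\mathcal{A}$ to $\mathcal{A}\cup S_{\mathbb{X}}$. Along the way you correctly supply the finite-dimensional justification of $(v)\Rightarrow(ii)$ (each exposed $f$ equals $J(x)$ for some smooth $x$), which the paper's proof of that lemma leaves implicit.
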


\begin{proof}
	
 Following \cite[Th, 2.5]{Sain25}, it is sufficient to show that for any  $ x\in \mathcal{A} , $  
	$$ \max\{f_i(x): 1\leq i \leq n+1 \}>0.$$
Let $x \in \mathcal{A} $.  If $ f_i(x) >0 $ for some $ 1\leq i\leq n, $ then we are done. Suppose that $f_i(x) \leq 0, $ for any $1 \leq i \leq n.$ As $f_{n+1} \in cone( -f_1,-f_2,\ldots,-f_n),$ we have $f_{n+1}(x) \geq 0.$ Observe that $f_{n+1}(x) =0 \implies f_i(x)= 0 $ for every $1 \leq i \leq n.$ 	Since $span \{f_1, f_2, \ldots, f_n\}= \mathbb{X}^*  $, it follows that $ \cap_{i=1}^n ker f_i = \{0\}. $ Therefore, $f_{n+1}(x) > 0, $  completing the proof.

\end{proof}

  As an immediate consequence of the above result, we have the following observation on the minimal ball coverings of smooth Banach spaces. 

\begin{cor}\label{smooth space}
	 Let $ \mathbb{X} $ be an $ n $-dimensional smooth Banach space and let $ \mathcal{A} \subset \mathbb{X} \setminus \{0\} $ be a compact set. Then $ \mathcal{A} $ admits a ball covering consisting of $ n+1 $ balls.\\
	In particular, $  S_\mathbb{X} $ admits a ball covering consisting of $ n+1 $ balls.
	
\end{cor}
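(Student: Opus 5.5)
We derive the corollary from Theorem~\ref{theorem:sufficient condition}: it suffices to produce exposed points $f_1,\ldots,f_n,f_{n+1}$ of $B_{\mathbb{X}^*}$ such that $f_1,\ldots,f_n$ span $\mathbb{X}^*$ and $f_{n+1}\in cone(-f_1,\ldots,-f_n)$. The approach is to use smoothness of $\mathbb{X}$ to show that \emph{every} point of $S_{\mathbb{X}^*}$ is an exposed point of $B_{\mathbb{X}^*}$. Then any suitably normalized choice of functionals works.

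\textbf{Step 1: every norm-one functional is exposed.} Let $f\in S_{\mathbb{X}^*}$. Since $\mathbb{X}$ is finite-dimensional, $f$ attains its norm at some $x\in S_{\mathbb{X}}$, so $f\in J(x)$. Because $\mathbb{X}$ is smooth, $J(x)=\{f\}$. Regard $\psi(x)\in \mathbb{X}^{**}$ as a functional on $\mathbb{X}^*$. If $g\in B_{\mathbb{X}^*}$ satisfies $g(x)=1$, then $g\in J(x)$, hence $g=f$. Thus $\psi(x)$ exposes $f$ in $B_{\mathbb{X}^*}$, i.e.\ $f\in Exp(B_{\mathbb{X}^*})$.

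\textbf{Step 2: choose the functionals.} Take any basis $g_1,\ldots,g_n$ of $\mathbb{X}^*$ and set $f_i=g_i/\|g_i\|$ for $1\le i\le n$. Then condition $(i)$ of Theorem~\ref{theorem:sufficient condition} holds. Put $g=-\sum_{i=1}^n f_i$. This is nonzero by linear independence, and we set $f_{n+1}=g/\|g\|$. This lies in $cone(-f_1,\ldots,-f_n)$ with all coefficients equal to $1/\|g\|>0$, so condition $(ii)$ holds. By Step~1, all of $f_1,\ldots,f_{n+1}$ are exposed points of $B_{\mathbb{X}^*}$.

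\textbf{Step 3: conclude.} Theorem~\ref{theorem:sufficient condition} now shows that any compact $\mathcal{A}\subset\mathbb{X}\setminus\{0\}$ admits a covering by $n+1$ balls. Taking $\mathcal{A}=S_{\mathbb{X}}$, which is compact in finite dimensions and avoids $0$, gives the particular case. Combined with Cheng's lower bound $n+1\le\mathcal{B}^{\#}_{\min}$, this recovers $\mathcal{B}^{\#}_{\min}=n+1$.

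The only non-routine step is Step~1, the duality fact that smoothness of $\mathbb{X}$ makes every point of $S_{\mathbb{X}^*}$ exposed. This relies on norm attainment, which is automatic in finite dimensions, and on the uniqueness of the supporting functional. Everything else is bookkeeping against the hypotheses of Theorem~\ref{theorem:sufficient condition}.
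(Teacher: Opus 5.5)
Your proposal is correct and follows the paper's proof. Both pick $n$ linearly independent norm-one functionals, take $f_{n+1}$ to be the normalized negative of their sum, and apply Theorem~\ref{theorem:sufficient condition}. The only difference is in showing that these functionals are exposed: you verify directly, via norm attainment and $J(x)=\{f\}$, that every point of $S_{\mathbb{X}^*}$ is exposed, while the paper cites the strict convexity of $\mathbb{X}^*$ for the same fact. Your explicit normalization $f_i=g_i/\|g_i\|$ also fixes the paper's loose ``$f_i\in B_{\mathbb{X}^*}$'', since exposed points must lie on $S_{\mathbb{X}^*}$.
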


\begin{proof}
Let  $f_1, f_2, \ldots, f_n \in B_{\mathbb{X}^*}$ be linearly independent and let $ f_{n+1} = - \frac{1}{\| \sum_{i=1}^{n} f_i \| } \sum_{i=1}^{n} f_i. $	As $ \mathbb{X} $ is finite-dimensional and smooth, it is clear that $ \mathbb{X}^* $ is strictly convex. So, $f_i \in Exp(B_{\mathbb{X}^*}),$ for any $1 \leq i \leq n+1.$ Therefore, using Theorem \ref{theorem:sufficient condition}, $ \mathcal{A} $ admits a ball covering consisting of $ n+1 $ balls.\\
The remaining part follows trivially from the above observation. 
\end{proof}

  As a consequence of Theorem \ref{theorem:sufficient condition}, we obtain  the minimal ball covering number of any compact set $ \mathcal{A} $ in $\ell_1^n,$ that does not contain the origin.  Clearly, this is a generalization of the earlier existing result \cite[Page 1669]{CLZZ} for $ S_{\ell_1^n}.$ 

\begin{cor}\label{1 norm}
	 
	  Let $ \mathcal{A} \subset \ell_1^n \setminus \{0\} $ be  a compact set, where $n \geq 3.$ Then $ \mathcal{A} $ admits a ball covering consisting of $ n+1 $ balls.  
	
	  In particular, $ S_{\ell_1^n} $ admits a ball covering consisting of $ n+1 $ balls, for any $ n\geq 3$. 
	 
\end{cor}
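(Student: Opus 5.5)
The plan is to apply Theorem~\ref{theorem:sufficient condition} directly with $\mathbb{X}=\ell_1^n$. Since $(\ell_1^n)^*=\ell_\infty^n$, the candidate exposed points of $B_{\mathbb{X}^*}$ are the sign vectors $(\pm1,\ldots,\pm1)$. Each such vector $\varepsilon$ is exposed: the element $\frac1n\varepsilon\in S_{\ell_1^n}$, viewed as a functional on $\ell_\infty^n$, attains the value $1$ on $B_{\ell_\infty^n}$ only at $\varepsilon$. The whole task therefore reduces to a finite combinatorial choice of $n+1$ sign vectors satisfying conditions $(i)$ and $(ii)$ of Theorem~\ref{theorem:sufficient condition}.

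For $1\le i\le n$, let $\mathbf{1}=(1,\ldots,1)$ and set
\[
f_i:=\mathbf{1}-2e_i ,
\]
the vector with all entries $1$ except $-1$ in the $i$-th place. Also set $f_{n+1}:=-\mathbf{1}$. All of these are sign vectors, hence in $Exp(B_{\mathbb{X}^*})$.

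\emph{Condition $(i)$.} The matrix with rows $f_1,\ldots,f_n$ is $J-2I$, where $J$ is the all-ones matrix. Its eigenvalues are $n-2$ (with eigenvector $\mathbf{1}$) and $-2$ (with multiplicity $n-1$). Since $n\geq 3$, all eigenvalues are nonzero, so $f_1,\ldots,f_n$ are linearly independent and span $\mathbb{X}^*$.

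\emph{Condition $(ii)$.} Summing gives $\sum_{i=1}^n f_i=n\mathbf{1}-2\mathbf{1}=(n-2)\mathbf{1}$. Hence
\[
f_{n+1}=-\mathbf{1}=\sum_{i=1}^n \tfrac{1}{n-2}(-f_i),
\]
and every coefficient $\frac1{n-2}$ is strictly positive because $n\ge 3$. Thus $f_{n+1}\in cone(-f_1,\ldots,-f_n)$.

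Theorem~\ref{theorem:sufficient condition} now gives a covering of the compact set $\mathcal{A}$ by $n+1$ balls. For the particular case, take $\mathcal{A}=S_{\ell_1^n}$, which is compact and avoids the origin. By Cheng's lower bound $n+1\le \mathcal{B}^{\#}_{\min}$ from \cite{C}, the number $n+1$ is in fact minimal.

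The only delicate point is choosing the sign vectors so that $(i)$ and $(ii)$ hold simultaneously. The computation above also shows where $n\ge 3$ is used: for $n=2$ the coefficient $\frac1{n-2}$ is undefined and $J-2I$ is singular, so this construction breaks down.
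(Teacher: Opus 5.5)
Your proof is correct and follows the paper's route: apply Theorem~\ref{theorem:sufficient condition} to $n+1$ sign vectors of $\ell_\infty^n$ with $f_{n+1}=\frac{1}{n-2}\sum_{i=1}^n(-f_i)$. Your vectors are the negatives of the paper's choice ($-\mathbf{1}+2e_i$ and $\mathbf{1}$). Your eigenvalue computation showing $J-2I$ is nonsingular for $n\ge 3$ makes explicit the spanning condition $(i)$, which the paper leaves implicit.
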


\begin{proof}
	 Let $ \psi: (\ell_1^n)^* \longrightarrow \ell_{\infty}^n $ be the canonical isometric isomorphism. Consider  $ f_1,f_2,\ldots,f_n \in \mathbb{X}^*$ such that  
	 $$\psi(f_i)= (-1,-1,\dots, \underset{i-th~ position}{1}, \dots, -1 ), \quad \forall 1 \leq i \leq n.$$
	  Let $f_{n+1} \in \mathbb{X}^*$ such that $\psi( f_{n+1})= (1,1,\ldots,1).$ Clearly, $$f_{n+1} = \frac{1}{n-2} \sum_{i=1}^{n} (-f_i)  . $$ So, $ f_{n+1}  \in B_{(\ell_{1}^n)^*} $ and $ f_{n+1} \in cone(-f_1,-f_2,\ldots,-f_n). $ As each $f_i\in Exp(B_{(\ell_{1}^n)^*}),$ following Theorem \ref{theorem:sufficient condition}, we obtain the desired result.
	    \end{proof}

Next, we study the ball covering number in the space of all bounded linear operators on finite-dimensional Banach spaces. For that purpose, we need the following lemma which is analogous to Lemma \ref{lemma:orthogonality} for operator spaces.

\begin{lemma}\label{lemma:operator1}
	Let $\mathbb{X}, \mathbb{Y}$ be finite-dimensional Banach spaces. Then the following are equivalent:
	\begin{itemize}
		\item[$(i)$] $S_{\mathbb{L}(\mathbb{X}, \mathbb{Y})}$ admits a ball covering consisting of $ m $ balls,
		
		\item[$(ii)$] There exists $x_i \in Exp(B_{\mathbb{X}}), \,  y_i^* \in Exp(B_{\mathbb{Y}^*})  $ such that the system of $m$ number of inequalities $$ y_i^*(Ax_i) \leq    0, \quad 1 \leq i \leq  m$$ has the unique solution $ A=0.$
		
	\end{itemize}
\end{lemma}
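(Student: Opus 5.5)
We apply Lemma \ref{lemma:orthogonality} (with $\mathcal{A}=S_{\mathbb{L}(\mathbb{X},\mathbb{Y})}$, which is compact and absorbing since the spaces are finite-dimensional) to the Banach space $\mathbb{L}(\mathbb{X},\mathbb{Y})$. By the equivalence $(i)\iff(v)$ of that lemma, $S_{\mathbb{L}(\mathbb{X},\mathbb{Y})}$ admits a covering by $m$ balls if and only if there are exposed points $\Phi_1,\ldots,\Phi_m$ of $B_{\mathbb{L}(\mathbb{X},\mathbb{Y})^*}$ such that $\Phi_i(A)\le 0$ for all $i$ forces $A=0$. So the whole task reduces to identifying the exposed points of the dual ball of the operator space with the functionals $A\mapsto y^*(Ax)$, where $x\in Exp(B_{\mathbb{X}})$ and $y^*\in Exp(B_{\mathbb{Y}^*})$.

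For $(ii)\implies(i)$ it is cleaner to bypass exposedness of the functionals. Given $x_i,y_i^*$ as in $(ii)$, set $\Phi_i(A)=y_i^*(Ax_i)$ and $U_i:=\{A\in S_{\mathbb{L}(\mathbb{X},\mathbb{Y})}:\Phi_i(A)>0\}$. Any unit $A$ lies in some $U_i$, since otherwise $A$ would solve the system and hence $A=0$. Next take the rank-one operator $T_i=y_i\otimes f_i$, i.e.\ $T_i(x)=f_i(x)y_i$, with $f_i\in J(x_i)$ and $y_i\in S_{\mathbb{Y}}$ satisfying $y_i^*(y_i)=1$. Then $\Phi_i\in J(T_i)$. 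If $\Phi_i(A)>0$, we want $A\notin T_i^-$, i.e.\ $\|T_i-\lambda A\|<1$ for small $\lambda>0$. Since the right derivative of the norm is $\max_{\Phi\in J(T_i)}(-\Phi(A))$, this requires $J(T_i)=\{\Phi_i\}$, so $T_i$ must be chosen to be smooth. Exposedness provides exactly this. Pick $f_i$ exposing $x_i$, meaning $x_i$ is the unique point where $f_i$ attains its norm; since in finite dimensions exposed points of $B_{\mathbb{X}}$ are exposed by some functional, this choice is possible. Similarly pick $y_i$ with $y_i^*$ the unique supporting functional of $y_i$, which is the dual meaning of $y_i^*$ being exposed: $y_i^*$ is exposed by some $y_i\in S_{\mathbb{Y}}$ (reflexivity), so $J(y_i)=\{y_i^*\}$. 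Then $M_{T_i}=\{\pm x_i\}$ and $T_ix_i=y_i$ is smooth. Using the standard description $J(T)=\overline{\mathrm{conv}}\{y^*\otimes x: x\in M_T,\,y^*\in J(Tx)\}$ in finite dimensions (equivalently Lemma \ref{BSP2}, since $T_i\in ASE$), we get $J(T_i)=\{\Phi_i\}$. Hence $T_i^-=\{A:\Phi_i(A)\le0\}$, so $\bigcap T_i^-=\{0\}$, and Lemma \ref{lemma:orthogonality} $(iii)\implies(i)$ gives $m$ balls.

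For $(i)\implies(ii)$, Lemma \ref{lemma:orthogonality} $(ii)$ provides smooth operators $T_1,\ldots,T_m$ with $\bigcap T_i^-=\{0\}$, where $T_i^-=\{A:\Phi_i(A)\le0\}$ and $J(T_i)=\{\Phi_i\}$. We need $\Phi_i$ to be of the form $y_i^*\otimes x_i$ with both factors exposed. Since $J(T_i)$ contains all $y^*\otimes x$ with $x\in M_{T_i}$ and $y^*\in J(T_ix)$, smoothness forces $M_{T_i}=\{\pm x_i\}$ and $J(T_ix_i)=\{y_i^*\}$, so $\Phi_i=y_i^*\otimes x_i$. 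It remains to make $x_i$ and $y_i^*$ exposed, and this is the main obstacle. Here $y_i^*$ is exposed in $B_{\mathbb{Y}^*}$ because it is the unique support functional at $T_ix_i$, hence exposed by $T_ix_i$. For $x_i$, note that $\Phi_i$ is an exposed point of the dual ball (exposed by $T_i$), and so an extreme point. In finite dimensions the extreme points of $B_{\mathbb{L}(\mathbb{X},\mathbb{Y})^*}$ are the $y^*\otimes x$ with both factors extreme, so $x_i$ is at least extreme. Upgrading to exposed is the delicate part. I would first try to perturb the $T_i$ slightly, using that the condition $\bigcap T_i^-=\{0\}$ is open under small perturbations of the functionals by compactness of the sphere. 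Exposed points are dense in the extreme points (Straszewicz), and smooth points are dense, so we can replace $x_i$ by a nearby exposed point while keeping the system's only solution $A=0$.
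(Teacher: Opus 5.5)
Your proposal is correct and follows the paper's proof in both directions. You use Lemma \ref{lemma:orthogonality} to pass to smooth operators, take rank-one $T_i(x)=f_i(x)y_i$ with $f_i$ exposing $x_i$ and $J(y_i)=\{y_i^*\}$, identify $J(T_i)=\{y_i^*\otimes x_i\}$ from $M_{T_i}=\{\pm x_i\}$ and smoothness of $T_ix_i$, and read off $T_i^-=\{A: y_i^*(Ax_i)\le 0\}$ via Lemma \ref{functional}.

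The one place you diverge is the exposedness of $x_i$ in $(i)\implies(ii)$, which the paper never checks explicitly (it only records $x_i\in S_{\mathbb{X}}$). Your perturbation argument does close this. Set $\delta=\min_{A\in S_{\mathbb{L}(\mathbb{X},\mathbb{Y})}}\max_i y_i^*(Ax_i)>0$ and note that $|y_i^*(Ax_i)-y_i^*(Ax_i')|\le\|x_i-x_i'\|$ for unit $A$. Then any exposed $x_i'$ within $\delta$ of the extreme point $x_i$, which Straszewicz provides, preserves the uniqueness of the solution $A=0$. Extremality of $x_i$ also follows directly from extremality of $y_i^*\otimes x_i$, so you do not need the full description of extreme points of the dual ball.

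Still, this is much heavier than necessary. Normalize $\|T_i\|=1$ and put $g_i=y_i^*\circ T_i\in S_{\mathbb{X}^*}$. If $x\in B_{\mathbb{X}}$ satisfies $g_i(x)=1$, then $1=y_i^*(T_ix)\le\|T_ix\|\le\|x\|\le 1$, so $x\in M_{T_i}=\{\pm x_i\}$, and hence $x=x_i$. So $x_i$ is exposed by $g_i$ outright, with no change to the $T_i$ needed. Alternatively, Lemma \ref{exposed:dual} gives the same conclusion, since $y_i^*\otimes x_i$ is exposed by $T_i$. Your appeal to density of smooth points plays no role.
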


\begin{proof}
	$(i) \implies (ii):$ As  $S_{\mathbb{L}(\mathbb{X}, \mathbb{Y})}$ has a covering by $m$ balls, following Lemma \ref{lemma:orthogonality} there exists  $T_i \in Sm(\mathbb{L}(\mathbb{X}, \mathbb{Y})), 1 \leq i \leq m$ such that $\cap_{i =1}^m T_i^- =0.$ As each $T_i$ is smooth, following \cite[Th. 3.1]{SPMR} we get that  $M_{T_i}=\{ \pm x_i\}, $ for some $x_i \in S_{X}$ and each $Tx_i\in S_Y$ is smooth. Let $J(Tx_i)=\{ y_i^*\},$ for any $ 1\leq i \leq m.$ Clearly, each $y_i^* \in Exp(B_{Y^*}).$ It is easy to observe that $J(T_i)= \{ \pm [y_i^* \otimes x_i]\},$ where $[y_i^* \otimes x_i](A)= y_i^*(Ax_i)$  for all $A \in \mathbb{L}(\mathbb{X}, \mathbb{Y}).$
Therefore, following Lemma  \ref{functional}, 
	\[
	T_i^-= \{ A \in \mathbb{L}(\mathbb{X}, \mathbb{Y}): [y_i^* \otimes x_i](A) \leq 0\} =  \{ A \in \mathbb{L}(\mathbb{X}, \mathbb{Y}): y_i^*(Ax_i) \leq 0\}.
	\] 
	Since $\cap_{i =1}^m T_i^- = \{0\},$ we obtain the required result.
	
	$(ii)  \implies (i):$ As $x_i \in Exp(B_{\mathbb{X}}), y_i^* \in Exp(B_{\mathbb{Y}^*}),$ there exist $x_i^* \in S_{\mathbb{X}^*}$ and $y_i \in S_{\mathbb{Y}}$ such that $M_{x_i^*}= \{ \pm x_i\}$ and $J(y_i)=\{y_i^*\}.$ For any $1 \leq i \leq m,$ define 
    $$T_i(x) = x_i^*(x) y_i,\quad \text{for any} \, x \in \mathbb{X}.$$
    It is easy to show that each $T_i$ is smooth and $J(T_i)= \{ y_i^* \otimes x_i\}.$ Again, applying Lemma \ref{functional}, we have that $\cap_{i=1}^{m}T_i^-= \{ A \in \mathbb{L}(\mathbb{X}, \mathbb{Y}): y_i^*(Ax_i) \leq 0\}=\{0\}.$ Following Lemma \ref{lemma:orthogonality}, $S_{\mathbb{L}(\mathbb{X}, \mathbb{Y})}$ admits a ball covering consisting of $ m $-balls.
\end{proof}

\begin{prop}
	Let $\mathbb{X}$ be an $m$-dimensional Banach space and let $\mathbb{Y}$ be an $n$-dimensional Banach space. Suppose that $S_{\mathbb{X}^*}$ and $S_{\mathbb{Y}}$ admit a ball covering consisting of $p$ and $q$ number of balls, respectively. Then $S_{\mathbb{L}(\mathbb{X}, \mathbb{Y})}$ admits a ball covering consisting of $k$ balls, where $k = \min \{ mq, np\}.$
\end{prop}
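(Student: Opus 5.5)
The plan is to use Lemma \ref{lemma:operator1} together with Lemma \ref{lemma:orthogonality}, and to prove the two bounds $mq$ and $np$ separately. By symmetry of the construction, it suffices to exhibit a covering with $mq$ balls and then one with $np$ balls.

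\textbf{The bound $mq$.} Since $S_{\mathbb{Y}}$ admits a covering by $q$ balls, Lemma \ref{lemma:orthogonality}$(v)$ yields exposed points $g_1,\ldots,g_q \in Exp(B_{\mathbb{Y}^*})$ with $\bigcap_{j=1}^q\{y : g_j(y)\le 0\}=\{0\}$. Next I choose $m$ exposed points $x_1,\ldots,x_m \in Exp(B_{\mathbb{X}})$ that span $\mathbb{X}$. This is possible because $B_{\mathbb{X}}$ is the closed convex hull of its exposed points (Straszewicz), so the exposed points cannot all lie in a proper subspace. I then consider the $mq$ inequalities
\[
g_j(Ax_i)\le 0, \qquad 1\le i\le m,\ 1\le j\le q.
\]
For fixed $i$, these force $Ax_i \in \bigcap_j\{g_j\le 0\}=\{0\}$, so $Ax_i=0$ for all $i$. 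Since the $x_i$ span $\mathbb{X}$, this gives $A=0$. Lemma \ref{lemma:operator1} $(ii)\Rightarrow(i)$ then produces $mq$ balls.

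\textbf{The bound $np$.} This is dual. Since $S_{\mathbb{X}^*}$ is covered by $p$ balls, Lemma \ref{lemma:orthogonality}$(v)$ applied in $\mathbb{X}^*$ gives $\phi_1,\ldots,\phi_p\in Exp(B_{\mathbb{X}^{**}})$. Via reflexivity these are $x_1,\ldots,x_p\in Exp(B_{\mathbb{X}})$ with $\bigcap_k\{x^*\in\mathbb{X}^* : x^*(x_k)\le 0\}=\{0\}$. I also choose $y_1^*,\ldots,y_n^*\in Exp(B_{\mathbb{Y}^*})$ spanning $\mathbb{Y}^*$, again by Straszewicz in $\mathbb{Y}^*$. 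Now consider the $np$ inequalities $y_l^*(Ax_k)\le 0$, equivalently $(A^*y_l^*)(x_k)\le 0$. For fixed $l$, these give $A^*y_l^*\in\bigcap_k\{x^* : x^*(x_k)\le0\}=\{0\}$. Spanning then gives $A^*=0$, hence $A=0$, and Lemma \ref{lemma:operator1} applies again.

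\textbf{Expected obstacle.} The main thing to verify carefully is the availability of enough exposed points, namely spanning sets of exposed points in $B_{\mathbb{X}}$ and $B_{\mathbb{Y}^*}$, which is where Straszewicz's theorem enters. A second point needing care is that Lemma \ref{lemma:orthogonality}$(v)$ is stated for the unit sphere as the set $\mathcal{A}$. This is legitimate because $S_{\mathbb{X}}$ is compact and absorbing in finite dimensions, and the identification $\mathbb{X}^{**}=\mathbb{X}$ carries exposed points to exposed points. Taking the smaller of the two constructions gives $k=\min\{mq,np\}$.
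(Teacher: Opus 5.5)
Your proposal is correct and follows essentially the paper's argument. The $mq$ bound is obtained exactly as in the paper, and your direct adjoint argument for the $np$ bound is the unwound form of the paper's appeal to the isometry $\mathbb{L}(\mathbb{X},\mathbb{Y})\cong\mathbb{L}(\mathbb{Y}^*,\mathbb{X}^*)$ followed by repeating the first construction. Your Straszewicz justification for the existence of spanning families of exposed points fills in a step the paper simply asserts.
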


\begin{proof}
		Let $\{ x_1, x_2, \ldots, x_m\}$ be a basis of $\mathbb{X},$ where $x_i \in Exp(B_{\mathbb{X}})$. As $S_{\mathbb{Y}}$ has a covering by $q$ number of balls, it follows from [(v), Lemma \ref{lemma:orthogonality}] that there exist $y_1^*, y_2^*, \ldots, y_q^* \in Exp(B_{\mathbb{Y}^*})$ such that $$ \cap_{i =1}^q \{ y \in \mathbb{Y}: y_i^*(y) \leq 0\}= \{0\}.$$  Suppose that for each $1 \leq i \leq q,$ $J(y_i)= \{y_i^*\}.$ Consider the system of $mq$ number of inequalities $ y_j^* (Ax_i) \leq 0,$ where $A \in \mathbb{L}(\mathbb{X}, \mathbb{Y})$ and $1 \leq i \leq m, 1 \leq j \leq q.$  As $ \cap_{i =1}^q \{ y \in \mathbb{Y}: y_i^*(y) \leq 0\}= \{0\},$ $Ax_i=0$ for any $ 1 \leq i \leq m.$ It implies that $A=0.$ Applying Lemma \ref{lemma:operator1},  $S_{\mathbb{L}(\mathbb{X}, \mathbb{Y})}$ admits a covering by $mq$ number of balls.
		
		Recall that $\mathbb{L}(\mathbb{X}, \mathbb{Y})$ is isometrically isometric to $\mathbb{L}(\mathbb{Y}^*, \mathbb{X}^*)$. Proceeding as above we can show that  $\mathbb{L}(\mathbb{Y}^*, \mathbb{X}^*)$ can be covered by $np$ number of balls. Therefore, $S_{\mathbb{L}(\mathbb{X}, \mathbb{Y})}$ can be covered by $k$ number of balls,  where $k = \min \{ mq, np\}.$
\end{proof}

Next, we compute the ball covering number of the space $\mathbb{L}(\mathbb{X}, \mathbb{Y})$ under the assumption that  $\mathbb{X}$ is a finite-dimensional strictly convex and $\mathbb{Y}$ is any finite-dimensional smooth. To do so we need the following lemma.

	\begin{lemma}\cite[Cor. 2.9]{RS}\label{exposed:dual}
	Let $\mathbb{X}, \mathbb{Y}$ be finite-dimensional Banach space. Then 
		\[
	Exp(\mathbb{L}(\mathbb{X}, \mathbb{Y})^*)= \bigg\{ y^* \otimes x: x \in Exp(B_{\mathbb{X}}), y^* \in Exp(B_{\mathbb{Y}^*}) \bigg\},
	\]
	where $[  y^* \otimes x](T)= y^*(Tx).$
\end{lemma}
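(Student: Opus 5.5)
The plan is to identify $B_{\mathbb{L}(\mathbb{X}, \mathbb{Y})^*}$ with the convex hull of the elementary functionals and then work with faces. Put
\[
K:=\{y^*\otimes x : x\in B_{\mathbb{X}},\ y^*\in B_{\mathbb{Y}^*}\}\subset \mathbb{L}(\mathbb{X}, \mathbb{Y})^*.
\]
Since $\|T\|=\sup\{y^*(Tx): x\in B_{\mathbb{X}}, y^*\in B_{\mathbb{Y}^*}\}=\sup_{\varphi\in K}\varphi(T)$ and $K$ is symmetric, the bipolar theorem (Hahn--Banach) gives $B_{\mathbb{L}(\mathbb{X}, \mathbb{Y})^*}=\overline{\mathrm{conv}}(K)$. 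In finite dimensions $K$ is compact, being the continuous image of $B_{\mathbb{X}}\times B_{\mathbb{Y}^*}$, so $\mathrm{conv}(K)$ is already closed. By Carath\'eodory's theorem, every point of the dual ball is then a finite convex combination of points of $K$. Also, every functional on $\mathbb{L}(\mathbb{X}, \mathbb{Y})^*$ is evaluation at some operator $T$. This representation is the step that needs the most care; everything else is face-chasing.

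\emph{Inclusion $\supseteq$.} Let $x\in Exp(B_{\mathbb{X}})$ be exposed by $f\in S_{\mathbb{X}^*}$, and let $y^*\in Exp(B_{\mathbb{Y}^*})$ be exposed by some $y\in S_{\mathbb{Y}}$; in finite dimensions every exposing functional on $\mathbb{Y}^*$ comes from $\mathbb{Y}$. Define $T=f(\cdot)\,y$, so $\|T\|=1$ and $[y^*\otimes x](T)=1$, hence $\|y^*\otimes x\|=1$. Let $\varphi$ be in the dual ball with $\varphi(T)=1$, and write $\varphi=\sum_k t_k\, z_k^*\otimes u_k$ as a convex combination with $t_k>0$. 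Then every term satisfies $f(u_k)z_k^*(y)=1$, which forces $f(u_k)=z_k^*(y)=\pm1$ with equal signs. By exposedness, $(u_k,z_k^*)=(x,y^*)$ or $(-x,-y^*)$. In either case $z_k^*\otimes u_k=y^*\otimes x$, so $\varphi=y^*\otimes x$, and $T$ exposes $y^*\otimes x$.

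\emph{Inclusion $\subseteq$.} Let $\varphi$ be an exposed point of the dual ball, exposed by $T$ with $\|T\|=1=\varphi(T)$. Then $\varphi$ is extreme. By Milman's converse to Krein--Milman, $\varphi\in K$, so $\varphi=y^*\otimes x$ with $\|x\|=\|y^*\|=1$. Now set $g:=y^*\circ T\in B_{\mathbb{X}^*}$; then $g(x)=1$. If $g(x')=1$ for some $x'\in B_{\mathbb{X}}$, then $y^*\otimes x'$ also attains $1$ at $T$, so $y^*\otimes x'=y^*\otimes x$, and since $y^*\neq0$ this gives $x'=x$. Thus $g$ exposes $x$. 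Symmetrically, $Tx\in B_{\mathbb{Y}}$ satisfies $y^*(Tx)=1$. If $z^*(Tx)=1$ for some $z^*\in B_{\mathbb{Y}^*}$, then $z^*\otimes x=y^*\otimes x$, hence $z^*=y^*$, so $Tx$ exposes $y^*$. This proves both inclusions.
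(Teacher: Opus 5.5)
The paper gives no proof of this lemma; it is quoted from \cite{RS}. There is therefore no internal argument to compare against, and your proposal is a correct, self-contained proof.

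Two steps you leave implicit are one-line checks. First, $\|x\|=\|y^*\|=1$ follows from $1=\varphi(T)=y^*(Tx)\le\|y^*\|\,\|x\|\le 1$. Second, the injectivity of $x'\mapsto y^*\otimes x'$ (for $y^*\neq 0$) and of $z^*\mapsto z^*\otimes x$ (for $x\neq 0$) follows by evaluating on rank-one operators $h(\cdot)\,y_0$.

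The advantage of your route is that it is entirely elementary. Its cost is that it is intrinsically finite-dimensional, because it rests on three facts that fail in general:
\begin{itemize}
\item $K$ is norm compact, so $B_{\mathbb{L}(\mathbb{X},\mathbb{Y})^*}=\mathrm{conv}(K)$ with finite Carath\'eodory representations;
\item every exposing functional of the dual ball is evaluation at an operator;
\item every exposing functional of $B_{\mathbb{Y}^*}$ comes from $\mathbb{Y}$.
\end{itemize}
In infinite dimensions the dual ball is only the weak$^*$-closed convex hull of $K$. Milman's theorem then only places extreme points in $\overline{K}^{w^*}$, which is in general larger than $K$, and exposed and weak$^*$-exposed points must be distinguished. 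That is where heavier operator-space theory is needed.

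For this paper nothing more is required. The lemma is invoked only in the proof of Theorem~\ref{theorem:same number}, and only through the inclusion $\supseteq$. That is the easier half of your argument: it uses just $B_{\mathbb{L}(\mathbb{X},\mathbb{Y})^*}=\mathrm{conv}(K)$ and the exposedness of $x$ and $y^*$, and does not need Milman's theorem.
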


\begin{theorem}\label{theorem:same number}
	Let $ \mathbb{X}$ be an $m$-dimensional strictly convex Banach space and let $\mathbb{Y}$ be an $n$-dimensional smooth Banach space. Then $S_{\mathbb{L}(\mathbb{X}, \mathbb{Y})}$ has a ball covering consisting of $mn+1$ balls. 
    \end{theorem}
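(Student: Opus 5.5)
The plan is to apply Theorem~\ref{theorem:sufficient condition} to the $mn$-dimensional space $\mathbb{L}(\mathbb{X}, \mathbb{Y})$ with $\mathcal{A} = S_{\mathbb{L}(\mathbb{X}, \mathbb{Y})}$. This requires $mn+1$ exposed points $F_1, \ldots, F_{mn+1}$ of the dual unit ball such that $F_1, \ldots, F_{mn}$ span $\mathbb{L}(\mathbb{X}, \mathbb{Y})^*$ and $F_{mn+1} \in cone(-F_1, \ldots, -F_{mn})$. By Lemma~\ref{exposed:dual}, the exposed points are exactly the functionals $y^* \otimes x$ with $x \in Exp(B_{\mathbb{X}})$ and $y^* \in Exp(B_{\mathbb{Y}^*})$. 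Since $\mathbb{X}$ is strictly convex, every point of $S_{\mathbb{X}}$ is exposed. Since $\mathbb{Y}$ is finite-dimensional and smooth, $\mathbb{Y}^*$ is strictly convex, so every point of $S_{\mathbb{Y}^*}$ is exposed. Hence every rank-one tensor $y^* \otimes x$ with $\|x\| = \|y^*\| = 1$ is an exposed point of $B_{\mathbb{L}(\mathbb{X}, \mathbb{Y})^*}$.

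The construction proceeds in three steps.

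\textbf{Spanning family.} Choose a basis $x_1, \ldots, x_m \in S_{\mathbb{X}}$ and a basis $y_1^*, \ldots, y_n^* \in S_{\mathbb{Y}^*}$, and set $F_{ij} = y_j^* \otimes x_i$. These $mn$ functionals are linearly independent in $\mathbb{L}(\mathbb{X}, \mathbb{Y})^* \cong \mathbb{X} \otimes \mathbb{Y}^*$, which has dimension $mn$, so they span it. This gives condition $(i)$ of Theorem~\ref{theorem:sufficient condition}.

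\textbf{Extra functional.} The difficulty is that $-\sum_{i,j} F_{ij} = -(\sum_i x_i) \otimes (\sum_j y_j^*)$ is an arbitrary element of the dual, and after normalising by its norm it need not be exposed. What saves the construction is that this particular sum is itself a rank-one tensor. Write $u = \sum_i x_i$ and $v^* = \sum_j y_j^*$; both are nonzero by linear independence. Set
\[
F_{mn+1} = -\frac{v^*}{\|v^*\|} \otimes \frac{u}{\|u\|}.
\]
This is a rank-one tensor of unit vectors, hence exposed by the observation above. It equals a positive multiple of $\sum_{i,j}(-F_{ij})$, so it lies in $cone(-F_{11}, \ldots, -F_{mn})$. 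This gives condition $(ii)$.

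\textbf{Conclusion.} Theorem~\ref{theorem:sufficient condition} now yields a covering of $S_{\mathbb{L}(\mathbb{X}, \mathbb{Y})}$ by $mn+1$ balls. Cheng's lower bound $\dim + 1$ then shows that $mn+1$ is exactly the minimal ball covering number.

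The main obstacle I expect is justifying that rank-one tensors of unit vectors are genuinely exposed points of the dual ball of the operator norm. This must be read off Lemma~\ref{exposed:dual}, using that exposedness in $B_{\mathbb{X}}$ and $B_{\mathbb{Y}^*}$ is automatic under strict convexity. The notion of \emph{exposed} must match the one used in Theorem~\ref{theorem:sufficient condition}. The only fact needed about the dual norm is that $\|y^* \otimes x\| = 1$, which is clear from $\sup_{\|T\| \le 1} y^*(Tx) = 1$.
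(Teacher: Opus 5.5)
Your proposal is correct and is essentially the paper's own argument. The paper also takes unit-norm bases $x_1,\ldots,x_m$ of $\mathbb{X}$ and $y_1^*,\ldots,y_n^*$ of $\mathbb{Y}^*$. It uses Lemma~\ref{exposed:dual}, with strict convexity of $\mathbb{X}$ and smoothness of $\mathbb{Y}$, to see that every $y_j^*\otimes x_i$ and the extra functional $y_0^*\otimes x_0$ (which is exactly your $F_{mn+1}$) are exposed, and it notes that this functional is a positive multiple of $-\sum_{i,j}y_j^*\otimes x_i$ before applying Theorem~\ref{theorem:sufficient condition}.
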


\begin{proof}
	Let $x_1, x_2, \ldots, x_m\in S_{\mathbb{X}}$ be linearly independent and let $y_1^*, y_2^*, \ldots, y_n^* \in S_{\mathbb{Y}^*}$ be  linearly independent. Suppose $x_0 =  - \frac{1}{\|\sum_{i=1}^{m} x_i\|} \sum_{i=1}^m x_i$ and $y_0^* =   \frac{1}{\| \sum_{i=1}^{n} y_i\|} \sum_{i=1}^n y_i^*.$ Since $\mathbb{X}$ is strictly-convex and $\mathbb{Y}$ is smooth, $x_0, x_1, \ldots, x_m \in Exp(B_{\mathbb{X}})$ and $y_0^*, y_1^*, \ldots, y_n^* \in Exp(B_{\mathbb{Y}^*}).$ Clearly, $ \{ y_j^* \otimes x_i: 1 \leq i \leq m, 1 \leq j \leq n \}$ is a linearly independent set in $S_{(\mathbb{L}(\mathbb{X}, \mathbb{Y}))^*}.$ Following Lemma \ref{exposed:dual}, $$\{y_0^* \otimes x_0,  y_j^* \otimes x_i: 1 \leq i \leq m, 1 \leq j \leq n\} \subset  Exp (B_{(\mathbb{L}(\mathbb{X}, \mathbb{Y}))^*}).$$ Observe that $$y_0^* \otimes x_0 =  - \frac{1}{ \|\sum_{i=1}^{m} x_i\| \|\sum_{i=1}^{n} y_i^* \|} \sum_{i=1}^{m} \sum_{j=1}^{n} y_j^* \otimes x_i.  $$
 Therefore, using Theorem \ref{theorem:sufficient condition} we obtain the required results.
	\end{proof}
    
In light of the above result, the following two remarks deserve special attention.

   \begin{remark}
  Theorem \ref{theorem:same number} essentially shows that  $\mathcal{B}_{\min}^{\#}(\mathbb{L}(\mathbb{X},\mathbb{Y})) = mn+1, $ whenever $ \mathbb{X}$ is an $m$-dimensional strictly convex Banach space and $\mathbb{Y}$ is an $n$-dimensional smooth Banach space. Indeed, it follows from \cite{C} that $\mathcal{B}_{\min}^{\#}(\mathbb{L}(\mathbb{X},\mathbb{Y})) \geq mn+1, $ which combined with Theorem \ref{theorem:same number} finishes the proof.
    \end{remark}

    \begin{remark}
        The BCP is clearly an isometric property, which is evident from the fact that Banach spaces over the same field and having the same dimension may very well have different minimum ball covering number. In this context, Theorem \ref{theorem:same number} seems particularly instructive. As an immediate consequence of Theorem \ref{theorem:same number}, we may and do conclude that the isometric aspects governing the minimal ball covering number of $ \mathbb{L}(\mathbb{X},\mathbb{Y}) $ are completely governed by the strict convexity of $ \mathbb{X} $ and the smoothness of $ \mathbb{Y}. $ Furthermore, it is worth remembering that there exist strictly convex Banach spaces $ \mathbb{X}, \widetilde{\mathbb{X}} $ having dimension $ m $ and smooth Banach spaces $ \mathbb{Y}, \widetilde{\mathbb{Y}} $ such that $ \mathbb{L}(\mathbb{X}, \mathbb{Y}) $ is \emph{not} isometrically isomorphic to $ \mathbb{L}(\widetilde{\mathbb{X}}, \widetilde{\mathbb{Y}}). $ Nevertheless, Theorem \ref{theorem:same number} ensures that 
        \[ \mathcal{B}_{\min}^{\#}(\mathbb{L}(\mathbb{X},\mathbb{Y})) = \mathcal{B}_{\min}^{\#}(\mathbb{L}(\widetilde{\mathbb{X}},\widetilde{\mathbb{Y}})), \]
        in all such cases.

    \end{remark}

 So far from our discussion, it is clear that the Birkhoff-James orthogonality plays an essential role in determining the  ball covering number of a finite-dimensional Banach spaces. Motivated by this elementary fact, we characterize the symmetric ball covering number of a finite-dimensional Banach space in terms of Birkhoff–James orthogonality. A ball covering $ \{\mathbf{B_i}\}_{i \in \lambda} $ is called symmetric if $\{\mathbf{B_i}\}_{i \in \lambda} = \{\mathbf{-B_i}\}_{i \in \lambda}.$
  We refer to \cite{C} for further studies in this direction. Beginning with a general compact subset of a Banach space, we will subsequently specialize to the unit sphere. We skip the proof of the next result, as, by arguments similar to Lemma \ref{lemma:orthogonality} it can be easily verified.

\begin{prop}\label{proposition:symmetric ball covering}

    Let $ \mathbb{X} $ be a finite-dimensional Banach space. Let $ \mathcal{A} \subset \mathbb{X} \setminus \{0\} $ be a compact set such that for any non zero $ x \in \mathbb{X}, $ there exists $ r_x >0 $ such that $ r_x x \in \mathcal{A}. $ Then the following are equivalent:
	\begin{itemize}
		\item[$(i)$] $ \mathcal{A} $ admits a symmetric ball covering consisting of $ 2m $ balls,
		
		\item[$(ii)$]  There exist $ x_1, x_2,\ldots, x_m \in \mathbb{X} \setminus \{0\} $ such that $ \cap_{i=1}^m x_i^{\perp} = \{0\}. $
	\end{itemize}
    
\end{prop}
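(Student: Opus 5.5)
The plan is to mirror the proof of Lemma \ref{lemma:orthogonality}, with the two parts $x^-$ and $x^+$ replaced by the full orthogonality set $x^\perp = x^+ \cap x^-$. The key observation is that a symmetric pair of balls $B(\lambda x, r)$ and $B(-\lambda x, r)$ with $0<r\le \lambda\|x\|$ is exactly what detects $y \notin x^-$ or $y \notin x^+$, and hence $y \notin x^\perp$.

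\emph{$(i)\implies(ii)$.} Let $\{\mathbf{B_i}\}$ be a symmetric covering of $\mathcal{A}$ by $2m$ balls. First I dispose of a degenerate possibility: a ball $B(c,r)$ with $r \le \|c\|$ cannot coincide with its negative $B(-c,r)$ unless $c=0$. So the covering splits into $m$ pairs $\{B(c_i,r_i), B(-c_i,r_i)\}$ with $c_i\neq 0$. Set $x_i=c_i$. Take $y\in\mathcal{A}$, and suppose $y \in B(c_j,r_j)$ (the case of $B(-c_j,r_j)$ is symmetric). Then $\|c_j - y\| < r_j \le \|c_j\|$, so $y \notin c_j^-$, and in particular $y \notin c_j^\perp$. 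Hence $\mathcal{A}\cap \bigcap_i x_i^\perp = \emptyset$. Because $\bigcap_i x_i^\perp$ is closed under scalar multiplication, and every nonzero vector has a positive multiple in $\mathcal{A}$, we conclude $\bigcap_i x_i^\perp = \{0\}$.

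\emph{$(ii)\implies(i)$.} Take $y\in\mathcal{A}$. Some $x_j$ satisfies $y\notin x_j^\perp$, so $y\notin x_j^-$ or $y\notin x_j^+ = (-x_j)^-$. As in the step $(iii)\implies(i)$ of Lemma \ref{lemma:orthogonality}, this gives $t_y>0$ and either $y\in B(t_y x_j, r_y)$ or $y \in B(-t_y x_j, r_y)$ with $r_y \le t_y\|x_j\|$. Now form the open cover of $\mathcal{A}$ by all such balls together with their negatives. By compactness, a finite subcover indexed by the centres $\pm t_p x_{j_p}$ remains.

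\emph{Main obstacle.} The step requiring care is collapsing this finite subcover to exactly $2m$ balls while keeping symmetry. The plan is as follows:
\begin{itemize}
\item Group the balls according to the vector $x_j$ and the sign of the centre.
\item For the group with centres $+t x_j$, apply Proposition \ref{prop:compact} with $\lambda_j := \max$ of the relevant $t$'s, replacing the group by a single ball $B(\lambda_j x_j, r_j)$ with $r_j \le \lambda_j\|x_j\|$.
\item For the group with centres $-t x_j$, apply the same proposition with the same $\lambda_j$ and direction $-x_j$. Both ball families live on the same ray family, so we may enlarge the radii to a common value $\max$ of the two radii. This is still $\le \lambda_j\|x_j\|$, so the ball still avoids the origin, and enlarging only increases coverage.
\end{itemize}
This yields the symmetric pairs $B(\pm\lambda_j x_j, r_j)$, $1\le j\le m$, so $\mathcal{A}$ admits a symmetric covering by $2m$ balls. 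If some $x_j$ is never used, I would simply include an arbitrary valid pair $B(\pm x_j, \|x_j\|/2)$, so that the count is exactly $2m$.
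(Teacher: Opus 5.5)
Your argument is correct and follows the route the paper indicates; the paper omits this proof, saying only that it goes as in Lemma~\ref{lemma:orthogonality}. You also supply the two adaptations that route needs. First, a symmetric covering splits into $m$ pairs $B(\pm c_i,r_i)$, since no admissible ball equals its negative, and this combines with $x^\perp=x^-\cap(-x)^-$. Second, symmetry survives the merging step via Proposition~\ref{prop:compact} provided you use, for each $j$, one common $\lambda_j$ (the maximum of the $t$'s over both sign groups) and one common radius.
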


 As a corollary to the above, we now give a necessary and sufficient condition for a symmetric ball covering of the unit sphere of any Banach space.  

\begin{cor}
	Let $ \mathbb{X} $ be a finite-dimensional Banach space. Then the following are equivalent:
	
		\begin{itemize}
		\item[$(i)$] $ S_\mathbb{X} $ admits a symmetric ball covering consisting of $ 2m $ balls,
		
		\item[$(ii)$] There exists $ x_1, x_2,\ldots, x_m \in \mathbb{X} \setminus \{0\} $ such that $ \cap_{i=1}^m x_i^{\perp} = \{0\}. $
	\end{itemize}
\end{cor}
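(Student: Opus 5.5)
The corollary follows by specializing Proposition~\ref{proposition:symmetric ball covering} to $\mathcal{A}=S_{\mathbb{X}}$. I would therefore first check that $S_{\mathbb{X}}$ meets the hypotheses of that proposition.

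\emph{Compactness.} Since $\mathbb{X}$ is finite-dimensional, $S_{\mathbb{X}}$ is closed and bounded, hence compact.

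\emph{Avoiding the origin.} Clearly $0\notin S_{\mathbb{X}}$, so $S_{\mathbb{X}}\subset \mathbb{X}\setminus\{0\}$.

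\emph{Absorbing condition.} For every non-zero $x\in\mathbb{X}$, take $r_x=\frac{1}{\|x\|}>0$; then $r_x x\in S_{\mathbb{X}}$.

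With these hypotheses in place, Proposition~\ref{proposition:symmetric ball covering} applied to $\mathcal{A}=S_{\mathbb{X}}$ gives the equivalence of $(i)$ and $(ii)$ directly.

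Because the paper omits the proof of Proposition~\ref{proposition:symmetric ball covering}, I would also indicate how it goes, following Lemma~\ref{lemma:orthogonality}.

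\emph{$(ii)\Rightarrow(i)$.} Let $y\in\mathcal{A}$. Since $\cap_{i=1}^m x_i^\perp=\{0\}$, there is some $k$ with $y\notin x_k^\perp$. Then either $y\notin x_k^-$ or $y\notin x_k^+=(-x_k)^-$. As in the proof of $(iii)\Rightarrow(i)$ of Lemma~\ref{lemma:orthogonality}, $y$ then lies in a ball $B(t x_k,r)$ or $B(-t x_k,r)$ with $0<r\le t\|x_k\|$. By compactness of $\mathcal{A}$, finitely many such balls cover it. Proposition~\ref{prop:compact} then merges all balls centred on the same ray into a single ball. This leaves one ball centred at $\lambda_k x_k$ and one centred at $-\mu_k x_k$ for each $k$, at most $2m$ balls in total.

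\emph{Symmetrizing.} To get a symmetric family, replace $\lambda_k$ and $\mu_k$ by $\max\{\lambda_k,\mu_k\}$, which is allowed by Proposition~\ref{prop:compact}. Then use the symmetric radius, enlarging the smaller radius to the larger one while keeping it at most the norm of the centre. Any missing balls can be added as harmless duplicates.

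\emph{$(i)\Rightarrow(ii)$.} Group the $2m$ balls into $m$ antipodal pairs $B(\pm z_i,r_i)$. If $y\in\cap_{i=1}^m z_i^\perp$ is non-zero, rescale $y$ into $\mathcal{A}$. It must lie in some ball $B(\pm z_i,r_i)$, which gives $\|\pm z_i - y\|<\|z_i\|$ and contradicts $z_i\perp_B y$.

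The only delicate point is the symmetrization of the radii. After applying Proposition~\ref{prop:compact} this is routine, so I expect no real obstacle.
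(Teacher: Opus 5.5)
Your proposal is correct and follows the paper's route: the corollary is Proposition~\ref{proposition:symmetric ball covering} specialized to $\mathcal{A}=S_{\mathbb{X}}$, whose hypotheses you verify correctly, and your sketch of that (unproved) proposition is the adaptation of Lemma~\ref{lemma:orthogonality} the paper has in mind. The symmetrization is even easier than you suggest: the balls $B(tx_k,t\|x_k\|)$ increase with $t>0$, so for $T$ large the $2m$ balls $B(\pm T x_k, T\|x_k\|)$ already form a symmetric cover.
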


 It is well known that the BCP of a Banach space is not hereditary. Cheng \cite{CCL} showed that $\ell_{\infty} $ has the BCP, but it contains a closed subspace which does not have this property. Keeping that in mind we would like to ask a similar question in the finite-dimensional setting. Observe that if we take $\mathbb{X}=\ell_1^3$ and
\[
\mathbb{Y}:=\{(x,y,0): x,y\in\mathbb{R}\},
\]
then it follows from Corollary~\ref{1 norm} that the ball covering number of $\mathbb{X}$ is $4$. Also, the ball covering number of $\mathbb{Y}$ is equal to $4$. This naturally leads to the following question.

\begin{question}
Does there exists a finite-dimensional Banach space $\mathbb{X}$ and a subspace $\mathbb{Y}$ of $\mathbb{X}$ such that the ball covering number of $\mathbb{Y}$ is strictly greater than the ball covering number of $\mathbb{X}$?
  \end{question}
  
 Although we do not have a complete answer to this question at present, nor any explicit counterexample, we will end our work with the following sufficient condition under which a subspace admits a minimal ball covering with the same number of balls as its ambient space.

\begin{prop}	
	Let $\mathbb{X}$ be a Banach space such that $S_{\mathbb{X}}$ admits a ball covering consisting of $m$ balls. Let  $\mathbb{Y}$ be a subspace of $\mathbb{X}$ such that $\mathbb{Y}$ intersects the interior of every maximal face of $B_{\mathbb{X}}$ containing a smooth point. Then $S_{\mathbb{Y}}$ admits a ball covering consisting of $m$ balls.
\end{prop}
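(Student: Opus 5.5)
We work throughout in the finite-dimensional setting of Section-II, which is where Lemma \ref{lemma:orthogonality} applies; since the statement is about $m$ balls, $\mathbb{X}$ (and hence $\mathbb{Y}$) is finite-dimensional, and $S_{\mathbb{Y}}$ satisfies the compactness and absorbing hypotheses of that lemma inside $\mathbb{Y}$. The plan is to transfer the orthogonality characterization from $\mathbb{X}$ to $\mathbb{Y}$.

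Since $S_{\mathbb{X}}$ admits a covering by $m$ balls, the implication $(i)\Rightarrow(ii)$ of Lemma \ref{lemma:orthogonality} gives smooth points $x_1,\ldots,x_m \in Sm(\mathbb{X})$ with $\bigcap_{i=1}^m x_i^- = \{0\}$. We normalize them to lie in $S_{\mathbb{X}}$. Let $J(x_i)=\{f_i\}$. By Lemma \ref{functional}, $x_i^- = \{z\in\mathbb{X}: f_i(z)\le 0\}$.

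Consider $F_i := \{z\in B_{\mathbb{X}} : f_i(z)=1\}$. This is the unique maximal face of $B_{\mathbb{X}}$ containing $x_i$: every face containing a smooth point is contained in the exposed face determined by its unique support functional. By hypothesis, $\mathbb{Y}$ meets the relative interior of $F_i$, so we may choose $y_i \in \mathbb{Y}\cap \operatorname{ri} F_i$; in particular $y_i \in S_{\mathbb{Y}}$ and $f_i(y_i)=1$.

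The key claim is that every supporting functional of $y_i$ in $\mathbb{X}$ equals $f_i$. Indeed, if $g\in J(y_i)$, then $\{g=1\}\cap B_{\mathbb{X}}$ is a face containing the relative interior point $y_i$ of $F_i$, hence it contains all of $F_i$, including $x_i$. Thus $g(x_i)=1$, and smoothness of $x_i$ forces $g=f_i$. We then compute the negative cone of $y_i$ inside $\mathbb{Y}$. Supporting functionals of $y_i$ in $\mathbb{Y}^*$ are restrictions of norm-one extensions, which lie in $J_{\mathbb{X}}(y_i)=\{f_i\}$ by Hahn--Banach. Hence $J_{\mathbb{Y}}(y_i)=\{f_i|_{\mathbb{Y}}\}$, and Lemma \ref{functional} applied in $\mathbb{Y}$ gives
\[
y_i^{-}\ (\text{in } \mathbb{Y}) = \{z\in\mathbb{Y}: f_i(z)\le 0\} = x_i^-\cap \mathbb{Y}.
\]
Intersecting over $i$ yields $\bigcap_{i=1}^m y_i^- \subset \bigcap_{i=1}^m x_i^- \cap \mathbb{Y} = \{0\}$. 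The implication $(iii)\Rightarrow(i)$ of Lemma \ref{lemma:orthogonality}, applied in $\mathbb{Y}$ with $\mathcal{A}=S_{\mathbb{Y}}$, then produces a covering of $S_{\mathbb{Y}}$ by $m$ balls.

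The main obstacle is the key claim about the face structure. It requires interpreting ``interior'' as relative interior of the face, using the fact that a face meeting the relative interior of a convex set contains that whole set, and justifying that the maximal face through $x_i$ is exactly $F_i$. This last point needs care because the hypothesis is phrased in terms of maximal faces rather than exposed faces. Our plan is to argue that any face through $x_i$ lies in some supporting hyperplane through $x_i$, which must be $\{f_i=1\}$, so $F_i$ is the unique maximal face containing $x_i$.
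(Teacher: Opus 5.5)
Your proposal is correct and follows essentially the same route as the paper. Both take smooth $x_i$ with $\bigcap_{i=1}^m x_i^-=\{0\}$ from Lemma~\ref{lemma:orthogonality}, pick $y_i\in\mathbb{Y}$ in the relative interior of the maximal face through $x_i$, use the face argument to get $J(y_i)=J(x_i)$ (so the negative cone of $y_i$ in $\mathbb{Y}$ is $x_i^-\cap\mathbb{Y}$), and apply the lemma in $\mathbb{Y}$. Along the way you correctly make explicit three points the paper leaves tacit: finitely many balls force $\dim\mathbb{X}<\infty$ (a unit vector in the common kernel of supporting functionals at the centers would be uncovered), the maximal face through $x_i$ is exactly $\{f_i=1\}\cap B_{\mathbb{X}}$, and the cone computed in $\mathbb{Y}$ is the trace of the one computed in $\mathbb{X}$.
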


\begin{proof}	
	Since $S_{\mathbb{X}}$ admits a ball covering by $m$ balls, it follows from Lemma \ref{lemma:orthogonality} that there exists a  set $\{x_i: 1 \leq i \leq m \} \subset Sm_\mathbb{X}$ such that $\cap_{i =1}^{m} x_i^- = \{0\}.$
	Suppose that $x_i \in F_i,$ where $F_i$ is a maximal face of $B_{\mathbb{X}}.$ From the hypothesis, $\mathbb{Y} \cap int(F_i) \neq \emptyset. $ Let $y_i \in \mathbb{Y} \cap int(F_i).$ Since $y_i \in int(F_i),$ it is easy to observe that for any $f_{y_i} \in J(y_i),$ $f_{y_i}(z)=1, \forall z \in F_i.$ As $x_i \in F_i, $ $f_{y_i} \in J(x_i).$ 
	Since $F_i \cap Sm(\mathbb{X}) \neq \emptyset$ and $y_i \in int(F_i),$ it is also easy to show that $y_i \in Sm(\mathbb{X}).$ Clearly, $J(y_i)=\{f_{y_i}\}= J(x_i).$ So, $y_i^{-}= x_i^{-}$ and therefore,  $\cap_{i =1}^{m} y_i^- = \{0\}.$ Following Lemma \ref{lemma:orthogonality}, $S_{\mathbb{Y}}$ has a ball covering by $m$ balls.
\end{proof}


\end{document}